\newcommand{\ra}{\rightarrow}
    \newcommand{\BA}{{\mathbb {A}}} 
    \newcommand{\BC}{{\mathbb {C}}} 
     \newcommand{\BF}{{\mathbb {F}}}
    \newcommand{\BG}{{\mathbb {G}}}
     \newcommand{\BR}{{\mathbb {R}}}
    \newcommand{\BW}{{\mathbb {W}}} 
     \newcommand{\BZ}{{\mathbb {Z}}}
    \newcommand{\CA}{{\mathcal {A}}} 
    \newcommand{\CC}{{\mathcal {C}}} 
    \newcommand{\CG}{{\mathcal {G}}}
    \newcommand{\CS}{{\mathcal {S}}}
     \newcommand{\Aut}{{\mathrm{Aut}}}
    \renewcommand{\Re}{{\mathrm{Re}}} 
    \newcommand{\Res}{{\mathrm{Res}}}
     \newcommand{\tr}{{\mathrm{tr}}}
    \newcommand{\vol}{{\mathrm{vol}}}
    \newcommand{\wt}{\widetilde} 
    \newcommand{\pair}[1]{\langle {#1} \rangle}
    \newcommand{\bs}{\backslash}
    \theoremstyle{plain}
    \newtheorem{thm}{Theorem}[section] \newtheorem{cor}[thm]{Corollary}
    \newtheorem{lem}[thm]{Lemma}  \newtheorem{prop}[thm]{Proposition}
    \newcommand{\Her}{\mathrm{Her}}
    \newcommand{\diag}{\mathrm{diag}}
    \newcommand{\dif}{\mathrm{d}}
  \numberwithin{equation}{section}
\begin{document}

\title[Siegel-Weil formula over function fields]{On the Siegel-Weil formula for classical groups over function fields}
\author{Wei Xiong}
\address{School of Mathematics, Hunan University,
Changsha 410082, China}
\email{weixiong@amss.ac.cn}
\thanks{}

\subjclass[2010]{Primary 11F27, 11F55; Secondary 11E57, 11F70}

\date{\today}

\dedicatory{Dedicated to the memory of my mother}

\keywords{Siegel-Weil formula, classical groups, function fields, theta integral, Siegel Eisenstein series, reduction theory}

\begin{abstract}
We establish a Siegel-Weil formula for classical groups over a function field with odd characteristic, which asserts in many cases that the Siegel Eisenstein series is equal to an integral of a theta function. This is a function-field analogue of the classical result proved by A. Weil in his 1965 Acta Math. paper. We also give a convergence criterion for the theta integral by using Harder's reduction theory over function fields.
\end{abstract}

\maketitle

\tableofcontents

\section*{Introduction}
Let $k$ be a global function field with $\mathrm{char}(k)\neq 2$, and let $D$ be a quadratic field extension of $k$. Let $G=U(n, n)$ be the quasi-split unitary group in $2n$ variables over $D$, and let $H=U(V)$ be the unitary group associated to a non-degenerate hermitian space $V$ over $D$ of dimension $m$ and of Witt index $r$. In this paper, we study the Siegel-Weil formula for the pair $(G, H)$. Let $\BA$ be the ring of adeles of $k$, and let $X=V^n\cong M_{m\times n}(D)$. With some extra data, there is Weil representation $\omega$ of $G(\BA)$ on the space $\CS(X(\BA))$ of Schwartz-Bruhat functions on $X(\BA)$.

For $\Phi \in \CS(X(\BA))$, define the theta integral
\[
I(\Phi)=\int_{H(\BA)/H(k)} \sum_{x\in X(k)}\Phi(hx)~\dif h,
\]
where $\dif h$ is the Haar measure on $H(\BA)$ such that $\vol(H(\BA)/H(k))=1$;
and define the Siegel Eisenstein series
\[
E(\Phi)=\sum_{\gamma \in P(k)\bs G(k)}\omega(\gamma)\Phi(0),
\]
where $P$ is the Siegel parabolic subgroup of $G$.

In this paper, we will establish a convergence criterion for the theta integral and prove a Siegel-Weil formula. More precisely, we will prove that the theta integral $I(\Phi)$ is absolutely convergent for any $\Phi \in \CS(X(\BA))$ whenever $r=0$ or $m-r>n$, and that the Siegel Eisenstein series $E(\Phi)$ is absolutely convergent for any $\Phi \in \CS(X(\BA))$ whenever $m>2n$; moreover, we will prove the following {\sl Siegel-Weil formula}:
\[
\text{If $m>2n$, then $I(\Phi)=E(\Phi)$ for any $\Phi \in \CS(X(\BA))$.}
\]
These results are analogues of those in \cite{Weil 1965}. We will also consider the cases where $D$ is a division algebra over $k$ whose center is $k$ or a quadratic field extension of $k$, and we will obtain similar results.  We  follow the approach in \cite{Weil 1965} to prove these results; in particular, we will make use of the reduction theory over function fields established in \cite{Harder 1969} and the theory of Eisenstein series over function fields in \cite{Morris 1982}. For the convergence criterion for the theta integral, we here take Weil's approach in \cite{Weil 1965}. Note that for the  Siegel-Weil formula in this case, besides Weil's approach in \cite{Weil 1965}, other approaches are applicable, see for example \cite{Kudla-Rallis 1988b} and \cite{Urt}.

The modern study of the Siegel-Weil formulas over number fields begins with the fundamental works of Weil in 1960s, and after the seminal works of Kudla and Rallis in 1980s and 1990s, it has been well developed in recent years, with a major breakthrough made by Gan, Qiu, and Takeda in early 2010s. For this development, see for example \cite{Weil 1964}, \cite{Rallis 1987}, \cite{Kudla-Rallis 1988a}, \cite{Kudla-Rallis 1988b}, \cite{Kudla-Rallis 1994}, \cite{Ikeda 1996}, \cite{Moe97}, \cite{Tan 1998}, \cite{Ichino 2001}, \cite{Ichino 2004}, \cite{Ichino 2007}, \cite{Jiang-Soudry 2007}, \cite{Urt}, \cite{Gan-Takeda 2011}, \cite{Yamana 2011}, \cite{Xiong 2013}, \cite{Yamana 2013a}, \cite{Yamana 2013b}, and especially \cite{Gan-Qiu-Takeda 2014} and the references cited therein.

On the other hand, the Siegel-Weil formulas over function fields are only studied in some special cases to date. The first result on Siegel-Weil formulas over function fields is Haris' pioneering work in 1974 (\cite{Haris 1974}). Thereafter only little relevant research has been done. Recently, F.-T. Wei (\cite{Wei 2015}) proved a Siegel-Weil formula for anisotropic quadratic forms over function fields.

In this paper, we consider the situation similar to that in \cite{Weil 1965}, and we follow closely the strategy in \cite{Weil 1965} to prove the Siegel-Weil formula. In particular, many auxiliary results are proved following the methods in \cite{Weil 1965} with some necessary changes from number-field situation to function-field situation.

It is worth mentioning that Weil already had the function-field case in mind, and he had proved many preliminary results in \cite{Weil 1964} and \cite{Weil 1965}. But due to the lack of the reduction theory over function fields right then, he only considered the Siegel-Weil formula over number fields. Fortunately for our current purpose, the reduction theory for reductive algebraic groups over function fields has been established by Harder in late 1960s (see \cite{Harder 1969} or \cite{Springer 1994}). We emphasize that the reduction theory for reductive algebraic groups over function fields has played an essential role in establishing the convergence criteria for the theta integral and in the proof of the Siegel-Weil formula.

We give an overview of this paper in the following, specializing in the case where $D$ is a quadratic field extension of $k$, $G=U(n, n)$ is the isometry group associated to a split skew-hermitian space of dimension $2n$ over $D$, and $H=U(V)$ is the isometry group associated to a non-degenerate hermitian space $V$ over $D$ of dimension $m$ and of Witt index $r$. The hermitian form on $V$ is denoted by $(~,~): V \times V \ra D$. We allow $n=0$, which then means that $G=\{1\}$ is the trivial one-element group.

In Section \ref{Section 2}, we will give various notation and conventions used in this paper. We regard the function field $k$ as a finite extension of the rational function field $\BF_q(x)$ over the finite field $\BF_q$ with $q$ elements. We will introduce various algebraic groups over $k$. There is the Siegel parabolic subgroup $P=NM$ of $G$, where $N\cong \Her_n$ is the unipotent radical and $M\cong \Res_{D/k}GL_n$ is the Levi subgroup, here $\Her_n(k)$ is the space of hermitian matrices of order $n$ over $D$. Let $\BA$ be the ring of adeles of $k$. We have an Iwasawa decomposition $G(\BA)=P(\BA)G(O_{\BA})=N(\BA)M(\BA)G(O_{\BA})$, where $O_{\BA}=\prod_v O_v$ is the set of integral adeles with $O_v$ being the ring of integers of the local field $k_v$ for any place $v$ of $k$. Thus we can write any element $g\in G(\BA)$ in the form $g=n(b)m(a)g_1$ with $n(b)\in N(\BA)$, $m(a)\in M(\BA)$ with $a\in GL_n(D_{\BA})$ and $g_1\in G(O_{\BA})$, and we let $|a(g)|=|\det(a)|_{D_{\BA}}$. We let $X=V^n=\{(x_1, \ldots, x_n): x_i\in V\}$, and define a mapping $i_X: X \ra \Her_n$ by $i_X(x)=((x_i, x_j)_V)$ for $x=(x_1, \ldots, x_n)\in X$. For an integer $r$ with $0\leq r \leq n$, define a subspace $X_r$ of $X$ by $X_r=\{(x_1, \ldots, x_r, 0, \ldots, 0): x_i\in V\}$. Then $X_r\cong V^r$ and $X_n=X$. Let $\psi$ be a non-trivial character of $\BA/k$ and let $\chi$ be a Hecke character of $D$ satisfying $\chi|_{\BA^{\times}}=\epsilon_{D/k}^m$, where $\epsilon_{D/k}$ is the quadratic Hecke character of $k$ associated to the quadratic extension $D/k$; then there is an associated Weil representation $\omega=\omega_{\psi, \chi}$ of the adelic group $G(\BA)$ on the space $\CS(X(\BA))$ of Schwartz-Bruhat functions on $X(\BA)$.

In Section \ref{Section 3}, we will formulate some results in reduction theory over function fields  and prove some auxiliary lemmas (i.e. Lemmas \ref{Lem 4}, \ref{Lem 5}, \ref{Lem 6}). These lemmas are analogues of the results in \cite[n. 11--13]{Weil 1965}, and we will follow Weil's method to prove them. In particular, we will make use of two big theorems in reduction theory, i.e. the compactness theorem and the fundamental set theorem. The results in this section are essential for what follows and will allow us to follow Weil's strategy to prove the Siegel-Weil formula over function fields. We choose a place $v_0$ of the function field $k$ which lies above the place $(x^{-1})$ of the rational function field $\BF_q(x)$ and such that the residue field of $k$ at $v_0$ is $\BF_q$. The place $v_0$ is an analogue of the real place in the number field case and will play the similar role. We choose and fix a uniformizer $\varpi$ of $k_{v_0}$. Let $T\cong (\BG_m)^r$ be a maximal split torus in $H$, where $\BG_m=GL_1$ and  $r$ is the Witt index of $V$. We define a subgroup $\Theta(T)\cong \BZ^r$ of $T(\BA)$ by  $\Theta(T)=\{(a_{\tau_1}, \ldots, a_{\tau_r}): \tau_i \in \BZ\}$ via the isomorphism $(\BG_m)^r \cong T$, where $a_{\tau}$ is the idele whose component at $v_0$ is $\varpi^{\tau}$ and the other components are all equal to 1. Let $P_0$ be a minimal $k$-parabolic subgroup of $H$ which contains $T$, and let $P_0(\BA)^1$ be the subgroup of $P_0(\BA)$ formed of elements $p$ such that $|\lambda(p)|_{\BA}=1$ for any $k$-rational character $\lambda$ of $P_0$. Then $P_0(\BA)^1/P_0(k)$ is compact by the compactness theorem in reduction theory, and we have $P_0(\BA)/P_0(\BA)^1\cong \Theta(T)$. Using the fundamental set theorem in reduction theory over function fields, we will show in Lemma \ref{Lem 2.3} that there exists a compact subset $C$ of $H(\BA)$ such that
\[
H(\BA)=C\cdot \Theta^{+} \cdot P_0(\BA)^1 \cdot H(k),
\]
where
\[
\Theta^{+}=\{\theta \in \Theta(T): |\alpha(\theta)|_{\BA}\leq 1, \forall \alpha \in \Delta\},
\]
and $\Delta$ is the set of simple roots of $H$ relative to $T$. These results are analogues of some results in reduction theory over number fields as described in \cite{Weil 1965}. Next, we will follow Weil's approach to use these results to study the convergence of theta integrals. In particular, we will show in Lemma \ref{Lem 5} that the theta integral $I(\Phi)$ is absolutely convergent for any $\Phi \in \CS(X(\BA))$ whenever the integral
\[
\int_{\Theta^{+}}\prod_{\lambda} \sup(1, |\lambda(\theta)|_{\BA}^{-1})^{m_{\lambda}}\cdot |\Delta_{P_0}(\theta)|_{\BA}^{-1}\dif \theta
\]
is convergent, where $\lambda$ runs through the characters of $T$ appearing as the weights of the linear representation of $H$ into $\Aut(X)$ given by $x=(x_1, \ldots, x_n)\mapsto h x=(h x_1, \ldots, h x_n)$, $m_{\lambda}$ is the multiplicity of $\lambda$, and $\Delta_{P_0}$ is the algebraic module of $P_0$ (note that $|\Delta_{P_0}(\cdot)|_{\BA}^{-1}$ is the modular character of $P_0(\BA)$). The proof of this result depends on Lemma \ref{Lem 2.3} and the choice of the place $v_0$.

In Section \ref{Section 4}, we will study the Siegel Eisenstein series. For $\Phi \in \CS(X(\BA))$ and $s\in \BC$, we define the Siegel Eisenstein series on $G(\BA)$ by
\[
E(g,s,\Phi)=\sum_{\gamma \in P(k)\bs G(k)} f_{\Phi}^{(s)}(\gamma g),
\]
where $f_{\Phi}^{(s)}(g)=|a(g)|^{s-s_0}\omega(g)\Phi(0)$ and $s_0=(m-n)/2$.  We will show in Theorem \ref{Thm 1}, using Godement's convergence criterion in the function field case as established in \cite{Morris 1982}, that $E(g,s, \Phi)$ is absolutely convergent for all $\Phi$ whenever $\Re(s)>n/2$; in particular, it follows  that $E(g,s, \Phi)$ is holomorphic at $s_0$ for all $\Phi$ whenever $m>2n$. Assume $m>2n$ and write $E(\Phi)=E(1, s_0, \Phi)$ for $\Phi \in \CS(X(\BA))$. Using the Bruhat decomposition $G(k)=\cup_{r=0}^n P(k) w_r N(k)$, we can show that $E=\sum_{r=0}^n E_{X_r}$, where $E_{X_0}(\Phi)=\Phi(0)$, and for $r\geq 1$, $E_{X_r}$ is given by
\[
E_{X_r}(\Phi)=\sum_{b\in \Her_n(k)}\omega(w_r n(b))\Phi(0)=\sum_{b\in \Her_n(k)}\int_{X_r(\BA)}\Phi(x)\psi(\tr(x, x)b)\dif x.
\]
In fact, if we identify $V^r$ with $X_r$ via $(x_1, \ldots, x_r)\mapsto (x_1, \ldots, x_r, 0, \ldots, 0)$, we have
\[
E_{X_r}(\Phi)=\sum_{b\in \Her_r(k)}\int_{V^r(\BA)}\Phi(x)\psi(\tr(x, x)b)\dif x.
\]
We will regard $E_{X_r}$ as coming from the pair $(G_r, H)$, where $G_r=U(r, r)$.
Integrals of the form $F_{\Phi}^{*}(b):=\int_{X(\BA)}\Phi(x)\psi(\tr(x, x)b)\dif x$ are studied in \cite[n. 2]{Weil 1965}; and by using the results proved there, we can show that $E_{X}(\Phi)=\sum_{b\in \Her_n(k)}F_{\Phi}^{*}(b)=\sum_{b\in \Her_n(k)}F_{\Phi}(b)$, where $F_{\Phi}$ is the Fourier transform of $F_{\Phi}^{*}$. So we have $E_X=\sum_{b\in \Her_n(k)}\mu_b$, where $\mu_b$ is given by $\mu_b(\Phi)=F_{\Phi}(b)$ for $\Phi \in \CS(X(\BA))$.
Moreover, we can show as in \cite{Weil 1965} that each $\mu_b$ is given by the positive measure $|\theta_b|_{\BA}$ determined by a gauge form $\theta_b$ on the variety $U(b)$, where $U(b)$ consists of points $x=(x_1, \ldots, x_n)$ in $X$ of maximal rank and satisfying $i_X(x):=((x_i, x_j))=b$.
These results will be summarized as two theorems: Theorem \ref{Thm 2} says that $E_X=\sum_{b\in \Her_n(k)}\mu_b$, where each $\mu_b$ is given by the positive measure $|\theta_b|_{\BA}$ determined by a gauge form $\theta_b$ on the variety $U(b)$; and Theorem \ref{Thm 3} says that $E=E_X+\sum_{0\leq r \leq n-1}E_{X_r}$, where each $E_{X_r}$ has the same form as $E_X$.

In Section \ref{Section 5}, we will prove some uniqueness theorems analogous to those in \cite{Weil 1965}. We say a positive measure is tempered if it is defined by a positive tempered distribution, and we identify a positive tempered measure with the positive tempered distribution used to define it. Let $\hat{E}$ be a tempered measure on $X(\BA)$, invariant under $G(k)$, and let $\Phi \in \CS(X(\BA))$; then
$g\mapsto \hat{E}(\omega(g)\Phi)$ is a continuous function on $G(\BA)$, left invariant under $G(k)$. We will give conditions for this function to be bounded on $G(\BA)$, uniformly in $\Phi$ on every compact subset of $\CS(X(\BA))$; for this,
we will use the results of the reduction theory formulated in Section \ref{Section 3} and Lemma \ref{Lem 6}. After giving these conditions, we will prove Theorem \ref{Thm 4}, which says that if $m>2n$, then any positive measure $E'$ on $X(\BA)$, which is invariant under $G(k)$ and the local group $H_v$ for a place $v$ of $k$ such that $U(0)_v$ is non-empty, and such that $E'-E$ is a sum of measures supported by $U(b)_{\BA}$ for any $b\in \Her_n(k)$, must be equal to $E$ itself, where $E$ is the positive tempered measure on $X(\BA)$ given by the Siegel Eisenstein series $E(\Phi)$. Here we say, following Weil, that a measure on $X(\BA)$ is supported by $U(b)_{\BA}$ if it is the image of a measure on $U(b)_{\BA}$ under $j_{\BA}$, where $j_{\BA}$ is the canonical injection of $U(b)_{\BA}$ into $X(\BA)$; for example, each $\mu_b$ as above is such a measure. The proof of this theorem is similar to that as in the number field case in \cite{Weil 1965}.

Finally, in Section \ref{Section 6}, we will give a convergence criterion for the theta integral, and prove the Siegel-Weil formula. For $\Phi \in \CS(X(\BA))$, define the theta integral $I(\Phi)$ by
\[
I(\Phi)=\int_{H(\BA)/H(k)} \sum_{x\in X(k)}\Phi(hx)~\dif h,
\]
where $\dif h$ is the Haar measure on $H(\BA)$ such that $\vol(H(\BA)/H(k))=1$.
We will show in Proposition \ref{Prop 8} that the theta integral $I(\Phi)$ is absolutely convergent for any $\Phi \in \CS(X(\BA))$ whenever $r=0$ or $m-r>n$. By Lemma \ref{Lem 5}, it is sufficient to show that the integral
\[
\int_{\Theta^{+}}\prod_{\lambda} \sup(1, |\lambda(\theta)|_{\BA}^{-1})^{m_{\lambda}}\cdot |\Delta_{P_0}(\theta)|_{\BA}^{-1} \dif \theta
\]
is convergent whenever $r=0$ or $m-r>n$; and this can be achieved by direct computations. Note that if $m>2n$, then both the theta integral $I(\Phi)$ and the Siegel Eisenstein series $E(\Phi)$ are absolutely convergent for any $\Phi \in \CS(X(\BA))$, and hence give two positive tempered measures $I$ and $E$ on $X(\BA)$. We will show in Theorem \ref{Thm 5} that if $m>2n$, then $I=E$ and $I_b=\mu_b$ for every $b\in \Her_n(k)$, where $I_b$ is the measure on $X(\BA)$ given by
\[
I_b(\Phi)=\int_{H(\BA)/H(k)}\sum_{\xi \in U(b)_k}\Phi(h \xi)~\dif h.
\]
Here $U(b)_k$ is the set of elements $x$ in $X(k)$ of maximal rank and satisfying $i_X(x)=b$. This theorem will be proved by induction on the restriction of $\Phi$ to $X_r$, following Weil's approach. If $n=0$, then $X=\{0\}$ and $G=\{1\}$, and hence $E(\Phi)=\Phi(0)$ and $I(\Phi)=\vol(H(\BA)/H(k))\cdot \Phi(0)$; the desired results follow from the hypothesis $\vol(H(\BA)/H(k))=1$. Now we assume that $n\geq 1$ and that the results are valid for any $r<n$. We let $I_X=\sum_{b\in \Her_n(k)}I_b$; then
\[
I_X(\Phi)=\int_{H(\BA)/H(k)}\sum_{\xi }\Phi(h \xi)~\dif h,
\]
where $\xi$ runs over the elements in $X(k)$ which are of maximal rank. We define $I_{X_r}$ similarly for $0\leq r \leq n-1$. Then $I=I_X+\sum_{0\leq r \leq n-1}I_{X_r}$. Remember that Theorem \ref{Thm 2} says $E_{X}=\sum_{b\in \Her_n(k)}\mu_b$ and Theorem \ref{Thm 3} says $E=E_X+\sum_{0\leq r \leq n-1}E_{X_r}$. The induction hypothesis implies that $I_{X_r}=E_{X_r}$ for $0\leq r \leq n-1$; thus $I=I_X+E-E_X$, and $I-E=I_X-E_X=\sum_{b\in \Her_n(k)}I_b-\sum_{b\in \Her_n(k)}\mu_b$. Note that the measures $\mu_b$ are supported by $U(b)_{\BA}$,  and it can be shown as in \cite{Weil 1965} that the measures $I_b$ are also so.
Therefore the positive tempered measure $I$ has the property stated in Theorem \ref{Thm 4} and is hence equal to the measure $E$, and thus $I_X=E_X$. Since $I_{b}$ and $\mu_b$ are the restrictions of $I_{X}$ and of $E_X$ to the set $i_X^{-1}(\{b\})$ respectively, it follows that $I_{b}=\mu_b$ for any $b\in \Her_n(k)$. The desired results are thus proved.\\

More generally, we will consider the casea where $D$ is a finite dimensional division algebra over $k$ equipped with an involution $\xi \mapsto \bar{\xi}$, whose center is either $k$ or a quadratic field extension of $k$. We consider similar spaces and groups over $k$ as above.

Let $\eta=\pm 1$.
We consider the space $W=M_{1\times 2n}(D)$ of row vectors of length $2n$ over $D$, equipped with a non-degenerate $(-\eta)$-hermitian form $\pair{~, ~}: W \times W \ra D$ given by
\[
\pair{x,y}=x \begin{pmatrix} 0 & 1_n \\ -\eta \cdot 1_n & 0 \end{pmatrix} {}^t\!\bar{y},
\]
where $1_n$ is the identity matrix of size $n$;
and we consider the space $V=M_{m\times 1}(D)$ of column vectors of length $m$ over $D$, equipped with a non-degenerate $\eta$-hermitian form $(~,~): V\times V \ra D$.

Consider the isometry groups $G$ of $W$ and $H$ of $V$ respectively. For example, if $D=k$ and $\eta=1$, then $W$ is a non-degenerate symplectic space over $k$ and $V$ is a non-degenerate quadratic space over $k$, and $G=Sp(W)$ is the associated symplectic group, $H=O(V)$ is the associated orthogonal group. We will consider the Siegel-Weil formula for the pair $(G, H)$, and we will obtain similar results as above. But we want to give a remark. If $D=k$, $\eta=1$ and $m$ is odd, then the Weil representation applies only to the two-fold metaplectic cover $\wt{Sp(W)(\BA)}$ of $Sp(W)(\BA)$ and not to the group $Sp(W)(\BA)$, and in this case we have to consider the Siegel-Weil formula for $(\wt{Sp(W)}, O(V))$. Note that in any other case the Weil representation applies to the isometry group $G$.\\

{\bf Acknowledgments}.
I thank Prof. Ye Tian and Prof. Song Wang for many helpful discussions on Weil's two Acta Math. papers, and I thank Prof. Wee Teck Gan for some helpful suggestions. I thank a referee of Journal of Number Theory  for many corrections and suggestions which greatly improved the quality of this paper.
I also thank the Morningside Center of Mathematics for its hospitality and support during my visits in Beijing. This work was supported by the National Natural Science Foundation of China (Grant \# 11301164 and \# 11671380).

\section{Notation and preliminaries}\label{Section 2}
In this section, we introduce the notation and preliminaries used in this paper.

Let $q$ be a power of an odd prime number and let $\BF_q$ be the finite field of order $q$. Let $k$ be a function field in one variable over $\BF_q$, such that $\BF_q$ is algebraically closed in $k$. Note that $\mathrm{char}(k)\neq 2$ since $q$ is odd.

Let $\BF_q[x]$ be the polynomial ring in one indeterminate over $\BF_q$, with fraction field $\BF_q(x)$, and we regard $k$ as a finite separable extension of $\BF_q(x)$.

Let $O_k$ be the ring of integers of $k$, which is the integral closure of $\BF_q[x]$ in $k$; it is a Dedekind domain.

A place of $k$ is an equivalence class of nontrivial absolute values on $k$. The ``infinite place" $(x^{-1})$ of $\BF_q(x)$ is defined by the absolute value given by $|f/g|=q^{\deg(f)-\deg(g)}$ for $f, g\in \BF_q[x]$. See $\S 4.4$ of \cite{Ramakrishnan-Valenza 1999} for more details.

For a place $v$ of $k$, let $k_v$ be the completion of $k$ at $v$ (called a local field), $O_v$ be the ring of integers in $k_v$, $\mathfrak{p}_v$ be the maximal ideal of $O_v$, $\mathbb{F}_v$ be the residue field at $v$, and let $q_v$ be the order of the residue field $\mathbb{F}_v$.

Let $\BA$ be the ring of adeles of $k$, and let $\BA^{\times}$ be the group of ideles of $k$. Let $O_{\BA}=\prod_v O_v$, where $v$ runs over the places of $k$.

We fix a place $v_0$ of $k$ which lies above the ``infinite place" $(x^{-1})$ of $\BF_q(x)$ and such that the residue field of $k$ at $v_0$ is $\BF_q$.

We fix an algebraically closed field extension $\Omega$ of $k$, called the universal domain.

Let $D$ be a finite dimensional division algebra over $k$ equipped with an involution $\xi \mapsto \bar{\xi}$. Let $K$ be the center of $D$. We assume that the subfield of $K$ formed of elements invariant under the involution is $k$. Then $K$ is either $k$ or is a quadratic field extension of $k$.

Let $\alpha^2$ be the dimension of $D$ over $K$.

Let $\eta=\pm 1$. Let $\delta$ be the dimension of $D$ over $k$, and let $\delta'$ be the dimension over $k$ of the space of elements $\xi$ of $D$ such that $\bar{\xi}=\eta \xi$, and let $\epsilon=\delta'/\delta$.

Then
\[
\epsilon=
\begin{cases}
0 &\text{if $D=k$ and $\eta=-1$}; \\
\frac{1}{4} &\text{if $D$ is a division quaternion algebra over $k$ and $\eta=1$};\\
\frac{1}{2} &\text{if the center $K$ of $D$ is a quadratic extension of $k$};\\
\frac{3}{4} &\text{if $D$ is a division quaternion algebra over $k$ and $\eta=-1$};\\
1 &\text{if $D=k$ and $\eta=1$}.
\end{cases}
\]
Note that $[K:k]=2$ if and only if $\epsilon=1/2$, and $K=k$ otherwise. See \cite[n. 27]{Weil 1965}.

For a positive integer $d$, let $M_d(D)$ be the additive group of square matrices over $D$ of order $d$. Then $M_d(D)$ is a central simple algebra over $K$.

Let $\nu_K: M_d(D) \ra K$ be the reduced norm, and let $\nu=N_{K/k}\circ \nu_K$. Let $\tau_K: M_d(D) \ra K$ be the reduced trace, and let $\tau=\tr_{K/k}\circ \tau_K$. Note that for $x\in K$, $\nu_K(x)=x^{\alpha}$, where $\alpha^2=[D:K]$. See p. 169 of \cite{Weil 1974}.

For a matrix $x=(x_{ij})$ over $D$, let $\bar{x}=(\bar{x}_{ij})$ be the conjugate of $x$, and let ${}^t\!x=(x_{ji})$ be the transpose of $x$. We often write $x^*$ for the conjugate transpose of $x$, i.e. $x^*={}^t\!\bar{x}$.

Let $n\geq 0$ be an integer. Let $W=M_{1\times 2n}(D)$, and equip it with an $(-\eta)$-hermitian form $\pair{~,~}: W \times W \ra D$ given by
\[
\pair{x,y}=x \begin{pmatrix} 0 & 1_n \\ -\eta \cdot 1_n & 0 \end{pmatrix} y^*
\]
for $x,y\in W$. In this case, we say $W$ is a split $(-\eta)$-hermitian space.

Let
\[
G=\{g\in GL(W): \pair{xg, yg}=\pair{x,y}, \forall x,y\in W\}
\]
be the isometry group of $W$.

Then
\[
G(k)=\{g\in GL_{2n}(D): g \begin{pmatrix} 0 & 1_n \\ -\eta \cdot 1_n & 0 \end{pmatrix} g^* =\begin{pmatrix} 0 & 1_n \\ -\eta \cdot 1_n & 0 \end{pmatrix}\}.
\]

We have the Siegel parabolic subgroup $P=N M$ of $G=U(W)$, where the Levi subgroup $M$ is given by
\[
M(k)=\{m(a)=\begin{pmatrix} a& 0\\ 0 & (a^*)^{-1}\end{pmatrix}: a\in GL_n(D) \},
\]
and the unipotent radical $N$ is given by
\[
N(k)=\{n(b)=\begin{pmatrix} 1_n & b\\ 0 & 1_n \end{pmatrix}: b\in \Her_n(k)\},
\]
where
\[
\Her_n(k)=\{b\in M_{n}(D): b^*=\eta \cdot b\}
\]
 is the space of $\eta$-hermitian matrices of order $n$ over $D$.

We have the Bruhat decomposition:
\[
G(k)=\bigcup_{r=0}^n P(k) w_r P(k)=\bigcup_{r=0}^n P(k) w_r N(k),
\]
where
\[
w_{r}=\begin{pmatrix} 1_{n-r}& &  0 &\\ & 0 & & 1_r\\ 0 & & 1_{n-r}& \\ & -\eta \cdot 1_r& & 0 \end{pmatrix}.
\]

Let $m$ be a positive integer. Let $V=M_{m\times 1}(D)$, and equip it with an $\eta$-hermitian form $(~,~): V\times V \ra D$ given by
\[
(x,y)=x^* \cdot Q\cdot y,
\]
where $Q$ is an invertible element of $M_m(D)$ such that $Q^*=\eta \cdot Q$ (i.e. $\eta$-hermitian).

Denote the Witt index of $V$ by $r$, i.e. $r$ is the dimension over $D$ of a maximal totally isotropic subspace of the $\eta$-hermitian space $V$. We also say the $\eta$-hermitian matrix $Q$ is of Witt index $r$.

Let
\[
H=\{h\in GL(V): (hx, hy)=(x,y), \forall x,y\in V\}
\]
be the isometry group of $V$.

Then
\[
H(k)=\{h\in GL_m(D): h^* \cdot Q\cdot h=Q\}.
\]

If $\epsilon=0$, then $G=O(W)=O(n, n)$ is the orthogonal group associated to the split quadratic space $W$, and $H=Sp(V)=Sp(m)$ is the symplectic group associated to the symplectic space $V$.
If $\epsilon=1/4$, then $G$ is the quaternionic unitary group associated to the split quaternionic skew-hermitian space $W$, and $H$ is the quaternionic unitary group associated to the quaternionic hermitian space $V$. If $\epsilon=1/2$, then we may assume $\eta=1$ by replacing an $\eta$-hermitian matrix $Q$ by $\xi Q$, where $\xi$ is some element in $K$ such that $\xi \bar{\xi}=\eta$ which exists by Hilbert's Theorem 90, and then $G$ is the unitary group associated to the split skew-hermitian space $W$, and $H$ is the unitary group associated to the hermitian space $V$.
If $\epsilon=3/4$, then $G$ is the quaternionic unitary group associated to the split quaternionic hermitian space $W$, and $H$ is the quaternionic unitary group associated to the quaternionic skew-hermitian space $V$. If $\epsilon=1$, then $G=Sp(2n)$ is the symplectic group associated to the symplectic spacc $W$, and $H=O(V)$ is the orthogonal group associated to the quadratic space $V$.

Let $X=M_{m\times n}(D)$ be the additive group of $m\times n$ matrices over $D$ (except in Section 2, where we allow $X$ to be an arbitrary vector space over $k$). We write an element $x$ of $X=M_{m\times n}(D)$ in the form $x=(x_1, \ldots, x_n)$, where each $x_i\in M_{m\times 1}(D)$. Then $X=V^n$ with $V=M_{m\times 1}(D)$. We can regard $X$ as a left module over $M_{m}(D)$ of rank $n$.

For $0\leq r\leq n$, let $X_r$ be the subspace of $X$ consisting of elements of the form $(x_1, \ldots, x_r, 0, \ldots, 0)$. In particular, $X_0=0$ and $X_n=X$.

The group $H$ acts on $X$ via $h\cdot (x_1, \ldots, x_n)=(h\cdot x_1, \ldots, h\cdot x_n)$, where $x_i\in V=M_{m\times 1}(D)$.

Let $\BW=V\otimes_D W$, and equip it with a symplectic form over $k$ given by
\[
\pair{\pair{~,~}}=\kappa \cdot \tr_{D/k}((~,~)\otimes \overline{\pair{~,~}}),
\]
where
\[
\kappa=
\begin{cases}
2 &\text{if $D=k$},\\
1 & \text{otherwise}.
\end{cases}
\]
Note that we have followed the notation on p. 279 of \cite{Kudla-Sweet 1997}, so the $\kappa$ here is twice of that on p. 364 of \cite{Kudla 1994}.

Let
\[
Sp(\BW)=\{u\in GL(\BW):\pair{\pair{xu,yu}}=\pair{\pair{x,y}}, \forall x,y\in \BW \}
\]
be the symplectic group associated to the symplectic space $\BW$. Note that $Sp(\BW)$ acts on $\BW$ on the right.

Then $(G, H)$ is a {\sl reductive dual pair of type I} in $Sp(\BW)$ in Howe's sense (\cite[n. 5]{Howe 1979}).

There is a natural homomorphism
\[
\iota: G \ra Sp(\BW)
\]
given by $(v\otimes w)\iota(g)=v\otimes wg$ for $v\in V$, $w\in W$ and $g\in G$.

Let $i_X: X \ra \Her_n$ be the mapping given by $i_X(x)=(x,x):=((x_i, x_j))$ for $x=(x_1, \ldots, x_n)\in X$ with each $x_i\in V$.

When $b$ belongs to $\Her_n(k)$, the set $i_X^{-1}(\{b\})$, on the universal domain $\Omega$, is a $k$-closed
subset of $X(\Omega)$. We denote by $U(b)$ the set of points of maximal rank of this set; it is a $k$-open
subset of $i_X^{-1}(\{b\})$. Thus $U(b)_k$ is the set of points $x$ in $X(k)$ of maximal rank and satisfying $i_X(x)=b$.

For an algebraic group $\mathfrak{G}$ over $k$ and for a $k$-algebra $A$, we write $\mathfrak{G}(A)$ (or $\mathfrak{G}_A$) for the group of $A$-points. In particular, for a place $v$ of $k$, we often write $\mathfrak{G}_v=\mathfrak{G}(k_v)$, and write $\mathfrak{G}_v^o=\mathfrak{G}(O_v)$ whenever $\mathfrak{G}$ is defined over $O_v$. For example, we write $D_{\BA}=D\otimes_k \BA$, $K_{\BA}=K\otimes_k \BA$, and $X(\BA)=X\otimes_k \BA$.

For a place $v$ of $k$, let $|\cdot |_v$ be the absolute value on $k_v$, and let $|\cdot |_{\BA}=\prod_v |\cdot|_v$ be the adelic absolute value on $\BA$.

Define the adelic absolute value on $K_{\BA}$ by $|x|_{K_{\BA}}=|N_{K/k}(x)|_{\BA}$ for $x\in K_{\BA}$.

Let $\wt{G(\BA)}$ be the two-fold metaplectic cover of $G(\BA)$ if $\epsilon=1$ and $m$ is odd, and let $\wt{G(\BA)}=G(\BA)$ otherwise. Note that if $\epsilon=1$ and $m$ is odd, then $W$ is a symplectic space and $G=Sp(W)$ is the associated symplectic group, and $\wt{G(\BA)}=G(\BA)\times \{\pm 1\}$ with group multiplication given by
\[
(g_1, z_1)\cdot (g_2, z_2)=(g_1 g_2, z_1 z_2 \tilde{c}(g_1, g_2))
\]
where $\tilde{c}(g_1, g_2)$ is Rao's normalized $\{\pm 1\}$-valued cocycle defined locally as in Thm 5.3 on p. 361 of \cite{Rao 1993}; and we can embed $G(k)$ and $N(\BA)$ into $\wt{G(\BA)}$ via $g \mapsto (g, 1)$.

Let $\pi: \wt{G(\BA)} \ra G(\BA)$ be the canonical projection. Let $\wt{P(\BA)}=\pi^{-1}(P(\BA))$, and let $\wt{M(\BA)}=\pi^{-1}(M(\BA))$. Note that $\wt{P(\BA)}=N(\BA) \wt{M(\BA)}$, where $N(\BA) $ can be embedded  into $\wt{P(\BA)}$ via $n(b)\mapsto (n(b), 1)$.

For the group $G$, there is an Iwasawa decomposition $G(\BA)=N(\BA) M(\BA)G(O_{\BA})$. Let $\wt{G(O_{\BA})}=\pi^{-1}(G(O_{\BA}))$; then $\wt{G(\BA)}=N(\BA) \wt{M(\BA)} \wt{G(O_{\BA})}$.

For $g\in G(\BA)$, write $g=n(b)m(a)g_1$ with $n(b)\in N(\BA)$, $m(a)\in M(\BA)$ and $g_1\in G(O_{\BA})$, and let $|a(g)|=|\nu_K(a)|_{K_{\BA}}=|\nu(a)|_{\BA}$. For $\tilde{g}=(g, \zeta)\in \wt{G(\BA)}$, let $|a(\tilde{g})|=|a(g)|$.

Fix a non-trivial additive character $\psi: \BA/k \ra \BC^{\times}$.

We identify $\Her_n(\BA)$ with its Pontryagin dual via $\psi$:
\[
[b_1, b_2]\mapsto \psi(\frac{\kappa}{2}\cdot \tau(b_1 b_2)), \qquad \text{for $b_1, b_2\in \Her_n(\BA)$}.
\]

For the local groups $Sp(\BW)_v$ and the adelic group $Sp(\BW)_{\BA}$, there are associated metaplectic groups $Mp(\BW)_v$ and $Mp(\BW)_{\BA}$:
\[
\begin{aligned}
& 1\ra \BC^1 \ra Mp(\BW)_{v} \ra Sp(\BW)_v \ra 1,\\
& 1\ra \BC^1 \ra Mp(\BW)_{\BA} \ra Sp(\BW)_{\BA} \ra 1,
\end{aligned}
\]
here $\BC^1=\{z\in \BC: z \bar{z}=1\}$. See \cite[n. 34, 37]{Weil 1964} and \cite{Rao 1993}.

The local metaplectic group $Mp(\BW)_{v}$ can be identified with the set $Sp(\BW)_{v}\times \BC^1$ equipped with a group multiplication
\[
(g_1, z_1)\cdot (g_2, z_2)=(g_1 g_2, z_1 z_2 \cdot c_v(g_1, g_2)),
\]
where $c_v(g_{1}, g_{2})$ is Rao's cocycle as in Thm. 4.1 on p. 358 of \cite{Rao 1993}.

Similarly, the adelic metaplectic group $Mp(\BW)_{\BA}$ can be identified with the set $Sp(\BW)_{\BA}\times \BC^1$ equipped with a group multiplication
\[
(g_1, z_1)\cdot (g_2, z_2)=(g_1 g_2, z_1 z_2 \cdot c(g_1, g_2)),
\]
where $c(g_1, g_2)=\prod_v c_v(g_{1v}, g_{2v})$.

For the non-trivial additive character $\psi: \BA/k \ra \BC^{\times}$, there is an associated Weil representation $\omega_{\psi}$ of $Mp(\BW)_{\BA}$ on $\CS(X(\BA))$ given by
\[
\omega_{\psi}(g,z)=z\cdot r_{\psi}(g),
\]
where
\[
r_{\psi}(g)=\otimes r_{\psi_v}(g_v)
\]
and $r_{\psi_v}(g_v)$ is defined as in Thm. 3.5 on p. 355 of \cite{Rao 1993}.\\

Now we fix a Hecke character $\chi$ of $K$ as follows. Recall $K$ is the center of $D$, and $K$ is a quadratic field extension of $k$ if $\epsilon=1/2$, and $K=k$ otherwise.

If $\epsilon=0$ or $\frac{1}{4}$, let $\chi$ be the trivial character of $\BA^{\times}$.

If $\epsilon=\frac{1}{2}$, let $\chi$ be a unitary character of $K_{\BA}^{\times}/K^{\times}$ with $\chi|_{\BA^{\times}}=\epsilon_{K/k}^{\alpha m}$, where $\epsilon_{K/k}$ is the quadratic Hecke character of $k$ associated to the quadratic field extension $K/k$ by class field theory, and $\alpha^2$ is the dimension of $D$ over its center $K$.

If $\epsilon=\frac{3}{4}$, let $\chi$ be the quadratic character of $\BA^{\times}/k^{\times}$ given by $\chi(x)=\prod_v (x_v, (-1)^m \det V)_v$, where $\det V\in k^{\times}/k^{\times 2}$ is the reduced norm of the matrix $((e_i, e_j))_{1\leq i, j\leq m}$ for any basis $\{e_1, \ldots, e_m\}$ of $V$ over $D$.

If $\epsilon=1$, let $\chi$ be the quadratic character of $\BA^{\times}/k^{\times}$ given by $\chi(x)=\prod_v (x_v, (-1)^{m(m-1)/2}\det V)_v$, where $(,)_v$ is the Hilbert symbol for $k_v$, and $\det V\in k^{\times}/k^{\times 2}$ is the determinant of the matrix $((e_i, e_j))_{1\leq i, j\leq m}$ for any basis $\{e_1, \ldots, e_m\}$ of $V$ over $k$.\\

We regard $\chi$ as a character of $\wt{P(\BA)}$ as follows. If $\epsilon=1$ and $m$ is odd, then $\wt{P(\BA)}$ is a double cover of $P(\BA)$, and we let $\chi((n(b)m(a), z))=z \cdot \chi(\det a) \cdot \prod_v \gamma_v(\det a_v, \psi_v)^{-1}$, where for a character $\eta$ on the local field $k_v$ and for $\alpha \in k_v^{\times}$,  $\gamma_v(\alpha, \eta)=\frac{\gamma_v(\alpha \eta)}{\gamma_v(\eta)}$, where $\alpha \eta$ is the character $x\mapsto \eta(\alpha x)$, and $\gamma_v(\eta)$ is the Weil index of the character of second degree $x\mapsto \eta(x^2)$ on $k_v$ (\cite[Appendix]{Rao 1993}). If $\epsilon\neq 1$ or $m$ is even, then $\wt{P(\BA)}=P(\BA)$, and we let $\chi(n(b)m(a))=\chi(\nu_K(a))$.

With the help of $\chi$, there is a splitting $\tilde{\iota}_{\chi}: \wt{G(\BA)}  \ra Mp(\BW)_{\BA}$ given as follows.

If $\epsilon=1$ and $m$ is odd, then $\wt{G(\BA)}$ is a double cover of $G(\BA)$, and the splitting is given by
\[\begin{aligned}
\tilde{\iota}_{\chi}: \wt{G(\BA)} & \ra Mp(\BW)_{\BA},\\
(g, z) & \mapsto (\iota(g), z \beta_{\chi}(g)),
\end{aligned}\]
where $\beta_{\chi}(g)=\prod_v \beta_{\chi_v}(g_v)\in \BC^1$ with $\beta_{\chi_v}(g_v)$ defined as in Thm. 3.1 on p. 378 of \cite{Kudla 1994}. Note that for $g_1, g_2 \in G(\BA)$, $c(\iota(g_1), \iota(g_2))\beta_{\chi}(g_1 g_2)^{-1}\beta_{\chi}(g_1) \beta_{\chi}(g_2)=\tilde{c}(g_1, g_2)^m=\tilde{c}(g_1, g_2)$, since $m$ is odd and $\tilde{c}(g_1, g_2)\in \{\pm 1\}$. See \cite[p. 379]{Kudla 1994}.

If $\epsilon \neq 1$ or $m$ is even, then $\wt{G(\BA)}=G(\BA)$, and the splitting is given by
\[\begin{aligned}
\tilde{\iota}_{\chi}: G(\BA) & \ra Mp(\BW)_{\BA},\\
g & \mapsto (\iota(g), \beta_{\chi}(g)),
\end{aligned}\]
where $\beta_{\chi}(g)=\prod_v \beta_{\chi_v}(g_v)\in \BC^1$ with $\beta_{\chi_v}(g_v)$ defined as in Thm. 3.1 on p. 378 of \cite{Kudla 1994}.

Then the Weil representation $\omega$ of $\wt{G}(\BA)$, associated to the data $(\psi, \chi)$, is defined to be
\[
\omega=\omega_{\psi, \chi}:=\tilde{\iota}_{\chi} \circ \omega_{\psi}.
\]
If $\epsilon\neq 1$ or $m$ is even, then $\wt{G(\BA)}=G(\BA)$, and the Weil representation is given explicitly as follows:
\[\begin{aligned}
 \omega(m(a))\Phi(x) &=\chi(m(a))|\nu(a)|_{\BA}^{\frac{\alpha m}{2}}\Phi(xa), \\
\omega(n(b))\Phi(x) &=\psi(q_b(x))\Phi(x), \\
\omega(w_n)\Phi(x) &=\int_{X(\BA)}\Phi(y)\psi(\kappa \cdot \tau((x, y)))\, \dif y,\\
 \omega(w_r)\Phi(x) &=\int_{X''(\BA)}\Phi(x'+z)\psi(\kappa \cdot\tau((x'',z)))\,\dif z,
\end{aligned}\]
for $\Phi \in \CS(X(\BA))$, $m(a)\in M(\BA)$, $n(b)\in N(\BA)$, $x=x'+x''\in X(\BA)$ with $x'\in X'(\BA)$ and $x''\in X''(\BA)$, where  $X'=\{(x_1, \ldots, x_{n-r}, 0, \ldots, 0)\}$ and $X''=\{(0,\ldots, 0, x_{n-r+1}, \ldots, x_n)\}$. Here
\begin{itemize}
\item \text{$\chi(m(a))=\chi(\nu_K(a))$};\\
\item \text{$\alpha^2$ is the dimension of $D$ over its center $K$};\\
\item \text{$\nu_K: M_n(D) \ra K$ is the reduced norm};\\
\item \text{$\tau=N_{K/k} \circ \tau_K$, where $\tau_K: M_n(D) \ra K$ is the reduced trace};\\
\item \text{$q_b(x)=\frac{\kappa}{2}\cdot \tau((x,x)b)$, where $\kappa=2$ if $D=k$ and $\kappa=1$ otherwise};\\
\item \text{$(y, x)=((y_i, x_j))\in \Her_n(\BA)$ for $y=(y_1, \ldots, y_n)$ and $x=(x_1, \ldots, x_n)$ in $X(\BA)$};\\
\item \text{$\dif y$ is the self-dual Haar measure on $X(\BA)$}.
\end{itemize}
If $\epsilon=1$ and $m$ is odd, then $\wt{G(\BA)}$ is a double cover of $G(\BA)$, and the action of $(m(a), \zeta)\in \wt{M(\BA)}$  is given by
\[
\omega((m(a), z))\Phi(x)=\chi((m(a), z))|\det a|_{\BA}^{\frac{m}{2}}\Phi(xa),
\]
where $\chi((m(a), z))=z \cdot \chi(\det a) \cdot \prod_v \gamma_v(\det a_v, \psi_v)^{-1}$. The actions of $u(b)\in N(\BA)$ and $w_r$ are the same as above, once we embed $N(\BA)$ and $G(k)$ into $\wt{G(\BA)}$ via $g\mapsto (g, 1)$.

The Weil representation of the group $H(\BA)$ on $\CS(X(\BA))$ is given linearly: $\omega(h)\Phi(x)=\Phi(h^{-1}x)$, where for $x=(x_1, \ldots, x_n)$ with each $x_i\in V(\BA)$, $h^{-1}x=(h^{-1}x_1, \ldots, h^{-1}x_n)$.

For the above formulas, see for example \cite[p. 400]{Kudla 1994}, \cite[p. 38]{Kudla 1996} and \cite[p. 280]{Kudla-Sweet 1997} in the local case. Note that in the references \cite{Rao 1993}, \cite{Kudla 1996} and \cite{Kudla-Sweet 1997} the base field is assumed to be of odd characteristic, so the usual actions of the Weil representation in the characteristic zero case are also valid in the odd characteristic case.

Let $\BG_m=GL_1$ be the multiplicative group in one variable over $k$.

For a locally compact group $\mathbf{G}$, its {\sl modular character} $\delta_{\mathbf{G}}: \mathbf{G} \ra \BR^{+}$ is defined by
\[
\dif \mu(gx)=\delta_{\mathbf{G}}(x)^{-1} \dif \mu(g)
\]
for a left Haar measure $\mu$ on $\mathbf{G}$.

For a connected algebraic group $U$ over $k$, its {\sl algebraic module} $\Delta_{U}$ is a $k$-rational character such that
\[
\omega(a^{-1}xa)=\Delta_{U}(a)\omega(x),
\]
where $\omega$ is any gauge form on $U$ (see \cite[p. 11]{Weil 1965}).
It is easy to check, using the measures determined by the gauge forms, that the modular character of the adelic group $U(\BA)$ can be expressed by the algebraic module $\Delta_U$ of $U$ as follows:
\[
\delta_{U(\BA)}(g)=|\Delta_{U}(g)|_{\BA}^{-1}.
\]

Finally, we say a few words about measures and distributions. We follow the convention on p. 3 of \cite{Weil 1965} and say a positive measure is tempered if it is defined by a positive tempered distribution via the Riesz representation theorem. We identity a positive tempered measure with the positive tempered distribution which is used to define it, and vice versa.

\section{Analytic preliminaries}\label{Section 3}
In this section, we will formulate some results in Harder's reduction theory over function fields (\cite{Harder 1969, Harder 1974}), and prove some  auxiliary lemmas analogous to those in \cite{Weil 1965}.

Throughout this section, we let $\CG$ be either $G$ or $H$, and let $\CG^0$ be the identity component of $\CG$.
By changing the sign of $\eta$ if necessary, we can always assume that $\CG=H$, where $H$ is the isometry group of a non-degenerate $\eta$-hermitian space $V$ over $D$, and we assume that $V$ is of dimension $m$ and of Witt index $r$.

Let $T\cong (\BG_m)^r $ be a maximal split torus in $\CG^0$ given as follows. We may assume by choosing a suitable basis of $V$ that the $\eta$-hermitian form on $V$ is given by the matrix
\[
Q=\begin{pmatrix} 0 & 0 & 1_r \\ 0 & Q_0 & 0 \\ \eta \cdot 1_r & 0 & 0  \end{pmatrix},
\]
where $Q_0$ is the matrix (of order $m-2r$) of an anisotropic $\eta$-hermitian form. For $t=(t_1, \ldots, t_r)\in (\BG_m)^r$, denote by $d(t)$ the diagonal matrix of order $m$ whose diagonal elements are
\[
(t_1, \ldots, t_r, 1, \ldots, 1, t_1^{-1}, \ldots, t_r^{-1}).
\]
Then $d$ is an isomorphism of $(\BG_m)^r$ onto a maximal split torus $T$ of $\CG^0$.

Recall we have fixed a place $v_0$ of $k$ such that the residue field of $k_{v_0}$ is $\BF_q$, and we let $\varpi$ be a uniformizer at $v_0$.

Motivated by the constructions on p. 19 of \cite[$\S$ I.2.1.]{Moeglin-Waldspurger 1995}, we proceed as follows. For $\tau \in \BZ$, define an element $a_{\tau}\in \BA^{\times}$ whose component at $v_0$ is $\varpi^{\tau}$ and the component at any other place is 1. In particular, $|a_{\tau}|_{\BA}=q^{-\tau}$.

Let
\[
\Theta(T)=\{(a_{\tau_1}, \ldots, a_{\tau_r}): \tau_i \in \BZ\},
\]
and regard $\Theta(T)$ as a subset of $T(\BA)$ via $(\BG_m)^r\cong T$.

For example, $\Theta(\BG_m)=\{a_{\tau}: \tau \in \BZ\}\cong \BZ$.

Let $P_0$ be a minimal $k$-parabolic subgroup of $\CG^0$ which contains $T$. Then $\CG^0/P_0$ is isomorphic to a projective variety over $k$, and hence $\CG^0(\BA)/P_0(\BA)\cong  (\CG^0/P_0)(\BA)$ is compact. Note that $\CG(\BA)/\CG^0(\BA)$ is compact (see \cite[p. 571, Prop. 3.2.1]{Conrad 2012}), and  it follows that $\CG(\BA)/P_0(\BA)$ is also compact.

Let
\[
P_0(\BA)^1=\{p\in P_0(\BA): |\lambda(p)|_{\BA}=1, \forall \lambda \in X_k(P_0)\},
\]
where $X_k(P_0)$ is the group of $k$-rational characters of $P_0$.

We can define $T(\BA)^1$ and $\CG^0(\BA)^1$ similarly. Note that $X_k(\CG^0)=\{1\}$, so $\CG^0(\BA)^1=\CG^0(\BA)$, whence $\CG^0(\BA)/\CG^0(k)$ is of finite volume for any Haar measure (see \cite{Harder 1969} or \cite[p. 25]{Oesterle 1984}). Since $\CG(\BA)/\CG^0(\BA)$ is compact, it
follows that $\CG(\BA)/\CG(k)$ is also of finite volume for any Haar measure.

By the compactness theorem in reduction theory (see \cite[p. 46, Korollar 2.2.7]{Harder 1969} or \cite[p. 212]{Springer 1994}), we know that $P_0(\BA)^1/P_0(k)$ is compact.

Moreover, we have the following result, which is a generalization of the classical result $\BA^{\times}/\BA^{\times, 1}\cong \Theta(\BG_m)$, and there is an analogue on \cite[p. 17]{Weil 1965} in the number field case.
\begin{lem} \label{Lem 2.1}
$P_0(\BA)/P_0(\BA)^1\cong \Theta(T)$.
\end{lem}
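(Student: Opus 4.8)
The plan is to construct an explicit isomorphism using the fact that $k$-rational characters of $P_0$ restrict to characters of the maximal split torus $T$, and that on $T$ these characters are detected precisely by the components at the single place $v_0$. First I would recall that $X_k(P_0) = X_k(M_0)$, where $M_0$ is the Levi factor of $P_0$; since the unipotent radical carries no nontrivial $k$-rational characters and $T$ is the maximal split torus of $M_0$, the restriction map identifies $X_k(P_0)$ with a finite-index subgroup of $X_k(T) \cong \BZ^r$. The key point is that for $t = (t_1, \ldots, t_r) \in T(\BA)$, a character $\lambda \in X_k(P_0)$ of the form $\lambda(t) = \prod_i t_i^{c_i}$ satisfies $|\lambda(p)|_{\BA} = 1$ for $p \in P_0(\BA)$ exactly when the restriction of $\lambda$ to $T(\BA)$ lies in $T(\BA)^1$, because a general $p \in P_0(\BA)$ decomposes (up to the kernel of all characters) through its image in $T(\BA)$.

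Next I would show that $\Theta(T)$ is a set of coset representatives for $P_0(\BA)/P_0(\BA)^1$. The natural map to consider is the composite $\Theta(T) \hookrightarrow T(\BA) \hookrightarrow P_0(\BA) \twoheadrightarrow P_0(\BA)/P_0(\BA)^1$. For injectivity: if $\theta = (a_{\tau_1}, \ldots, a_{\tau_r}) \in P_0(\BA)^1$, then $|\lambda(\theta)|_{\BA} = 1$ for every $\lambda \in X_k(P_0)$; writing $\lambda(\theta) = \prod_i a_{\tau_i}^{c_i}$ and using $|a_{\tau}|_{\BA} = q^{-\tau}$, one gets $\sum_i c_i \tau_i = 0$ for all $(c_i)$ ranging over the finite-index subgroup of $\BZ^r$ coming from $X_k(P_0)$, which forces all $\tau_i = 0$. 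This is where I would use that the characters in $X_k(P_0)$, restricted to $T$, span $X_k(T) \otimes \BQ$, so the pairing is nondegenerate over $\BQ$ and hence over $\BZ$ after clearing denominators. For surjectivity I would take an arbitrary $p \in P_0(\BA)$, reduce modulo $P_0(\BA)^1$, and match it against a suitable $\theta \in \Theta(T)$: the values $|\lambda(p)|_{\BA}$ lie in $q^{\BZ}$ (since all local contributions are powers of residue-field orders and the quotient $P_0(\BA)/P_0(\BA)^1$ injects into $\prod_{\lambda} |\lambda(\cdot)|_{\BA} \subset (\BR^{+})^{X_k(P_0)}$, a lattice), and I would exhibit $\theta$ realizing the same tuple of absolute values.

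The main obstacle I expect is the surjectivity step, specifically verifying that every attainable value of $(|\lambda(p)|_{\BA})_{\lambda}$ is already realized by some $\theta \in \Theta(T)$ supported only at $v_0$. This requires knowing that the image of $P_0(\BA)$ under $p \mapsto (|\lambda(p)|_{\BA})_{\lambda \in X_k(P_0)}$ is exactly the lattice generated by the values on $\Theta(T)$, rather than a denser subgroup coming from components at other places. I would handle this by using the model case $\BA^{\times}/\BA^{\times,1} \cong \Theta(\BG_m) \cong \BZ$, already invoked in the excerpt, together with the fact that the residue field at $v_0$ is $\BF_q$ so that $|a_{\tau}|_{\BA} = q^{-\tau}$ generates the full value group $q^{\BZ}$ of $|\cdot|_{\BA}$ on $k^{\times}$; applying this coordinatewise through the isomorphism $(\BG_m)^r \cong T$ reduces the rank-$r$ statement to $r$ independent copies of the rank-one case, after restricting the characters of $P_0$ to $T$ and tensoring with $\BQ$ to invert the finite index. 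Once the map is seen to be a bijective homomorphism of abelian groups, it is an isomorphism, completing the proof.
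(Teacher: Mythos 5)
The paper states Lemma \ref{Lem 2.1} without proof --- it is offered only as ``a generalization of the classical result $\BA^{\times}/\BA^{\times,1}\cong\Theta(\BG_m)$'' with a pointer to the number-field analogue in Weil --- so there is no argument of the author's to compare yours against; I can only assess your proposal on its own terms. Your first two steps are sound: the map $p\mapsto(\log_q|\lambda(p)|_{\BA})_{\lambda}$ embeds $P_0(\BA)/P_0(\BA)^1$ into $\Hom(X_k(P_0),\BZ)\cong\BZ^r$, and your injectivity argument (a finite-index subgroup of $X_k(T)\cong\BZ^r$ spans over $\BQ$, so $\sum_i c_i\tau_i=0$ for all $(c_i)$ forces $\tau_i=0$) shows the image of $\Theta(T)$ has rank $r$. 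These two facts already prove the lemma as literally stated, since a subgroup of $\BZ^r$ containing a rank-$r$ subgroup is free abelian of rank $r$, hence abstractly isomorphic to $\Theta(T)$.

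The genuine gap is in the surjectivity of the natural map, which is the stronger statement you set out to prove (and the one the paper actually uses later, e.g.\ when it writes $P_0(\BA)=\Theta(T)\cdot P_0(\BA)^1$ in the proof of Lemma \ref{Lem 4}). Your key assertion that a general $p\in P_0(\BA)$ ``decomposes through its image in $T(\BA)$'' modulo the common kernel of the $|\lambda(\cdot)|_{\BA}$ fails whenever the Levi $M_0=Z(T)$ of the minimal parabolic is strictly larger than $T$, which is precisely the case for the non-split groups of this paper. Concretely, let $\CG=H=U(V)$ with $V$ over a quadratic extension $K/k$ having the same constant field $\BF_q$. Then $M_0\cong(\Res_{K/k}\BG_m)^r\times U(V_0)$ and $X_k(P_0)$ is generated by $\lambda_i=N_{K/k}\circ\mathrm{pr}_i$. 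On $M_0(\BA)$ the value $|\lambda_i(p)|_{\BA}=|a_i|_{K_{\BA}}$ runs over all of $q^{\BZ}$ (the value group of the module on $K_{\BA}^{\times}$ is $q_K^{\BZ}$ with $q_K$ the order of the constant field of $K$, here $q_K=q$), whereas on $\Theta(T)$ one gets $|\lambda_i(\theta)|_{\BA}=q^{-2\tau_i}\in q^{2\BZ}$ because $N_{K/k}$ restricted to the split subtorus is the squaring map. Hence the natural map $\Theta(T)\to P_0(\BA)/P_0(\BA)^1$ has image of index $2^r$ and is not surjective; the same phenomenon occurs in the quaternionic cases via the reduced norm. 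Your proposed fix --- ``tensoring with $\BQ$ to invert the finite index'' --- cannot repair this, because the assertion is an equality of lattices and the two lattices genuinely differ; the coordinatewise reduction to the rank-one case $\BA^{\times}/\BA^{\times,1}\cong\Theta(\BG_m)$ is valid only when $X_k(P_0)$ restricts onto all of $X_k(T)$, e.g.\ when $D=k$. So either content yourself with the abstract isomorphism (already established by your first two steps), or replace $\Theta(T)$ by a lattice adapted to $M_0(\BA)$ rather than to $T(\BA)$.
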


For $c\in \BR$, let
\[
\Theta(c)=\{\theta\in \Theta(T): |\alpha(\theta)|_{\BA}\leq q^c, \forall \alpha \in \Delta\},
\]
where $\Delta$ is the set of simple roots of $\CG^0$ relative to $T$, which is given by
\[
\Delta=\{x_i- x_{i+1}: 1\leq i \leq r-1\}\cup \{2x_r\}.
\]
where $x_i(t)=t_i$ for $t=(t_1, \ldots, t_r)\in T$. See for example \cite[p. 76]{Weil 1965}.

It is easy to verify the following result.
\begin{lem} \label{Lem 2.2}
For $c\in \BR$, we have
\[
\Theta(c)=\{(a_{\tau_1}, a_{\tau_2}, \ldots, a_{\tau_r})\in \Theta(T): -c/2 \leq \tau_r \leq \tau_{r-1}+c \leq \ldots \leq \tau_2+(r-2)c \leq \tau_1+(r-1)c\}.
\]
In particular,
\[
\Theta(0)=\{(a_{\tau_1}, a_{\tau_2}, \ldots, a_{\tau_r})\in \Theta(T): 0 \leq \tau_r \leq \tau_{r-1} \leq \ldots \leq \tau_2 \leq \tau_1\}.
\]
Moreover, for any $c$, we have
\[
\Theta(c)=(a_{-(2r-1)c/2}, a_{-(2r-3)c/2}, \ldots, a_{-3c/2}, a_{-c/2})\cdot \Theta(0).
\]
\end{lem}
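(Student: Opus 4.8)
The statement is purely computational: the plan is to evaluate each simple root on an element of $\Theta(T)$, rewrite the defining inequalities of $\Theta(c)$ as linear inequalities among the exponents $\tau_1,\dots,\tau_r$, and then reorganize them into the asserted telescoping chain. First I would record the effect of the characters $x_i$ on $\Theta(T)$: for $\theta=(a_{\tau_1},\dots,a_{\tau_r})\in\Theta(T)$ the character $x_i$ sends $\theta$ to the idele $a_{\tau_i}$, so by the normalization $|a_\tau|_{\BA}=q^{-\tau}$ we get $|x_i(\theta)|_{\BA}=q^{-\tau_i}$. Hence, for the simple roots listed in $\Delta$,
\[
|(x_i-x_{i+1})(\theta)|_{\BA}=q^{\tau_{i+1}-\tau_i}\quad(1\le i\le r-1),\qquad |(2x_r)(\theta)|_{\BA}=q^{-2\tau_r}.
\]
Thus the condition $|\alpha(\theta)|_{\BA}\le q^c$ for all $\alpha\in\Delta$ is exactly $\tau_{i+1}\le\tau_i+c$ for $1\le i\le r-1$ together with $-c/2\le\tau_r$.

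Next I would reorganize these inequalities. Setting $u_i=\tau_i+(r-i)c$, each inequality $\tau_{i+1}\le\tau_i+c$ is equivalent to $u_{i+1}\le u_i$, so the $r-1$ inequalities together say $u_1\ge u_2\ge\cdots\ge u_r$, that is, $\tau_1+(r-1)c\ge\tau_2+(r-2)c\ge\cdots\ge\tau_{r-1}+c\ge\tau_r$. Combined with $-c/2\le\tau_r$ this is precisely the displayed description of $\Theta(c)$, and specializing to $c=0$ yields the description of $\Theta(0)$.

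Finally, for the ``moreover'' part I would exhibit the translation explicitly. Recall that multiplication in $\Theta(T)$ is componentwise addition of exponents, $a_\sigma\cdot a_\tau=a_{\sigma+\tau}$. Writing $g=(a_{-(2r-1)c/2},\dots,a_{-3c/2},a_{-c/2})$, so that its $i$-th exponent is $s_i=-(2(r-i)+1)c/2$, I would observe that for $\theta=(a_{\tau_1},\dots,a_{\tau_r})$ one has $g^{-1}\theta=(a_{\sigma_1},\dots,a_{\sigma_r})$ with $\sigma_i=\tau_i+(2(r-i)+1)c/2$, and then verify directly that $\sigma_r=\tau_r+c/2\ge 0$ is equivalent to $-c/2\le\tau_r$, while $\sigma_{i+1}\le\sigma_i$ is equivalent to $\tau_{i+1}\le\tau_i+c$. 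Hence $\theta\in\Theta(c)$ if and only if $g^{-1}\theta\in\Theta(0)$, which gives the claimed identity $\Theta(c)=g\cdot\Theta(0)$.

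The argument presents no real obstacle beyond careful index bookkeeping; the only point demanding attention is the factor $2$ in the root $2x_r$, which produces the asymmetric bound $-c/2\le\tau_r$ and the half-integer component $a_{-c/2}$ of the shift $g$, so that one reads the identity for values of $c$ making these exponents integral (and hence $g$ an honest element of $\Theta(T)$).
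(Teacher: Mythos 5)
Your computation is correct and is precisely the routine verification that the paper omits (the lemma is stated with ``It is easy to verify'' and no proof): evaluating the simple roots $x_i-x_{i+1}$ and $2x_r$ on $\theta$ via $|a_\tau|_{\BA}=q^{-\tau}$ gives exactly $\tau_{i+1}\le\tau_i+c$ and $-c/2\le\tau_r$, which telescope into the stated chain and yield the translation identity. Your closing remark about the integrality of the exponents $-(2j-1)c/2$ (needed for the shift to be an honest element of $\Theta(T)$) is a fair caveat that the paper itself glosses over.
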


For $c\in \BR$, let
\[
T(c)=\{t\in T(\BA): |\alpha(t)|_{\BA}\leq q^c, \forall \alpha\in \Delta\}.
\]
Then it is easy to see that $T(c)=\Theta(c)\cdot T(\BA)^1$.

The fundamental set theorem in reduction theory (see for example \cite[pp. 211--212]{Springer 1994}) claims that there is a compact subset $C_0$ of $\CG^0(\BA)$ and a constant $c\in \BR$ such that
\[
\CG^0(\BA)=C_0\cdot T(c)\cdot P_0(\BA)^1\cdot \CG^0(k).
\]
Now since $\CG(\BA)/\CG^0(\BA)$ is compact (\cite[Prop. 3.2.1]{Conrad 2012}), there exists a compact subset $C_1$ of $\CG(\BA)$ such that $\CG(\BA)=C_1 \cdot \CG^0(\BA)$, whence
\[
\CG(\BA)=C_1 \cdot C_0\cdot T(c)\cdot P_0(\BA)^1 \cdot \CG^0(k).
\]
Taking $\CC=C_1 \cdot C_0$, which is a compact subset of $\CG(\BA)$, we obtain the fundamental set theorem for $\CG$:
\[
\CG(\BA)=\CC\cdot T(c)\cdot P_0(\BA)^1 \cdot \CG(k).
\]

Furthermore, we have the following results analogous to those in \cite[n. 10]{Weil 1965}.
\begin{lem}\label{Lem 2.3}
(i) There exists a compact subset $C$ of $\CG(\BA)$ such that
\[
\CG(\BA)=C\cdot \Theta(0)\cdot P_0(\BA)^1 \cdot \CG(k).
\]
(ii) Suppose $\CG$ is the isometry group of a split $\eta$-hermitian space $V$. Then there exists a compact subset $C_1$ of $\CG(\BA)$ such that
\[
\CG(\BA)=C_1 \cdot T(0)\cdot \CG(k).
\]
\end{lem}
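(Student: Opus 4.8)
The plan is to deduce both parts from the fundamental set theorem $\CG(\BA)=\CC\cdot T(c)\cdot P_0(\BA)^1\cdot\CG(k)$ established above, by absorbing every fixed, compact, or contracting factor into a single compact set. Throughout, I would write $P_0=M_0N_0$ with Levi $M_0=Z_{\CG^0}(T)$ and unipotent radical $N_0$.

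For part (i), I would first combine the identity $T(c)=\Theta(c)\cdot T(\BA)^1$ recorded above with the factorization of Lemma \ref{Lem 2.2}, which lets me write $\Theta(c)=\theta_c\cdot\Theta(0)$ for a single fixed element $\theta_c\in T(\BA)$. The key point is that $T(\BA)^1\subseteq P_0(\BA)^1$: since $T\subseteq P_0$, every $\lambda\in X_k(P_0)$ restricts to an element of $X_k(T)$, so $|\lambda(t)|_\BA=1$ for all $\lambda\in X_k(P_0)$ whenever $t\in T(\BA)^1$. As $T(\BA)$ is abelian and $P_0(\BA)^1$ is a group containing $T(\BA)^1$, I then obtain
\[
\CG(\BA)=\CC\cdot\theta_c\cdot\Theta(0)\cdot T(\BA)^1\cdot P_0(\BA)^1\cdot\CG(k)=\bigl(\CC\,\theta_c\bigr)\cdot\Theta(0)\cdot P_0(\BA)^1\cdot\CG(k),
\]
and taking $C=\CC\,\theta_c$, which is compact as the right translate of the compact set $\CC$ by the fixed element $\theta_c$, proves (i).

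For part (ii), I would start from (i) and apply the compactness theorem to write $P_0(\BA)^1=K_P\cdot P_0(k)$ with $K_P\subseteq P_0(\BA)^1$ compact; absorbing $P_0(k)$ into $\CG(k)$ gives $\CG(\BA)=C\cdot\Theta(0)\cdot K_P\cdot\CG(k)$. It then suffices to move $K_P$ to the left of $\Theta(0)$, that is, to show that
\[
K'=\{\theta p\theta^{-1}:\theta\in\Theta(0),\ p\in K_P\}
\]
is relatively compact. Granting this, the identity $\theta p=(\theta p\theta^{-1})\theta$ gives $\Theta(0)\cdot K_P\subseteq\overline{K'}\cdot\Theta(0)$, whence
\[
\CG(\BA)=\bigl(C\,\overline{K'}\bigr)\cdot\Theta(0)\cdot\CG(k)\subseteq C_1\cdot T(0)\cdot\CG(k)\subseteq\CG(\BA),
\]
with $C_1=C\,\overline{K'}$ and using $\Theta(0)\subseteq T(0)$, so that all inclusions are equalities and (ii) follows.

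The relative compactness of $K'$ is the crux of the argument and the main obstacle. Decomposing $p=mn$ according to $P_0=M_0N_0$, I would use that $T$ is central in $M_0$ to conclude $\theta m\theta^{-1}=m$, while in coordinates adapted to the filtration of $N_0$ by root subgroups each root coordinate of $n$ is scaled by $\beta(\theta)$ for the corresponding positive root $\beta$. Since $\theta\in\Theta(0)$ is supported only at $v_0$ and satisfies $|\beta(\theta)|_\BA\leq1$ for every positive root $\beta$ (this holds for the simple roots in $\Delta$ by the definition of $\Theta(0)$, hence for all positive roots), conjugation by $\theta$ fixes the $M_0$-part and contracts the $N_0$-part, so $\theta p\theta^{-1}$ stays in a fixed bounded region as $\theta$ and $p$ vary. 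It is essential here that one conjugates by the contracting cone $\Theta(0)$, concentrated at the single place $v_0$, rather than by the full $T(0)$: the factor $T(\BA)^1$ is neither contracting nor supported at one place, so it cannot be pushed past $K_P$, and this is precisely why the argument is routed through (i) and then upgraded via $\Theta(0)\subseteq T(0)$ at the very end. I note that this reasoning does not appear to use the splitness of $V$; that hypothesis is imposed because (ii) is applied only to the split group $G=U(W)$.
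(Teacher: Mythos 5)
Your proposal is correct. Part (i) is essentially identical to the paper's argument: the paper likewise passes from $T(c)\cdot P_0(\BA)^1$ to $\Theta(c)\cdot P_0(\BA)^1$ (i.e.\ absorbs $T(\BA)^1$ into $P_0(\BA)^1$) and then replaces $C$ by $C\cdot(a_{-(2r-1)c/2},\ldots,a_{-c/2})$ to reduce to $c=0$, exactly your $C=\CC\,\theta_c$.

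For part (ii) your route is genuinely different from, and more explicit than, the paper's. The paper's proof is a one-liner: it invokes the splitness of $V$ to take $P_0$ to be a Borel subgroup with Levi $T$, and then asserts that (ii) ``follows from (i)''; the step of converting $\Theta(0)\cdot P_0(\BA)^1$ into (compact)$\cdot T(0)\cdot \CG(k)$ is left implicit. You instead write $P_0(\BA)^1=K_P\cdot P_0(k)$ via the compactness theorem and push $K_P$ past $\Theta(0)$ by showing that $\bigcup_{\theta\in\Theta(0)}\theta K_P\theta^{-1}$ is relatively compact. That contraction statement is precisely the paper's Lemma \ref{Lem 2.4} (the analogue of Godement's lemma), which is stated and proved immediately \emph{after} Lemma \ref{Lem 2.3} with exactly your argument ($T$ centralizes the Levi $Z(T)$, and the root coordinates of the unipotent part are scaled by $\beta(\theta)$ with $|\beta(\theta)|_{v_0}\le 1$, all concentrated at the single place $v_0$); the paper then combines Lemmas \ref{Lem 2.3} and \ref{Lem 2.4} in just this way inside the proof of Lemma \ref{Lem 4}. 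So you have in effect rediscovered and inlined Lemma \ref{Lem 2.4}. Your closing observations are also sound and worth keeping: the contraction genuinely requires conjugating by the cone $\Theta(0)$ supported at $v_0$ rather than by all of $T(0)$ (an adelic bound $|\beta(t)|_{\BA}\le 1$ controls nothing locally), and your argument for (ii) does not actually use the splitness hypothesis, which in the paper serves only to identify the Levi of $P_0$ with $T$ in the one case where (ii) is applied.
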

\begin{proof}
(i) It is easy to check that
\[
T(c)\cdot P_0(\BA)^1=\Theta(c) \cdot P_0(\BA)^1.
\]
Thus it follows from the fundamental set theorem for $\CG$ that there is a compact subset $C$ of $\CG(\BA)$ and a constant $c$ such that
\[
\CG(\BA)=C\cdot \Theta(c)\cdot P_0(\BA)^1 \cdot \CG(k).
\]
Replacing $C$ with $C\cdot (a_{-(2r-1)c/2}, a_{-(2r-3)c/2}, \ldots, a_{-3c/2}, a_{-c/2})$, we may always assume that $c=0$.

(ii) In this case, $\CG^0$ is quasi-split, and we can take $P_0$ to be a Borel subgroup such that $T$ is its Levi subgroup. The desired result then follows from (i).
\end{proof}

Now we need the following result, which is an analogue of Lem. 1 on p. 217 of \cite{Godement 1963} in the number field case.
\begin{lem}\label{Lem 2.4}
If $C$ is a compact subset of $P_0(\BA)$, then the union of $\theta C \theta^{-1}$ for $\theta \in \Theta(0)$ is relatively compact in $P_0(\BA)$, i.e. its closure is compact in $P_0(\BA)$.
\end{lem}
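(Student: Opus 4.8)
The plan is to exploit the Levi decomposition of $P_0$ together with the fact that the elements $\theta\in\Theta(0)$ act by contraction on the unipotent radical. I would write $P_0=\CM\cdot U$, where $U$ is the unipotent radical of $P_0$ and $\CM=Z_{\CG^0}(T)$ is the Levi factor containing $T$; since $\CM$ is the centralizer of $T$, the split torus $T$ is central in $\CM$. The product map $\CM(\BA)\times U(\BA)\to P_0(\BA)$ is a homeomorphism, so the first step is to record that the given compact set $C$ lies in a product $C_{\CM}\cdot C_U$ with $C_{\CM}\subseteq\CM(\BA)$ and $C_U\subseteq U(\BA)$ compact (the images of the preimage of $C$ under the two coordinate projections).

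Next, because $\Theta(0)\subseteq T(\BA)$ and $T$ is central in $\CM$, conjugation by any $\theta\in\Theta(0)$ fixes $\CM(\BA)$ pointwise, so $\theta C_{\CM}\theta^{-1}=C_{\CM}$. Hence
\[
\bigcup_{\theta\in\Theta(0)}\theta C\theta^{-1}\subseteq C_{\CM}\cdot\bigcup_{\theta\in\Theta(0)}\theta C_U\theta^{-1},
\]
and the whole problem reduces to showing that $\bigcup_{\theta\in\Theta(0)}\theta C_U\theta^{-1}$ is relatively compact in $U(\BA)$; once this is established, the right-hand side is the image of a product of two compact sets under multiplication, hence relatively compact.

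For the core step I would use the root-subgroup structure of $U$. Choosing an ordering of the positive (restricted) roots $\alpha$ occurring in $U$ and writing elements of $U$ as ordered products $\prod_\alpha u_\alpha(X_\alpha)$, each root subgroup $U_\alpha$ is a vector group on which $T$ acts through the character $\alpha$, with $\theta u_\alpha(X)\theta^{-1}=u_\alpha(\alpha(\theta)X)$; scalar multiplication by $\alpha(\theta)$ makes sense here because the diagonal entries of $d(t)$ lie in $k^\times$ and are therefore central in $D$. In these coordinates conjugation by $\theta$ is the coordinatewise scaling $X_\alpha\mapsto\alpha(\theta)X_\alpha$. The key observation is that for $\theta\in\Theta(0)$ and every positive root $\alpha$ one has $|\alpha(\theta)|_{\BA}\le 1$: by the definition of $\Theta(0)$ this holds for the simple roots in $\Delta$, and an arbitrary positive $\alpha$ is a non-negative integral combination of them (Lemma \ref{Lem 2.2}). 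Moreover each $\alpha(\theta)$ is an idele of the form $a_s$ with $s\ge 0$, supported only at $v_0$.

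With this in hand I would finish by a direct compactness estimate. If $C_U\subseteq\prod_\alpha K_\alpha$ with each $K_\alpha$ compact, then scaling the $\alpha$-coordinate by $a_s$ with $s\ge 0$ leaves every component away from $v_0$ unchanged and multiplies the $v_0$-component by $\varpi^{s}\in O_{v_0}$, hence maps the $v_0$-ball containing $K_\alpha$ into itself and so carries $K_\alpha$ into a fixed compact set $\widetilde{K}_\alpha$ independent of $\theta$. Therefore $\bigcup_\theta\theta C_U\theta^{-1}\subseteq\prod_\alpha\widetilde{K}_\alpha$, which is compact, completing the argument. The only real obstacle is the contraction statement $|\alpha(\theta)|_{\BA}\le 1$ for all positive roots, together with the fact that these scalars are concentrated at $v_0$; everything else is the standard Levi decomposition and elementary topology. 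It is precisely the restriction to the dominant set $\Theta(0)$, rather than all of $\Theta(T)$, that guarantees contraction rather than expansion.
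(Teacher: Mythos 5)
Your proof is correct and follows essentially the same route as the paper: decompose $P_0$ as Levi times unipotent radical, observe that conjugation by $\theta\in\Theta(0)\subset T(\BA)$ fixes the Levi part and contracts each positive weight space since $|\alpha(\theta)|_{\BA}\le 1$, and use that the scaling is concentrated at $v_0$ where it lands in $O_{v_0}$. The only difference is cosmetic — you use root-subgroup coordinates where the paper uses a $T$-equivariant isomorphism of $\mathrm{Lie}(U)$ onto $U$ — and your phrase ``non-negative integral combination'' should really allow half-integers for the short roots $x_i$ when the system is non-reduced, which changes nothing in the estimate.
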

\begin{proof}
We follow Godement's method.

Note that $P_0=Z(T)\cdot U$, where $Z(T)$ (the centralizer of $T$ in $\CG^0$) is the Levi subgroup of $P_0$ and $U$ is the unipotent radical of $P_0$. Moreover, the Lie algebra $\text{Lie}(U)$ of $U$ is given by
\[
\text{Lie}(U)=\bigoplus_{\alpha \in \Phi^{+}}\mathfrak{g}_{\alpha},
\]
where $\Phi^{+}$ is the set of positive roots of $\CG^0$ relative to $T$, $\mathfrak{g}$ is the Lie algebra of $\CG^0$, and $\mathfrak{g}_{\alpha}=\{X\in \mathfrak{g}: \text{Ad}(t)X=\alpha(t)X, \forall t\in T\}$ is the root space. See \cite[p. 234]{Borel 1991}.

Take $p\in C$ and $\theta \in \Theta(0)$. Then $p$ and $\theta p \theta^{-1}$ are equal at any place other than $v_0$. Thus it suffices to assume everything is at the place $v_0$. Write $p=zu$, where $z$ belongs to a compact subset of $Z(T)_{v_0}$ and $u$ belongs to a compact subset of $U_{v_0}$. As $\theta p \theta^{-1}=z\cdot \theta u \theta^{-1}$, it suffices to consider $\theta u \theta^{-1}$. Note that similar to the exponential map in the characteristic-zero case,  there is a $T$-equivariant isomorphism $e$ of $\text{Lie}(U)$ onto $U$  (see \cite[p. 184]{Borel 1991}). Write $u=e(X)$, where $X\in \text{Lie}(U)=\oplus_{\alpha \in \Phi^{+}}\mathfrak{g}_{\alpha}$. It suffices to show that if $X$ stays in a fixed compact subset of $\text{Lie}(U)_{v_0}$, then so is $\text{Ad}(\theta)X$. It comes down to assuming $X\in \mathfrak{g}_{\alpha, v_0}$, and then $\text{Ad}(\theta)X=\alpha(\theta)X$. But $\alpha(\theta)$ is a monomial with positive integer exponents in $\alpha_i(\theta)$, where $\{\alpha_i\}\subset \Phi^{+}$ is the set of simple roots,  thus remains bounded on $\Theta(0)\cap T_{v_0}$. The desired result follows.
\end{proof}

Let $\dif g$ be a Haar measure on $\CG(\BA)$. To study the convergence of integrals on $\CG(\BA)/\CG(k)$, we will rely on the following lemma, which is an analogue of Lem. 4 on p. 18 of  \cite[n. 11]{Weil 1965}. Let $\Delta_{P_0}$ be the algebraic module of $P_0$. We write $\Theta^{+}=\Theta(0)$ and $P_0(\BA)^{+}=\Theta^{+}\cdot P_0(\BA)^1$.

\begin{lem}\label{Lem 4}
There exists a compact subset $C_0$ of $\CG(\BA)$ and a constant $\gamma >0$ such that
\begin{equation}\label{12}
\int_{\CG(\BA)/\CG(k)}|F(g)| \,\dif g\leq \gamma \int_{\Theta^{+}}F_0(\theta)\cdot |\Delta_{P_0}(\theta)|_{\BA}^{-1}\,\dif \theta
\end{equation}
whenever $F, F_0$ are locally integrable functions on $\CG(\BA)/\CG(k)$ and on $\Theta^{+}$ respectively, such that $|F(c \theta)|\leq F_0(\theta)$ for all $c \in C_0$ and $\theta \in \Theta^{+}$.
\end{lem}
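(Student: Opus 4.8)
The plan is to follow Weil's argument in \cite[n.~11]{Weil 1965}, combining the reduction-theoretic covering of Lemma \ref{Lem 2.3}(i) with an honest Haar-measure decomposition along the parabolic $P_0$. First I would construct the compact set $C_0$ and extract the pointwise input from the hypothesis. By Lemma \ref{Lem 2.3}(i) we have $\CG(\BA)=C\cdot\Theta^{+}\cdot P_0(\BA)^1\cdot\CG(k)$ for some compact $C$, and by the compactness theorem $P_0(\BA)^1=\Sigma\cdot P_0(k)$ for some compact $\Sigma\subseteq P_0(\BA)^1$; hence $\CG(\BA)=C\cdot\Theta^{+}\cdot\Sigma\cdot\CG(k)$. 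Writing a Siegel element as $c\,\theta\,\sigma=c\,(\theta\sigma\theta^{-1})\,\theta$ and invoking Lemma \ref{Lem 2.4} to see that $\Sigma':=\overline{\bigcup_{\theta\in\Theta^{+}}\theta\Sigma\theta^{-1}}$ is relatively compact, every such element is of the form $c_0\theta$ with $c_0\in C_0:=C\cdot\Sigma'$ compact and $\theta\in\Theta^{+}$. This $C_0$ is the compact set in the statement, and since $F$ is right $\CG(k)$-invariant the hypothesis yields $|F(g)|\le F_0(\theta)$ at every point of the Siegel set.

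It then remains to integrate this pointwise bound against $dg$, and this is where the modular factor must appear. I would \emph{not} use the crude covering estimate with a fixed compact fibre, since the Siegel set overcounts the quotient and its ambient volume is in general infinite; the sharp weight comes only from the genuine Haar measure. Using that $\CG(\BA)$ is unimodular while $\CG(\BA)/P_0(\BA)$ is compact, I would decompose $dg$ relative to $P_0(\BA)$ and then fibre $P_0(\BA)$ over $P_0(\BA)/P_0(\BA)^1\cong\Theta(T)$ via Lemma \ref{Lem 2.1}. Because $\Delta_{P_0}$ is a $k$-rational character, its adelic absolute value is trivial on $P_0(\BA)^1$, so the modular character $|\Delta_{P_0}(\cdot)|_{\BA}^{-1}$ survives only on the $\Theta(T)$-component. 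The concrete mechanism producing the weight is geometric: conjugation by $\theta$ scales the Haar measure of the unipotent radical by exactly
\[
\vol(\theta\,\Sigma_U\,\theta^{-1})=|\Delta_{P_0}(\theta)|_{\BA}^{-1}\,\vol(\Sigma_U),
\]
which is $\le 1$ for $\theta\in\Theta^{+}$ (consistent with Lemma \ref{Lem 2.4}) and is what makes the cusp integrable. Folding the fibre $P_0(\BA)^1$ by $P_0(k)$ and the left compact direction both contribute finite volumes, which I collect into $\gamma$; bounding $|F(c_0\theta)|\le F_0(\theta)$ and summing over the discrete set $\Theta^{+}$ against the surviving weight then yields \eqref{12}, with $\int_{\Theta^{+}}(\cdots)\,\dif\theta$ interpreted as the sum over $\Theta^{+}$.

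The main obstacle is precisely this measure bookkeeping. One must match the left- and right-invariant Haar normalizations on $P_0(\BA)$ and on $\CG(\BA)$, and verify that the factor appearing on $\Theta^{+}$ is the \emph{decaying} modular character $|\Delta_{P_0}(\theta)|_{\BA}^{-1}=\delta_{P_0(\BA)}(\theta)$ and not its reciprocal; the cleanest check is the contraction identity for $U(\BA)$ displayed above, which pins the factor down unambiguously and explains why the resulting sum over $\Theta^{+}$ converges when $F_0$ is of controlled growth. Once the weight is correctly identified, all remaining ingredients—compactness of $C_0$, of the left factor, and of $P_0(\BA)^1/P_0(k)$—enter only through bounded constants absorbed into $\gamma$, and \eqref{12} follows exactly as in \cite[n.~11]{Weil 1965}.
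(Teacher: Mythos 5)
Your proposal is correct and follows essentially the same route as the paper: Lemma \ref{Lem 2.3}(i) for the Siegel-set covering, the Bourbaki quasi-invariant decomposition of Haar measure along $P_0(\BA)$ with the right-invariant measure $\dif'p=|\Delta_{P_0}(\theta)|_{\BA}^{-1}\dif\theta\,\dif p_1$, compactness of $\CG(\BA)/P_0(\BA)$ and of $P_0(\BA)^1/P_0(k)$, and Lemma \ref{Lem 2.4} to conjugate the compact fibre past $\Theta^{+}$. The only differences are organizational (you assemble $C_0$ up front, whereas the paper produces it as $C_1\theta_0C_3$ at the end of the measure bookkeeping), and your sign check on the modular factor via the unipotent volume contraction agrees with the paper's convention $\delta_{P_0(\BA)}=|\Delta_{P_0}|_{\BA}^{-1}$.
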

\begin{proof}
By Lemma \ref{Lem 2.3}, there exists a compact subset $C$ of $\CG(\BA)$ such that $\CG(\BA)=C\cdot P_0(\BA)^{+} \cdot \CG(k)$. Denote by $I$ the first member of (\ref{12}), and by $\varphi_N$ the characteristic function of the set $N=C\cdot P_0(\BA)^{+}$. We have
\begin{equation}\label{13}
I \leq \int_{N/P_0(k)}|F(g)|\,\dif g=\int_{\CG(\BA)/P_0(k)}|F(g)|\varphi_N(g)\,\dif g.
\end{equation}
We will transform the last integral by means of the theory of quasi-invariant measures on homogeneous spaces (see \cite{Bourbaki 1963}, Chap. VII, $\S 2$, n. 5-8). According to this theory, we can construct a continuous function $h$ on $\CG(\BA)$, everywhere $>0$, such that $h(gp)=h(g)|\Delta_{P_0}(p)|_{\BA}$ for all $g\in \CG(\BA)$, $p\in P_0(\BA)$, and then a positive measure $\lambda$ on $\CG(\BA)/P_0(\BA)$ such that for every locally integrable function $f\geq 0$ on $\CG(\BA)/P_0(k)$, we have:
\[
\int_{\CG(\BA)/P_0(k)}f(g)\,\dif g=\int_{\CG(\BA)/P_0(\BA)}\left(h(g)\int_{P_0(\BA)/P_0(k)}f(gp)\,\dif 'p \right)\,\dif \lambda(\dot{g}),
\]
where $\dot{g}$ is the image of $g\in \CG(\BA)$ in $\CG(\BA)/P_0(\BA)$ and  $\dif 'p=|\Delta_{P_0}(\theta)|_{\BA}^{-1}\dif \theta~\dif p_1$ is the right invariant measure on $P_0(\BA)=\Theta(T)\cdot P_0(\BA)^1$, where $\dif \theta$, $\dif p_1$ are Haar measures on $\Theta(T)$ and on $P_0(\BA)^1$ respectively. Applying this formula to the last member of (\ref{13}), we obtain
\[
I \leq \int_{\CG(\BA)/P_0(\BA)}\Psi(\dot{g})\,\dif \lambda(\dot{g}),
\]
where $\Psi$ is the function defined by
\[
\Psi(\dot{g})=h(g)\int_{P_0(\BA)/P_0(k)}|F(gp)|\varphi_N(gp)\,\dif 'p.
\]
Since $\CG(\BA)/P_0(\BA)$ is compact, there is a compact subset $C_1$ of $\CG(\BA)$ such that $\CG(\BA)=C_1 \cdot P_0(\BA)$. We can therefore assume that $g\in C_1$ in the second member of the above formula. But then we have $\varphi_N(gp)=0$ when $p\notin C_1^{-1}N$. Put
\[
Q=C_1^{-1}N\cap P_0(\BA)=(C_1^{-1}C\cap P_0(\BA))\cdot P_0(\BA)^{+}=(C_1^{-1}C\cap P_0(\BA))\cdot \Theta^{+} \cdot P(\BA)^1;
\]
let $\gamma_1$ be the supremum of $h$ on $C_1$, and let $F_1(p)$, for each $p\in P_0(\BA)$, be the supremum of $|F(gp)|$ for $g\in C_1$. Therefore
\[
\Psi(\dot{g})\leq \gamma_1 \int_{Q/P_0(k)}|F(gp)|\,\dif 'p \leq \gamma_1 \int_{Q/P_0(k)} F_1(p)\,\dif 'p,
\]
and consequently, since $\CG(\BA)/P_0(\BA)$ is compact, we have
\[
I \leq \gamma_2 \int_{Q/P_0(k)} F_1(p)\,\dif 'p
\]
provided that the constant $\gamma_2$ is suitably chosen.

By Lemma \ref{Lem 2.1}, we can identify $P_0(\BA)/P_0(\BA)^1$ with $\Theta(T)$. Then it is immediate that every compact subset of $\Theta(T)$ is contained in a set of the form $\theta_0 \Theta^{+}$, with $\theta_0 \in \Theta(T)$. Applying this remark to the image of $C_1^{-1}C\cap P_0(\BA)$ in $P_0(\BA)/P_0(\BA)^1=\Theta(T)$, we conclude that there exists $\theta_0\in \Theta(T)$ such that $Q$ is contained in $\theta_0 \Theta^{+}\cdot P_0(\BA)^1$. On the other hand, since $P_0(\BA)^1/P_0(k)$ is compact by the compactness theorem, there exists a compact subset $C_2$ of $P_0(\BA)^1$ such that  $P_0(\BA)^1=C_2 \cdot P_0(k)$, so we obtain $Q\subset \theta_0 \Theta^{+}\cdot C_2 \cdot P_0(k)$, and consequently
\[
I\leq \gamma_2 \int_{\theta_0 \Theta^{+}\cdot C_2}F_1(p)\,\dif 'p.
\]
Since $\dif 'p=|\Delta_{P_0}(\theta)|_{\BA}^{-1}\dif \theta~\dif p_0$, this can also be written as
\[
I\leq \gamma_2 \int_{C_2} \left(\int_{\Theta^{+}}F_1(\theta_0 \theta p_0)\cdot |\Delta_{P_0}(\theta_0 \theta)|_{\BA}^{-1}\,\dif \theta \right)\,\dif p_0.
\]
Let $C_3$ be the closure of the union of $\theta C_2 \theta^{-1}$ for $\theta \in \Theta^{+}$, which is a compact subset of $P_0(\BA)^1$ by Lemma \ref{Lem 2.4}. Note that $\theta C_2 \theta^{-1}\subset C_3$, and therefore $\theta_0 \theta C_2\subset \theta_0 C_3\theta$ for any $\theta \in \Theta^{+}$. Thus if we denote by $F_2(\theta)$, for any $\theta \in \Theta^{+}$, the supremum of $F_1(p\theta)$ for $p\in \theta_0 C_3$, then we obtain
\[
I\leq \gamma \int_{\Theta^{+}}F_2(\theta)\cdot |\Delta_{P_0}(\theta)|_{\BA}^{-1} \,\dif \theta
\]
provided that the constant $\gamma$ is suitably chosen. It follows that the assertion of the lemma is verified if we take $C_0=C_1 \theta_0 C_3$.
\end{proof}

Now let $X$ be an affine space on which $\CG$ acts via a representation $\rho$ of $\CG$ in $\Aut(X)$. For every  character $\lambda$ of $T$, we denote by $m_{\lambda}$ the dimension over $k$ of the space of vectors $a\in X_k$ such that $\rho(t)a=\lambda(t)a$ for any $t\in T$. The characters $\lambda$ of $T$ for which $m_{\lambda}>0$ are the weights of the representation $\rho$; $m_{\lambda}$ is the multiplicity of the weight $\lambda$.

We have the following analogue of Lem. 5 on p. 20 of \cite[n. 12]{Weil 1965}.
\begin{lem}\label{Lem 5}
Let $\rho$ be a representation of $\CG$ in the group $\Aut(X)$ of automorphisms of an affine space $X$. Then the integral
\begin{equation}
I(\Phi)=\int_{\CG(\BA)/\CG(k)}\sum_{\xi \in X(k)}\Phi(\rho(g)\xi)\cdot \dif g  \label{14}
\end{equation}
is absolutely convergent for any function $\Phi \in \CS(X(\BA))$ whenever the integral
\[
\int_{\Theta^{+}} \prod_{\lambda} \sup(1, |\lambda(\theta)|_{\BA}^{-1})^{m_{\lambda}}\cdot |\Delta_{P_0}(\theta)|_{\BA}^{-1}\, \dif \theta
\]
is convergent, where $\lambda$ runs over the weights of $\rho$; and when this is so, $I(\Phi)$ defines a positive tempered measure $I$. Here we follow the convention on \cite[p. 3]{Weil 1965} and identify a positive tempered distribution on $X(\BA)$ with a positive measure on $X(\BA)$.
\end{lem}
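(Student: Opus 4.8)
The plan is to reduce the convergence of the adelic integral (\ref{14}) to the explicit one-variable criterion over $\Theta^{+}$ by invoking Lemma \ref{Lem 4}, and the main work will be to produce a suitable dominating function $F_0$ on $\Theta^{+}$ from a given $\Phi \in \CS(X(\BA))$. First I would set $F(g)=\sum_{\xi \in X(k)}\Phi(\rho(g)\xi)$ and observe that, since $\Phi$ is a finite linear combination of products of local Schwartz-Bruhat functions, it suffices to treat $\Phi=\prod_v \Phi_v$ with $\Phi_v$ the characteristic function of a lattice $L_v\subset X(k_v)$ at almost all $v$. To apply Lemma \ref{Lem 4} I must exhibit a compact $C_0\subset \CG(\BA)$ and a function $F_0$ on $\Theta^{+}$ with $|F(c\theta)|\le F_0(\theta)$ for all $c\in C_0$, $\theta\in \Theta^{+}$, and then bound $F(c\theta)$ uniformly in $c$.

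The key estimate is to control $\sum_{\xi\in X(k)}|\Phi(\rho(c\theta)\xi)|$ for $c$ in a fixed compact set and $\theta\in\Theta^{+}$. Decomposing $X_k=\bigoplus_{\lambda} X_{\lambda}$ into weight spaces under $T$, write $\xi=\sum_\lambda \xi_\lambda$; then $\rho(\theta)\xi=\sum_\lambda \lambda(\theta)\xi_\lambda$, and since $\theta$ is supported only at $v_0$, the components away from $v_0$ are unchanged while at $v_0$ the $\lambda$-component is scaled by $\lambda(\theta)_{v_0}=\varpi^{\langle\lambda,\,\cdot\rangle}$. Because $c$ ranges over a compact set, $\Phi(\rho(c\theta)\xi)$ is bounded and is nonzero only for $\xi$ whose $v_0$-components lie in $\lambda(\theta)^{-1}$ times a fixed compact set; counting the rational lattice points $\xi_\lambda$ that survive in each weight space contributes a factor comparable to $\sup(1,|\lambda(\theta)|_{\BA}^{-1})^{m_\lambda}$, since $m_\lambda=\dim_k X_\lambda$ governs the number of coordinates scaled by $\lambda(\theta)$. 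Taking the product over all weights $\lambda$ and absorbing the bounded factors into a constant yields
\[
|F(c\theta)|\le F_0(\theta):=\gamma'\prod_{\lambda}\sup(1,|\lambda(\theta)|_{\BA}^{-1})^{m_\lambda},
\]
which is exactly the integrand (up to the module factor) of the hypothesized convergent integral.

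With this dominating function in hand, Lemma \ref{Lem 4} gives
\[
\int_{\CG(\BA)/\CG(k)}|F(g)|\,\dif g\le \gamma\int_{\Theta^{+}}F_0(\theta)\,|\Delta_{P_0}(\theta)|_{\BA}^{-1}\,\dif\theta,
\]
and the right-hand side is finite precisely by the assumed convergence of $\int_{\Theta^{+}}\prod_\lambda \sup(1,|\lambda(\theta)|_{\BA}^{-1})^{m_\lambda}\,|\Delta_{P_0}(\theta)|_{\BA}^{-1}\,\dif\theta$; this proves absolute convergence of $I(\Phi)$. Finally, to see that $\Phi\mapsto I(\Phi)$ defines a positive tempered measure, I would note positivity is immediate when $\Phi\ge 0$, and that the same bound, applied to a family of $\Phi$ lying in a compact subset of $\CS(X(\BA))$, shows $|I(\Phi)|$ is dominated by a Schwartz seminorm of $\Phi$, giving continuity and hence temperedness in the sense fixed on \cite[p. 3]{Weil 1965}.

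The main obstacle I anticipate is the lattice-point counting step: making precise, uniformly in the compact parameter $c$, that the number of surviving rational points $\xi$ in weight space $X_\lambda$ is $O\!\big(\sup(1,|\lambda(\theta)|_{\BA}^{-1})^{m_\lambda}\big)$. One must be careful that $\theta$ acts nontrivially only at $v_0$ while $\Phi$ imposes integrality conditions at all places, so the count is really a count of $k$-rational vectors in a fixed global lattice meeting a $v_0$-adically dilated box; controlling the cross-interaction between the $v_0$-scaling and the product structure of $\Phi$ over the remaining places, and confirming that the exponent is the multiplicity $m_\lambda$ rather than something larger, is the delicate point. Everything else — the reduction to pure tensors, the use of Lemma \ref{Lem 4}, and the temperedness argument — is routine.
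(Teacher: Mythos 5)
Your proposal is correct and follows essentially the same route as the paper: reduce via Lemma \ref{Lem 4} to an integral over $\Theta^{+}$, dominate $\Phi(\rho(c)\,\cdot\,)$ uniformly for $c$ in the compact set $C_0$ by a single factorizable Schwartz--Bruhat function, decompose $X_k$ into weight spaces, and count the surviving rational points to extract the factor $\prod_\lambda \sup(1,|\lambda(\theta)|_{\BA}^{-1})^{m_\lambda}$, with temperedness coming from uniform convergence on compact subsets of $\CS(X(\BA))$. The ``delicate point'' you flag is resolved in the paper exactly as you anticipate: after rescaling a weight basis so that the relevant rational points lie in its $O_k$-span, the choice of $v_0$ above the infinite place forces $|x|_{v_0}\ge 1$ for every nonzero integral coordinate, and the rapid decay of the $v_0$-component of the dominating function then yields the bound with exponent exactly $m_\lambda$.
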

\begin{proof}
If $I(\Phi)$ is absolutely convergent for any function $\Phi \in \CS(X(\BA))$, then Lem. 5 on p. 194 of \cite[n. 41]{Weil 1964} shows that $I(\Phi)$ converges uniformly on every compact subset of $\CS(X(\BA))$, whence it follows, according to Lem. 2 on p. 5 of \cite[n. 2]{Weil 1965}, that $I$ is a tempered distribution, therefore a positive tempered measure. Now let $C_0$ be a compact subset of $\CG(\BA)$ with the property stated in Lemma \ref{Lem 4} above. For $\Phi \in \CS(X(\BA))$,
there exists, according to Lem. 5 of \cite[n. 41]{Weil 1964}, a function $\Phi_1\in \CS(X(\BA))$ such that
\[
|\Phi(\rho(c)x)|\leq \Phi_1(x)
\]
for all $c\in C_0$ and $x\in X(\BA)$. Applying Lemma \ref{Lem 4} to (\ref{14}) then shows that $I(\Phi)$ is absolutely convergent provided that this is so for the integral
\[
I_1=\int_{\Theta^{+}}\sum_{\xi \in X(k)}\Phi_1(\rho(\theta)\xi)\cdot |\Delta_{P_0}(\theta)|_{\BA}^{-1}\,\dif \theta.
\]

We write $X(\BA)=X_{v_0}\times X'$. By the definition of $\CS(X(\BA))$ (see \cite[n. 29]{Weil 1964}), we can assume that $\Phi_1$ is of the form
\[
\Phi_1(x)=\Phi_{v_0}(x_{v_0})\Phi'(x'),
\]
where $x_{v_0}, x'$ are the projections of $x\in X(\BA)$ on $X_{v_0}$ and on $X'$, with $\Phi_{v_0}\in \CS(X_{v_0})$, $\Phi'$ being the characteristic function of a compact open subgroup of $X'$. Let $L$ be the set of $\xi \in X(k)$ whose projection on $X'$ belongs to the support of $\Phi'$. Then $I_1$ can be written as:
\[
I_1=\int_{\Theta^{+}}\sum_{\xi \in L}\Phi_{v_0}(\rho(\theta)\xi)\cdot |\Delta_{P_0}(\theta)|_{\BA}^{-1}\, \dif \theta.
\]

For every weight $\lambda$ of $\rho$, let $X_{\lambda}$ be the subspace of $X_k$, of dimension $m_{\lambda}$ over $k$, formed of eigenvectors of the weight $\lambda$, i.e. vectors $a$ such that $\rho(t)a=\lambda(t)a$ for $t\in T$. Then $X_k$ is the direct sum of $X_{\lambda}$.

Let
\[
(a_{\lambda i})_{1\leq i \leq m_{\lambda}}
\]
be a basis of $X_{\lambda}$ over $k$; replacing $a_{\lambda i}$ by $N^{-1}a_{\lambda i}$ if needed, where $N$ is a suitable element in $O_k$, we may assume that $L$ is contained in the $O_k$-submodule of $X_k$ generated by all the $a_{\lambda i}$. The $a_{\lambda i}$ also form a basis of $X_{v_0}$ over $k_{v_0}$; for $x_{v_0}\in X_{v_0}$, we can thus write
\[
x_{v_0}=\sum_{\lambda, i}x_{\lambda i}a_{\lambda i}
\]
with $x_{\lambda i}\in k_{v_0}$; then, if $\alpha >1$, there is a constant $C$ such that
\[
\Phi_{v_0}(x_{v_0})\leq C \prod_{\lambda, i}(1+|x_{\lambda i}|_{v_0}^{\alpha})^{-1}.
\]

On the other hand, under these conditions, we have
\[
\rho(\theta)x_{v_0}=\sum_{\lambda, i}\lambda(\theta) x_{\lambda i}a_{\lambda i},
\]
where $\lambda(\theta)\in k_{v_0}^{\times}$; and, if $x_{v_0}$ is the projection on $X_{v_0}$ of an element $\xi$ of $L$, then all the $x_{\lambda i}$ are elements in $O_k$ by the choice of the basis $(a_{\lambda i})$. By the choice of $v_0$, we have $|x_{\lambda i}|_{v_0}\geq 1$. Thus we have
\[\begin{aligned}
\sum_{\xi \in L}\Phi_{v_0}(\rho(\theta)\xi)
& \leq C \prod_{\lambda}\left( \sum_{n\geq 0}\frac{1}{1+|\lambda(\theta)|_{v_0}^{\alpha}q^{n\alpha}} \right)^{m_{\lambda}}\\
& \leq C'\prod_{\lambda}\sup(1,|\lambda(\theta)|_{v_0}^{-1})^{m_{\lambda}},
\end{aligned}\]
where $C'$ is a suitable constant.
If we observe that $|\lambda(\theta)|_{\BA}=|\lambda(\theta)|_{v_0}$ for any character $\lambda$ of $T$ and for any $\theta \in \Theta^{+}$, we see that this gives the announced conclusion.
\end{proof}

Finally, we have the following analogue of Lem. 6 on p. 22 of \cite[n. 13]{Weil 1965}. Recall that we have fixed a place $v_0$ of $k$ such that the residue field of $k_{v_0}$ is $\BF_q$, and we denote by $a_{\tau}$, for $\tau \in \BZ$, the idele $(a_v)$ given by $a_v=\varpi^\tau$ for $v=v_0$, and $a_v=1$ for any other place $v$, where $\varpi$ is a uniformizer at $v_0$.

\begin{lem}\label{Lem 6}
 Let $(X^{(\alpha)})_{1\leq \alpha \leq n}$ and $Y$ be vector spaces over $k$; let $X=\prod_{\alpha}X^{(\alpha)}$, and let $p$ be a morphism of $X$ into $Y$, rational over $k$ and such that $p(0,x^{(2)}, \ldots, x^{(n)})=0$ for any $x^{(2)}, \ldots, x^{(n)}$. Let $C_0$ be a compact subset of $\CS(X(\BA))$, and let $N\geq 0$. Then there exists a function $\Phi_0 \in \CS(X(\BA))$ such that
\[
|a_{\tau_1}|_{\BA}^N |\Phi(a_{\tau_1}x^{(1)}, \ldots, a_{\tau_n}x^{(n)})|=q^{-\tau_1 N}|\Phi(a_{\tau_1}x^{(1)}, \ldots, a_{\tau_n}x^{(n)})|\leq \Phi_0(x)
\]
whenever $\Phi \in C_0$, $\tau_1 \leq 0, \ldots, \tau_n\leq 0$, $x=(x^{(1)}, \ldots, x^{(n)})\in X(\BA)$, $p(x)\in Y(k)$ and $p(x)\neq 0$.
\end{lem}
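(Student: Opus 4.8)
The plan is to follow Weil's strategy in \cite[n.\ 13]{Weil 1965}, while exploiting the simplification peculiar to the function-field setting: the distinguished place $v_0$ is non-archimedean, so every local Schwartz--Bruhat function at $v_0$ is locally constant with compact support. As in the proof of Lemma \ref{Lem 5}, I would first reduce everything to the place $v_0$. Writing $X(\BA)=X_{v_0}\times X'$ and using the compactness of $C_0$ together with Lem.\ 5 of \cite[n.\ 41]{Weil 1964}, there is a single nonnegative $\Phi_1\in\CS(X(\BA))$ of product form $\Phi_1(x)=\Phi_{v_0}(x_{v_0})\Phi'(x')$ with $|\Phi|\leq\Phi_1$ for all $\Phi\in C_0$, where $\Phi_{v_0}$ is bounded by $B\cdot\mathbf{1}_{\Lambda}$ for a lattice $\Lambda=\prod_{\alpha}\Lambda_{\alpha}\subset X_{v_0}$ and a constant $B=\sup\Phi_{v_0}$, and $\Phi'$ is the characteristic function of a compact open subgroup $L'\subset X'$. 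Since $a_{\tau}$ has trivial component away from $v_0$, the scaling $x^{(\alpha)}\mapsto a_{\tau_{\alpha}}x^{(\alpha)}$ only alters the $v_0$-component; thus $\Phi'(x')$ is unaffected, and it suffices to dominate $q^{-\tau_1 N}\Phi_{v_0}(\varpi^{\tau_1}x^{(1)}_{v_0},\dots,\varpi^{\tau_n}x^{(n)}_{v_0})$ by a Schwartz--Bruhat function of $x_{v_0}$ alone, for $x$ subject to $x'\in L'$, $p(x)\in Y(k)$ and $p(x)\neq 0$.

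The heart of the matter is to show that the a priori unbounded factor $q^{-\tau_1 N}$ is in fact bounded under these constraints. Whenever the above value of $\Phi_{v_0}$ is nonzero, each $\varpi^{\tau_{\alpha}}x^{(\alpha)}_{v_0}$ lies in $\Lambda_{\alpha}$; as $\tau_{\alpha}\leq 0$ gives $\varpi^{-\tau_{\alpha}}\in O_{v_0}$, this forces $x^{(\alpha)}_{v_0}\in\Lambda_{\alpha}$, and in particular $|x^{(1)}_{v_0}|_{v_0}\leq R\,q^{\tau_1}$ for a constant $R$ depending only on $\Lambda_1$. It therefore remains to bound $|x^{(1)}_{v_0}|_{v_0}$ from below, and this is where the two hypotheses on $p(x)$ enter.

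To produce the lower bound, set $y=p(x)\in Y(k)\setminus\{0\}$ and fix a $k$-basis of $Y$, writing $|y|_v=\max_j|y_j|_v$. For $v\neq v_0$ the point $x_v$ lies in the fixed compact set $L'_v$, is $v$-integral for almost all $v$, and $p$ is a $k$-morphism with $v$-integral coefficients for almost all $v$; hence $\prod_{v\neq v_0}|y|_v\leq c_1$ for a finite constant $c_1$. Choosing a nonzero coordinate $y_i\in k^{\times}$ of $y$ and combining the product formula $\prod_v|y_i|_v=1$ with $|y_i|_v\leq|y|_v$ yields $|y|_{v_0}\geq c_1^{-1}=:c_0>0$. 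On the other hand, since $p(0,x^{(2)},\dots,x^{(n)})=0$, each coordinate of $p$ lies in the ideal generated by the coordinate functions of $X^{(1)}$, so on the bounded region $\Lambda$ one has a Lipschitz estimate $|y|_{v_0}=|p(x_{v_0})|_{v_0}\leq C_1\,|x^{(1)}_{v_0}|_{v_0}$ with $C_1$ depending only on $p$ and $\Lambda$. Combining gives $|x^{(1)}_{v_0}|_{v_0}\geq c_0/C_1$, and comparing with $|x^{(1)}_{v_0}|_{v_0}\leq R\,q^{\tau_1}$ forces $q^{-\tau_1}\leq C_1 R/c_0$. Hence $q^{-\tau_1 N}\leq(C_1 R/c_0)^N=:K$ is bounded by a constant independent of $\tau$ and of $\Phi\in C_0$, and I would simply take $\Phi_0(x)=KB\,\mathbf{1}_{\Lambda}(x_{v_0})\,\Phi'(x')$, which is Schwartz--Bruhat.

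The main obstacle is the lower bound $|p(x)|_{v_0}\geq c_0$: the delicate point is to extract a uniform positive lower bound at $v_0$ for the varying nonzero rational point $y=p(x)$, which I would obtain by balancing the product formula against the integrality of $y$ at every place other than $v_0$, the latter guaranteed by the compactness of $L'$ and the $v$-integrality of the coefficients of $p$ for almost all $v$. Everything else is bookkeeping; I emphasize that, in contrast to Weil's archimedean estimate, no genuine decay of $\Phi_{v_0}$ is required here, since compact support at the non-archimedean place $v_0$ already bounds $\tau_1$ from below.
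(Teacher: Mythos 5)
Your proof is correct, but it takes a genuinely different route from the paper's. The paper transcribes Weil's archimedean argument essentially verbatim: it proves the polynomial bound $q^{-\tau_1}\leq C'\,r(\varpi^{\tau}x_{v_0})^{d/2}$, establishes the positive lower bound on $s(p(x_{v_0}))$ by a sequential compactness argument resting on the discreteness of $Y(k)-\{0\}$ in $Y(\BA)$, and then absorbs the unbounded factor $q^{-\tau_1 N}$ into the rapid decay of $\Phi_{v_0}$ via the suprema $a_i=\sup r^i\Phi_{v_0}$ and the auxiliary function $\varphi\in\CS(\BR)$ supplied by Lem.\ 4 of \cite{Weil 1964}. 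You instead exploit that $v_0$ is non-archimedean, so $\Phi_{v_0}$ has compact support in a lattice $\Lambda$: together with your Lipschitz estimate $|p(x_{v_0})|_{v_0}\leq C_1|x^{(1)}_{v_0}|_{v_0}$ on $\Lambda$ (the non-archimedean counterpart of the paper's $s(p(x_{v_0}))\leq C\,r_1(x_{v_0}^{(1)})\,r(x_{v_0})^{d-1}$) and the lower bound $|p(x)|_{v_0}\geq c_0$, this shows $q^{-\tau_1}$ is outright bounded on the set where the left-hand side is nonzero, so no decay machinery is needed and $\Phi_0$ can be taken to be a constant multiple of $\mathbf{1}_{\Lambda}\otimes\Phi'$. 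Your replacement of the compactness/discreteness argument by the product formula (using integrality of $p(x_v)$ for $x'$ in the fixed compact open subgroup $L'$ at almost all $v\neq v_0$) is also sound and has the advantage of producing an explicit constant $c_0$. What your approach buys is brevity and transparency in the function-field setting; what the paper's buys is uniformity with Weil's original proof (it would survive even if $\Phi_{v_0}$ were only rapidly decreasing rather than compactly supported) and hence a presentation strictly parallel to \cite[n.\ 13]{Weil 1965}. One small point worth making explicit in a write-up: $\Lambda$ should be chosen as a compact open $O_{v_0}$-submodule of product form, so that $\tau_{\alpha}\leq 0$ and $\varpi^{\tau_{\alpha}}x^{(\alpha)}_{v_0}\in\Lambda_{\alpha}$ really do force $x^{(\alpha)}_{v_0}\in\Lambda_{\alpha}$; this is always possible since the support of $\Phi_{v_0}$ is compact.
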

\begin{proof}
Note that a morphism of an affine space into another is just a polynomial mapping. Thus if we choose bases of $X$ and of $Y$ over $k$, then the coordinates of $p(x)$ can be expressed as polynomials with coefficients in $k$ by means of those of $x$. We denote by $d$ the largest degree of these polynomials. On the other hand, write $X(\BA)=X_{v_0}\times X'$, and likewise write $X(\BA)^{(\alpha)}=X_{v_0}^{(\alpha)}\times X'^{(\alpha)}$ and $Y(\BA)=Y_{v_0}\times Y'$; $p$ determines in an obvious way mappings of $X_{v_0}$ into $Y_{v_0}$ and of $X'$ into $Y'$. Choose bases of $X_{v_0}^{(\alpha)}$ and of $Y_{v_0}$ over $k_{v_0}$, and, for $x_{v_0}^{(\alpha)}\in X_{v_0}^{(\alpha)}$ (resp. $y_{v_0} \in Y_{v_0}$), denote by $r_{\alpha}(x_{v_0}^{(\alpha)})$ (resp. $s(y_{v_0})$) the sum of the squares of the absolute value of the coordinates of $x_{v_0}^{(\alpha)}$ (resp. of $y_{v_0}$) with respect to these bases. For $x_{v_0}=(x_{v_0}^{(1)}, \ldots, x_{v_0}^{(n)})\in X_{v_0}$, put
\[
r'(x_{v_0})=\sum_{\alpha \geq 2}r_{\alpha}(x_{v_0}^{(\alpha)}), \quad r(x_{v_0})=r_1(x_{v_0}^{(1)})+r'(x_{v_0}).
\]
Since $p(x)$ vanishes whenever $x^{(1)}=0$, there is a constant $C>0$ such that for any $x_{v_0}\in X_{v_0}$:
\[
s(p(x_{v_0}))\leq C \cdot r_1(x_{v_0}^{(1)}) \cdot r(x_{v_0})^{d-1},
\]
and consequently, for $t_1\geq 1$:
\[
s(p(x_{v_0}))\leq C t_1^{-2}(t_1^2 r_1(x_{v_0}^{(1)})+r'(x_{v_0}))^d.
\]
For $\tau=(\tau_1, \ldots, \tau_n)$, $\tau_i \in \BZ$, and $x_{v_0}\in X_{v_0}$, let
\[
\varpi^{\tau} x_{v_0}=(\varpi^{\tau_1} x_{v_0}^{(1)}, \ldots, \varpi^{\tau_n} x_{v_0}^{(n)});
\]
the inequality which we have obtained shows that, whenever $\tau_1\leq 0$, $\ldots$, $\tau_n\leq 0$:
\[
s(p(x_{v_0}))\leq C q^{-2\tau_1} r(\varpi^{\tau} x_{v_0})^d.
\]
Now, if we apply Lem. 5 on p. 194 of \cite[n. 41]{Weil 1964}, then this shows that we can choose $\Phi_1 \in \CS(X(\BA))$ such that $|\Phi(x)|\leq \Phi_1(x)$ for all $\Phi \in C_0$ and all $x=(x_{v_0}, x')\in X(\BA)$, and likewise we can assume that $\Phi_1$ is of the form
\[
\Phi_1(x)=\Phi_{v_0}(x_{v_0}) \Phi'(x'),
\]
where $\Phi_{v_0}\in \CS(X_{v_0})$ and $\Phi'$ is the characteristic function of a compact open subgroup of $X'$. Let $\mathcal{E}$ be the set of points $x=(x_{v_0}, x')$ of $X(\BA)$ such that $p(x)\in Y(k)$, $p(x)\neq 0$ and $\Phi'(x')\neq 0$; we will show that, on $\mathcal{E}$, $s(p(x_{v_0}))$ has an infimum $\epsilon>0$. In fact, if it is not so, there will be a sequence of points $x_{\nu}=(x_{\nu v_0}, x_{\nu}')$ of $\mathcal{E}$ such that the sequence $p(x_{\nu v_0})$ tends to 0 in $Y_{v_0}$. As the support of $\Phi'$ is compact, we can assume at the same time that the sequence $x_{\nu}'$ tends to a limit $\bar{x}'$, therefore that $p(x_{\nu}')$ tends to $p(\bar{x}')$. But then the sequence of points $y_{\nu}=p(x_{\nu})$ tends to a limit $\bar{y}$ in $Y(\BA)$, for which we have $\bar{y}_{v_0}=0$. As the points $y_{\nu}$ belong to $Y(k)-\{0\}$, which is discrete in $Y(\BA)$, we have $\bar{y}\in Y(k)$, $\bar{y}\neq 0$, therefore $\bar{y}_v\neq 0$ for any $v$, whence the contradiction. Taking into account the inequality proved above, we thus have, for $x\in \mathcal{E}$, $\tau_1\leq 0$, $\ldots$, $\tau_n\leq 0$:
\[
q^{-\tau_1} \leq C' r(\varpi^{\tau} x_{v_0})^{d/2}
\]
with $C'=(C/\epsilon)^{1/2}$.

Now, for any $i\geq 0$, put
\[
a_i=\sup_{x_{v_0}\in X_{v_0}}(r(x_{v_0})^i \Phi_{v_0}(x_{v_0})).
\]
Let $M\geq Nd/2$ be an integer. According to Lem. 4 on p. 193 of \cite[n. 41]{Weil 1964}, there exists $\varphi \in \CS(\BR)$ such that we have, for any $r\in \BR$:
\[
\varphi(x)\geq \inf_{i\geq 0} (a_{M+i}|r|^{-i}).
\]
For $x$ and $\tau$ as above, we thus have, for any $i\geq 0$:
\[\begin{aligned}
q^{-2 \tau_1 M/d} \Phi_{v_0}(\varpi^{\tau} x_{v_0})
&\leq C'^{2M/d} r(\varpi^{\tau} x_{v_0})^M \Phi_{v_0}(\varpi^{\tau} x_{v_0})\\
& \leq C'^{2M/d} a_{M+i} ~r(\varpi^{\tau} x_{v_0})^{-i} \leq C'^{2M/d} a_{M+i} ~r(x_{v_0})^{-i},
\end{aligned}\]
and therefore
\[
q^{-\tau_1 N} \Phi_{v_0}(\varpi^{\tau} x_{v_0})\leq C'^{2M/d} \varphi(r(x_{v_0})).
\]
Thus the conditions of the lemma will be satisfied by setting
\[
\Phi_0(x)=C'^{2M/d} \varphi(r(x_{v_0})) \Phi'(x').
\]
\end{proof}

\section{Siegel Eisenstein series}\label{Section 4}
Recall $G$ is the isometry group of a split space $W$ and $P$ is the Siegel parabolic subgroup of $G$. For $\Phi \in \CS(X(\BA))$ and $s\in \BC$, define the {\sl Siegel Eisenstein series} on $\wt{G(\BA)}$ by
\[
E(g, s,\Phi)=\sum_{\gamma \in P(k)\bs G(k)}f_{\Phi}^{(s)}(\gamma g), \quad \quad \forall g\in \wt{G(\BA)},
\]
where $f_{\Phi}^{(s)}(g)=|a(g)|^{s-s_0}\omega(g)\Phi(0)$ and $s_0=\alpha(m-n+1-2\epsilon)/2$. Here $|a(g)|$ is defined in Section \ref{Section 2}, and $\alpha^2$ is the dimension of the division algebra $D$ over its center $K$.

Similar to the number field case, we are interested in the behavior of $E(g,s,\Phi)$ at $s_0$.

Let $P=NM$ be the Levi decomposition, where $N$ is the unipotent radical and $M\cong \Res_{D/k}GL_n$ is the Levi subgroup.

Let $\BG_m=GL_1$. Let $T\cong (\BG_m)^n$ be the maximal split torus in $G$ given by
\[
T=\{t=(t_1, \ldots, t_n): t_i\in \BG_m\},
\]
where $t=(t_1, \dots, t_n)$ means that $t=\mathrm{diag}(t_1, \ldots, t_n)\in \Res_{D/k}GL_n\cong M$.

Let $Z_M$ be the center of $M$. Then $Z_M\cong \mathrm{Res}_{D/k}\BG_m$. We write an element $z$ of $Z_M$ as the form $z=(z_1, \ldots, z_1)$ with $z_1 \in \mathrm{Res}_{D/k}\BG_m$.

Let $\Delta$ be the set of simple roots of $G$ relative to $T$. Then it is well-known that
\[
\Delta=\{x_i-x_{i+1}: 1\leq i \leq n-1\}\cup \{2x_n\}.
\]
where $x_i(t)=t_i$, $(x_i-x_{i+1})(t)=t_i t_{i+1}^{-1}$ and $(2x_n)(t)=t_n^2$ for $t=(t_1, \ldots, t_n)\in T$. See for example \cite[p. 76]{Weil 1965}.

For a subset $I$ of $\Delta$, there is a parabolic subgroup $P_{I}$ defined by $I$ as follows. Let $\Phi^{+}$ be the set of positive roots of $G$ relative to $T$, let $[I]$ be the set of roots which are linear combinations of elements in $I$ and set $\Psi(I)=\Phi^{+}-[I]$. Let $T_{I}$ be the identity component of $\cap_{\alpha \in I} \ker(\alpha)$, and let $M_{I}=Z_G(T_{I})$ be the centralizer of $T_{I}$ in $G$. Then $[I]=\Phi(T, M_I)$ is the set of roots of $M_I$ relative to $T$. Let $U_{\Psi(I)}$ be the unipotent subgroup defined by Prop. 21.9 on p. 232 of \cite{Borel 1991}, whose Lie algebra is $\sum_{\alpha\in \Psi(I)}\mathfrak{g}_{\alpha}$. Then $P_I=U_{\Psi(I)}M_{I}$. See \cite[p. 97]{Morris 1982} or \cite[p. 234]{Borel 1991}.

Note that $P_{\emptyset}\subset P_I \subset P_{\Delta}=G$, where $P_{\emptyset}=P_0$ is a minimal parabolic, and the proper maximal parabolic subgroups are defined by subsets of the form $\Delta-\{\alpha\}$.

For the Siegel parabolic subgroup $P$, it is easy to check the following.
\begin{lem}
$P$ is defined by $\Delta-\{2x_n\}$.
\end{lem}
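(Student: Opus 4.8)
The plan is to identify the subset $I = \Delta - \{2x_n\} = \{x_i - x_{i+1} : 1 \le i \le n-1\}$ and to verify directly that the parabolic $P_I = U_{\Psi(I)} M_I$ attached to $I$ by the recipe recalled above coincides with the Siegel parabolic $P = NM$. Since the construction of $P_I$ proceeds through $T_I$, then $M_I = Z_G(T_I)$, and finally $\Psi(I) = \Phi^{+} - [I]$, I would carry out these three computations in turn.

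First I would compute the torus $T_I$. As $(x_i - x_{i+1})(t) = t_i t_{i+1}^{-1}$, the condition $t \in \cap_{\alpha \in I} \ker(\alpha)$ forces $t_1 = t_2 = \cdots = t_n$; hence $T_I$ is the identity component of $\{(z, \ldots, z) : z \in \BG_m\}$, which is precisely the center $Z_M \cong \Res_{D/k}\BG_m$ of the Levi $M$. Next I would identify $M_I = Z_G(T_I)$ with $M$. The center $Z_M$ acts on the polarization $W = W^{+} \oplus W^{-}$, where $W^{\pm}$ are the two maximal totally isotropic subspaces on which $M$ acts through $a$ and $(a^{*})^{-1}$, by the scalar $z$ on $W^{+}$ and by $\bar{z}^{-1}$ on $W^{-}$. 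For generic $z$ these are distinct eigenvalues, so any $g \in G$ commuting with all of $T_I$ must preserve both $W^{+}$ and $W^{-}$ and therefore lie in $M$; conversely $M$ centralizes its own center. Thus $M_I = M$.

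Then I would determine the root data. The root system of $G$ relative to $T$ is of type $C_n$, with
\[
\Phi^{+} = \{x_i - x_j : i < j\} \cup \{x_i + x_j : i < j\} \cup \{2x_i : 1 \le i \le n\},
\]
and the roots lying in the $\BZ$-span of $I$ are exactly $[I] = \{x_i - x_j : i \ne j\}$, the roots of the $\Res_{D/k}GL_n$-type Levi $M$, consistent with $M_I = M$. Consequently $\Psi(I) = \Phi^{+} - [I] = \{x_i + x_j : i < j\} \cup \{2x_i : 1 \le i \le n\}$, and these are precisely the positive roots whose root spaces $\mathfrak{g}_{\alpha}$ span $\Lie(N)$ (note that $N$ is abelian here, since $n(b_1)n(b_2) = n(b_1 + b_2)$, matching the fact that all these root spaces commute). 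Hence $U_{\Psi(I)} = N$, and $P_I = U_{\Psi(I)} M_I = NM = P$.

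The main obstacle is the identification $Z_G(T_I) = M$: one must check that no element of $G$ outside the Levi centralizes the whole torus $T_I = Z_M$. This is the standard fact that for a maximal parabolic the centralizer of the center of its Levi is the Levi itself, and here it follows cleanly from the eigenspace argument above, once one observes that $Z_M$ acts with distinct characters on $W^{+}$ and $W^{-}$. Everything else is a routine bookkeeping of the type-$C_n$ roots against the block structure of $N$ and $M$, which is why the statement can be asserted as easily checked.
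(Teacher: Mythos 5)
Your verification is correct and is exactly the routine check the paper leaves implicit (it offers no proof, saying only that the lemma is ``easy to check''): one computes $T_I$, identifies $M_I=Z_G(T_I)$ with $M$ via the eigenspace decomposition $W=W^{+}\oplus W^{-}$, and matches $\Psi(I)=\{x_i+x_j: i<j\}\cup\{2x_i\}$ with the weights $x_i+x_j$ of the entries $b_{ij}$ of $b\in\Her_n$, so that $U_{\Psi(I)}=N$ and $P_I=NM=P$. One small imprecision: since $T_I$ is by definition a subtorus of the split torus $T$, it is $\{(z,\ldots,z):z\in\BG_m\}\cong\BG_m$, which is only the maximal split subtorus of $Z_M\cong\Res_{D/k}\BG_m$ when $D\neq k$; this does not affect your centralizer argument, which uses only that $T_I$ acts by the distinct characters $z$ and $z^{-1}$ on $W^{+}$ and $W^{-}$.
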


Let $X_M(\BR)$ be the group of quasi-characters of $M(\BA)\cong GL_n(D_{\BA})$ into $\BR^{+}$, where $\BR^{+}$ is the set of positive real numbers.
Then $X_M(\BR)\cong \BR$, where we identify $\alpha \in X_M(\BR)$ with $r\in \BR$ if $\alpha(g)=|\nu(g)|_{\BA}^{r}$ for $g\in M(\BA)=GL_n(D_{\BA})$.

Recall that for a character $\alpha$ of $T$ and a cocharacter $\beta^{*}$ of $T$, there is a pairing $(\alpha, \beta^{*})\in \BZ$ defined by
\[
\beta^{*}(\alpha(x))=x^{(\alpha, \beta^{*})}, \quad \forall x\in GL_1.
\]
See \cite[p. 115]{Borel 1991}. This pairing can be extended to a pairing
\[
(\,, \,): X(T)_{\BR} \times X_{*}(T)_{\BR} \ra \BR,
\]
where $X(T)$ is the group of characters of $T$, $X_{*}(T)$ is the group of cocharacters of $T$, $X(T)_{\BR}:=X(T)\otimes_{\BZ}\BR$, etc.

In particular, for a root $\alpha$, the corresponding coroot $\alpha^*$ is defined to be the cocharacter of $T$ such that $(\alpha, \alpha^*)=2$.

Recall the (open) Weyl chamber $C_{P_I}$ associated to a parabolic subgroup $P_I$ defined by a subset $I \subset \Delta$ is given by
\[
C_{P_I}=\{\beta \in X_M(\BR): (\beta, \alpha^*)>0, \forall \alpha \in \Delta-I \},
\]
where $\alpha^*$ is the coroot corresponding to $\alpha$, and we identify an element $\beta=r$ of  $X_M(\BR)=\BR$ with an element of $X(T)_{\BR}$ which sends $t=(t_1, \ldots, t_n)\in T$ to $(t_1 \cdots t_n)^r$ if $r\in \BZ$.  See for example line 10 on p. 118 of \cite{Morris 1982}.

\begin{lem}
Identifying $X_M(\BR)$ with $\BR$. The Weyl chamber $C_P$ associated to the Siegel parabolic subgroup $P$ is given by
\[
C_P=\{r\in \BR: r> 0\}.
\]
\end{lem}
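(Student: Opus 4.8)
The plan is to unwind the definition of the Weyl chamber directly. By the previous lemma, $P = P_I$ for $I = \Delta - \{2x_n\}$, so the set $\Delta - I$ over which the defining inequalities of $C_{P_I}$ range consists of the single simple root $2x_n$. Hence $C_P$ is cut out by the one condition $(\beta, (2x_n)^*) > 0$, and the whole problem reduces to evaluating this single pairing after the identification $X_M(\BR) = \BR$.

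First I would determine the coroot $(2x_n)^*$. Writing $e_i^*$ for the cocharacter $x \mapsto \diag(1, \ldots, x, \ldots, 1)$ of $T \cong (\BG_m)^n$ with $x$ in the $i$-th slot, I claim $(2x_n)^* = e_n^*$: composing the root $2x_n$ with $e_n^*$ sends $x$ to $x^2$, so the pairing equals $2$, which is exactly the normalization $(\alpha, \alpha^*) = 2$ required of a coroot. This is simply the statement that $2x_n$ is the long root of the type-$C_n$ root system $\Delta$, whose coroot is the short coweight $e_n^*$.

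Next I would evaluate the pairing on a general element $\beta = r \in X_M(\BR) = \BR$. Under the identification fixed in the text, $\beta = r$ corresponds to the character $t = (t_1, \ldots, t_n) \mapsto (t_1 \cdots t_n)^r$ of $T$. Composing this character with the cocharacter $e_n^*$ gives $x \mapsto x^r$, so $(\beta, (2x_n)^*) = r$. The defining inequality therefore reads $r > 0$, which yields $C_P = \{r \in \BR : r > 0\}$ as claimed.

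There is no serious obstacle here; it is a one-line root-system computation once the previous lemma is invoked. The only point demanding care is the bookkeeping between the pairing $(\alpha, \beta^*)$, read off from the exponent in the composition of the character $\alpha$ with the cocharacter $\beta^*$, and the normalization built into the identification of $X_M(\BR)$ with $\BR$ via $|\nu(\cdot)|_{\BA}$. Once these conventions are matched so that the single coordinate $r$ corresponds to the weight $r(x_1 + \cdots + x_n)$, the inequality $(\beta, (2x_n)^*) > 0$ collapses to $r > 0$.
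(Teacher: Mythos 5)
Your proposal is correct and follows the same route as the paper's own proof: identify the coroot of $2x_n$ as the cocharacter $x\mapsto(1,\ldots,1,x)$ via the normalization $(\alpha,\alpha^*)=2$, then compute $(\beta,\alpha^*)=r$ for $\beta=r$ acting as $t\mapsto(t_1\cdots t_n)^r$. No discrepancies.
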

\begin{proof}
Recall $P$ is defined by $\Delta-\{2x_n\}$. For $\alpha=2x_n$, $\alpha^*$ is given by $\alpha^*(x)=(1, \ldots, 1, x)$ for $x\in GL_1$, since $\alpha^*$ satisfies $(\alpha, \alpha^*)=2$, i.e. $\alpha(\alpha^*(x))=x^2$. For $\beta=r\in X_M(\BR)=\BR$, $(\beta, \alpha^*)=r$, since $\beta(\alpha^*(x))=\beta(1,\ldots, 1, x)=x^{r}$.
\end{proof}

Recall that the modular character $\delta_{P(\BA)}$ of $P(\BA)$ can be expressed as
\[
\delta_{P(\BA)}(p)=|\Delta_P(p)|_{\BA}^{-1},
\]
where $\Delta_P$ is the algebraic module of $P$ (see the end of Section \ref{Section 2}).

\begin{lem}
The modular character $\delta_{P(\BA)}$ of $P(\BA)$ is given by
\[
\delta_{P(\BA)}(p)=|a(p)|^{\alpha(n+2\epsilon-1)},
\]
where $\alpha^2$ is the dimension of $D$ over its center.
In particular,
\[
\delta_{P(\BA)}(z)=|\nu(z)|_{\BA}^{\alpha(n+2\epsilon-1)}
\]
for $z\in Z_M(\BA)=GL_n(D_{\BA})$.
\end{lem}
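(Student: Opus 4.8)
The plan is to identify $\delta_{P(\BA)}$ with the determinant of the adjoint action of the Levi on the unipotent radical, and then to compute that determinant as a power of the reduced norm by a homogeneity argument. First I would record the conjugation action of $M$ on $N$: a direct matrix computation gives
\[
m(a)\,n(b)\,m(a)^{-1}=n(a\,b\,a^{*}),\qquad a\in GL_n(D),\ b\in\Her_n,
\]
so that $\Ad(m(a))$ acts on $\Lie N\cong\Her_n$ by the $k$-linear map $\phi_a\colon b\mapsto aba^{*}$. Since $M\cong\Res_{D/k}GL_n$ is reductive, $\Ad(m(a))$ has determinant $1$ on $\Lie M$ (left multiplication by $a$ and right multiplication by $a^{-1}$ contribute $\nu(a)^{n\alpha}$ and $\nu(a)^{-n\alpha}$), and the algebraic module $\Delta_P$ is trivial on the unipotent radical $N$ (a connected unipotent group admits no nontrivial character to $\BG_m$). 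Hence, by the definition of $\Delta_P$ and the relation $\delta_{P(\BA)}(p)=|\Delta_P(p)|_{\BA}^{-1}$ recorded at the end of Section~\ref{Section 2}, everything reduces to computing $\det{}_k\phi_a$, the determinant of $\phi_a$ as an endomorphism of the $k$-vector space $\Her_n$.

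Next I would observe that $\phi_{a_1a_2}=\phi_{a_1}\circ\phi_{a_2}$, so $a\mapsto\det{}_k\phi_a$ is a $k$-rational character of $\Res_{D/k}GL_n$ valued in $\BG_m$. Every such character is a power of the reduced norm $\nu=N_{K/k}\circ\nu_K$ (by Dieudonn\'e's theorem the commutator subgroup of $GL_n(D)$ is the reduced-norm-one group), so $\det{}_k\phi_a=\nu(a)^{c}$ for an integer $c\geq 0$ independent of $a$, and it remains only to pin down $c$.

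To find $c$ I would compare the effect of scaling $a$ by $t\in k^{\times}$. Counting off-diagonal pairs (each contributing $\delta=\dim_k D$) and diagonal entries (each contributing $\delta'=\epsilon\delta$) gives
\[
\dim{}_k\Her_n=\tfrac{n(n-1)}{2}\,\delta+n\,\delta'=\tfrac{\delta n}{2}\,(n-1+2\epsilon).
\]
Since $\phi_{ta}=t^{2}\phi_a$ for $t\in k^{\times}$ (as $\bar t=t$), we get $\det{}_k\phi_{ta}=t^{2\dim_k\Her_n}\det{}_k\phi_a$, whereas $\nu(ta)=t^{[K:k]\,n\alpha}\nu(a)$; comparing exponents yields $2\dim{}_k\Her_n=c\,[K:k]\,n\alpha$. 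Using $\delta=[K:k]\,\alpha^{2}$ this becomes $[K:k]\,\alpha^{2}n(n-1+2\epsilon)=c\,[K:k]\,n\alpha$, so $c=\alpha(n+2\epsilon-1)$. Therefore $\Delta_P(m(a))=\nu(a)^{-\alpha(n+2\epsilon-1)}$ (the inverse exponent arising because Weil's conjugation $x\mapsto a^{-1}xa$ sends $b$ to $a^{-1}b(a^{*})^{-1}$), and consequently, for $p=n(b_0)m(a)$,
\[
\delta_{P(\BA)}(p)=|\Delta_P(p)|_{\BA}^{-1}=|\nu(a)|_{\BA}^{\alpha(n+2\epsilon-1)}=|a(p)|^{\alpha(n+2\epsilon-1)},
\]
since $\Delta_P$ is trivial on $N$ and $|a(p)|=|\nu(a)|_{\BA}$ depends only on the Levi component; the displayed special case for $z\in Z_M(\BA)$ follows by taking $a=z$. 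As a sanity check, for $D=k$, $\eta=1$ (so $\alpha=1$, $\epsilon=1$) this recovers the classical value $|\det a|^{n+1}$ for the Siegel parabolic of $\Sp_{2n}$, and for $\epsilon=0$ the value $|\det a|^{n-1}$ for split $O(n,n)$.

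The main obstacle is the bookkeeping that fixes the exponent and sign: one must compute $\dim_k\Her_n$ correctly in terms of $\delta$ and $\epsilon$, track the homogeneity degree of $\nu$ through the norm $N_{K/k}$ uniformly across the cases $K=k$ and $[K:k]=2$, and invoke the fact that a $k$-rational character of $\Res_{D/k}GL_n$ must be a power of $\nu$. Once these are in place the rest is routine. (Alternatively, one could compute $\det_k\phi_a$ on all of $M_n(D)=\Her_n\oplus\Her_n^{-}$, where it equals $\nu(a)^{2n\alpha}$, and split off the $\Her_n$-factor; in the case $\epsilon=\tfrac12$ multiplication by a trace-zero element of $K$ identifies the two summands $\Ad(m(a))$-equivariantly, but the homogeneity argument above has the advantage of treating all values of $\epsilon$ at once.)
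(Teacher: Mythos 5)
Your proof is correct and follows the same underlying route as the paper: the paper simply cites Weil's Lemma 12 and the formula on p.~48 of \cite{Weil 1965}, which together say precisely that $\Delta_P(n(b)m(a))$ is the inverse of the module of the map $b\mapsto aba^{*}$ on $\Her_n$ and that this module equals $\nu(a)^{\alpha(n+2\epsilon-1)}$. You have merely carried out that computation explicitly (the conjugation formula, the reduction to a power of $\nu$ via the character group of $\Res_{D/k}GL_n$, and the homogeneity argument fixing the exponent through $\dim_k\Her_n=\tfrac{\delta n}{2}(n-1+2\epsilon)$), and all the bookkeeping, including the sign coming from $x\mapsto a^{-1}xa$, checks out.
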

\begin{proof}
For $p=n(b)m(a)\in P$, it follows from Lem. 12 on p. 43 of \cite{Weil 1965} that $\Delta_P(p)=\Delta(a)^{-1}$. But $\Delta(a)=\nu(a)^{\alpha(n+2\epsilon-1)}$ by the formula on p. 48 of \cite{Weil 1965}. The desired result follows.
\end{proof}

\begin{lem}
For $z$ in the center of $\wt{M(\BA)}$ and $g\in \wt{G(\BA)}$, we have
\[
f_{\Phi}^{(s)}(zg)=\lambda(z)f_{\Phi}^{(s)}(g),
\]
where
\[
\lambda(z)=\chi(z)|a(z)|^{s+\frac{\alpha(n+2\epsilon-1)}{2}}.
\]

In particular, the real part $\Re(\lambda)$ of $\lambda$ is given by
\[
\Re(\lambda)(z)=|a(z)|^{\Re(s)+\frac{\alpha(n+2\epsilon-1)}{2}}.
\]
\end{lem}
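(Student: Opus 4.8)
The plan is to reduce the identity to two independent multiplicative computations—one for the Iwasawa factor $|a(\cdot)|^{s-s_0}$ and one for the Weil-representation factor $\omega(\cdot)\Phi(0)$—then to multiply them and simplify the resulting exponent using the definition $s_0=\alpha(m-n+1-2\epsilon)/2$. Throughout I write $z=m(a_0)$ with $a_0=z_1\cdot 1_n$ a scalar matrix, matching the notation $z=(z_1,\ldots,z_1)$ for the center $Z_M$.

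First I would handle the Iwasawa factor. Writing $g=n(b)m(a)g_1$ as in Section \ref{Section 2}, and using that $M$ normalizes $N$ via the direct matrix computation $m(a_0)n(b)m(a_0)^{-1}=n(a_0 b a_0^*)$, I obtain $zg=n(a_0 b a_0^*)\,m(a_0 a)\,g_1$, so that $a(zg)=a_0 a$. By multiplicativity of the reduced norm $\nu$ this gives $|a(zg)|=|\nu(a_0)|_{\BA}\,|\nu(a)|_{\BA}=|a(z)|\,|a(g)|$, whence $|a(zg)|^{s-s_0}=|a(z)|^{s-s_0}\,|a(g)|^{s-s_0}$.

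Next I would treat the Weil-representation factor. Since $\omega$ is a representation of $\wt{G(\BA)}$, I write $\omega(zg)\Phi(0)=\omega(z)\bigl(\omega(g)\Phi\bigr)(0)$ and apply the explicit formula $\omega(m(a_0))\Psi(x)=\chi(m(a_0))\,|\nu(a_0)|_{\BA}^{\alpha m/2}\,\Psi(x a_0)$ at $x=0$. Because $0\cdot a_0=0$, this yields $\omega(zg)\Phi(0)=\chi(z)\,|a(z)|^{\alpha m/2}\,\omega(g)\Phi(0)$, where I use $|a(z)|=|\nu(a_0)|_{\BA}$ and $\chi(z)=\chi(\nu_K(a_0))$. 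Combining with the previous paragraph gives $f_{\Phi}^{(s)}(zg)=\chi(z)\,|a(z)|^{\,s-s_0+\alpha m/2}\,f_{\Phi}^{(s)}(g)$, and the short arithmetic $-s_0+\alpha m/2=\tfrac{\alpha}{2}\bigl(-(m-n+1-2\epsilon)+m\bigr)=\alpha(n+2\epsilon-1)/2$ collapses the exponent to the asserted $s+\alpha(n+2\epsilon-1)/2$. The statement on $\Re(\lambda)$ is then immediate: since $\chi$ is a unitary Hecke character we have $|\chi(z)|=1$, and as $|a(z)|$ is a positive real and $\alpha(n+2\epsilon-1)/2$ is real, $\bigl||a(z)|^{s+\alpha(n+2\epsilon-1)/2}\bigr|=|a(z)|^{\Re(s)+\alpha(n+2\epsilon-1)/2}$.

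The computation is essentially formal, and indeed nothing above used centrality of $z$ beyond packaging $\chi(z)$ as a character; the only point deserving care is the metaplectic case $\epsilon=1$, $m$ odd, where $\wt{M(\BA)}$ is a genuine double cover and $z$ carries an extra $\{\pm1\}$-component. There I would use the extended definition $\chi((m(a_0),z))=z\cdot\chi(\det a_0)\cdot\prod_v \gamma_v(\det a_{0,v},\psi_v)^{-1}$ together with the matching action $\omega((m(a_0),z))\Psi(x)=\chi((m(a_0),z))\,|\det a_0|_{\BA}^{m/2}\,\Psi(x a_0)$. The Weil-index factors are precisely those absorbed into $\chi$, so the same cancellation occurs and the final formula is unchanged (now with $\alpha=1$). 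This is the one spot where I expect to have to argue rather than merely compute.
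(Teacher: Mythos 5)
Your proof is correct and is essentially the paper's own argument written out in full: the paper simply remarks that the lemma is ``an application of the formulas for the Weil representation,'' recording the transformation law $f_{\Phi}^{(s)}(pg)=\chi(p)|a(p)|^{s+\frac{\alpha(n+2\epsilon-1)}{2}}f_{\Phi}^{(s)}(g)$ for all $p\in\wt{P(\BA)}$ (i.e.\ $f_{\Phi}^{(s)}\in \mathrm{Ind}_{\wt{P(\BA)}}^{\wt{G(\BA)}}(\chi|\cdot|^{s})$), and your two multiplicative computations (the Iwasawa factor via $m(a_0)n(b)m(a_0)^{-1}=n(a_0ba_0^{*})$, and $\omega(m(a_0))\Phi(0)$) together with the exponent arithmetic $-s_0+\alpha m/2=\alpha(n+2\epsilon-1)/2$ are exactly the details behind that remark. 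Your closing observation that centrality of $z$ is not really used is precisely the point of the paper's more general statement for $p\in\wt{P(\BA)}$.
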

\begin{proof}
This is just an application of the formulas for the Weil representation in Section \ref{Section 2}. Note that $f_{\Phi}^{(s)}(pg)=\chi(p)|a(p)|^{s+\frac{\alpha(n+2\epsilon-1)}{2}}f_{\Phi}^{(s)}(g)$ for $p\in \wt{P(\BA)}$ and  $g\in \wt{G(\BA)}$, i.e. $f_{\Phi}^{(s)}\in \mathrm{Ind}_{\wt{P(\BA)}}^{\wt{G(\BA)}}(\chi|\cdot|^s)$ (normalized induction).
\end{proof}

Now we can prove the following analogue of Thm. 1 on p. 57 of \cite[n. 40]{Weil 1965}.
\begin{thm}\label{Thm 1}
If $\Re(s)>\alpha (n+2\epsilon-1)/2$, then for any $g\in \wt{G(\BA)}$, the series $E(g,s,\Phi)$ is absolutely convergent for all $\Phi \in \CS(X(\BA))$, and uniformly in $\Phi$ on every compact subset of $\CS(X(\BA))$. In particular, if $m> 2n+4\epsilon-2$, then $E(g,s,\Phi)$ is holomorphic at $s_0=\alpha(m-n+1-2\epsilon)/2$.
\end{thm}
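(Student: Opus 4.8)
\emph{Strategy.} The plan is to deduce the theorem from Godement's convergence criterion in the function-field setting, as made available in \cite{Morris 1982}, once $f_\Phi^{(s)}$ has been recognized as a standard section and majorized by a positive one. The preceding lemmas already supply the structural input: $f_\Phi^{(s)}$ transforms under $\wt{P(\BA)}$ by $f_\Phi^{(s)}(pg)=\chi(p)|a(p)|^{s+\alpha(n+2\epsilon-1)/2}f_\Phi^{(s)}(g)$, so that $f_\Phi^{(s)}\in\Ind_{\wt{P(\BA)}}^{\wt{G(\BA)}}(\chi|\cdot|^{s})$ in the normalized sense; moreover $\delta_{P(\BA)}(p)=|a(p)|^{\alpha(n+2\epsilon-1)}$, whence $\rho_P$ corresponds to $\alpha(n+2\epsilon-1)/2$ under $X_M(\BR)\cong\BR$, and the Weyl chamber of $P$ is $C_P=\{r\in\BR:r>0\}$.

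\emph{Step 1: majorization.} First I would dominate the absolute value of the section by a positive standard section, uniformly in $\Phi$ over a compact subset of $\CS(X(\BA))$. Writing $\gamma g=n(b)m(a)g_1$ by the Iwasawa decomposition and using the explicit Weil-representation formulas of Section \ref{Section 2}, one computes $\omega(\gamma g)\Phi(0)=\chi(m(a))|\nu(a)|_{\BA}^{\alpha m/2}\,(\omega(g_1)\Phi)(0)$ with $g_1\in G(O_{\BA})$ (in the metaplectic case an extra factor of modulus one is harmless). Hence $|f_\Phi^{(s)}(\gamma g)|=|\nu(a)|_{\BA}^{\Re(s)+\alpha(n+2\epsilon-1)/2}\,|(\omega(g_1)\Phi)(0)|$, the exponent matching the Levi transformation law above. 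Since $G(O_{\BA})$ is compact, $\{\omega(g_1)\Phi:g_1\in G(O_{\BA})\}$ stays in a compact subset of $\CS(X(\BA))$, so Weil's majorization lemma (Lem. 5 of \cite[n. 41]{Weil 1964}) produces a single $\Phi_1\in\CS(X(\BA))$, depending only on the compact family, bounding $|(\omega(g_1)\Phi)(0)|$. Thus $|f_\Phi^{(s)}|$ is dominated by a positive function transforming under $\wt{P(\BA)}$ by the real exponent $\Re(s)+\alpha(n+2\epsilon-1)/2$, and the bound is locally uniform in $s$.

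\emph{Step 2: the criterion.} Next I would invoke Godement's criterion from \cite{Morris 1982}: the Eisenstein series attached to a positive standard section on the maximal parabolic $P$ converges absolutely and locally uniformly as soon as the real exponent by which the section transforms under $\wt{P(\BA)}$ exceeds that of $\delta_{P(\BA)}$ along the coroot of the unique simple root $2x_n$ outside the Levi — equivalently, as soon as the normalized parameter lies strictly inside $\rho_P+C_P$. With $\delta_{P(\BA)}(p)=|a(p)|^{\alpha(n+2\epsilon-1)}$ and $C_P=\{r>0\}$ as recalled above, this reads $\Re(s)+\alpha(n+2\epsilon-1)/2>\alpha(n+2\epsilon-1)$, i.e. $\Re(s)>\alpha(n+2\epsilon-1)/2$. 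Because $\Phi_1$ was chosen uniformly over the compact family, the convergence is uniform in $\Phi$ on every compact subset of $\CS(X(\BA))$.

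\emph{Step 3: holomorphy.} Finally, holomorphy at $s_0$ follows formally: each term $f_\Phi^{(s)}(\gamma g)=|a(\gamma g)|^{s-s_0}\,\omega(\gamma g)\Phi(0)$ is entire in $s$, and on any compact subset of the half-plane $\Re(s)>\alpha(n+2\epsilon-1)/2$ the series converges uniformly by Steps 1--2, so its sum is holomorphic there by Morera's theorem. The elementary equivalence $s_0=\alpha(m-n+1-2\epsilon)/2>\alpha(n+2\epsilon-1)/2$ if and only if $m>2n+4\epsilon-2$ places $s_0$ inside this half-plane when $m>2n+4\epsilon-2$, giving holomorphy at $s_0$. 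I expect the only genuine difficulty to lie in Step 1 — converting the non-cuspidal, Schwartz-built section $f_\Phi^{(s)}$ into a positive standard section to which the chamber form of Godement's criterion applies, uniformly in $\Phi$; once that majorization is secured, the threshold is dictated purely by the computations of $\delta_{P(\BA)}$ and $C_P$ already carried out.
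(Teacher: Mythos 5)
Your proposal is correct and follows essentially the same route as the paper: both reduce the theorem to Godement's convergence criterion in the function-field setting (Morris 1982, and Morris 1983 / Moeglin--Waldspurger for the metaplectic case), with the threshold $\Re(s)>\alpha(n+2\epsilon-1)/2$ read off from the computations of $\delta_{P(\BA)}$ and the Weyl chamber $C_P$ in the preceding lemmas. Your Step 1 merely makes explicit the majorization and the uniformity in $\Phi$ that the paper leaves to the cited references, and your verification that $s_0$ lies in the half-plane exactly when $m>2n+4\epsilon-2$ matches the paper's concluding remark.
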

\begin{proof}
This follows from Godement's convergence criterion (when $\wt{G(\BA)}=G(\BA)$, see Lem. 2.2 on p. 118 of \cite{Morris 1982}), which asserts that the series $\sum_{\gamma \in P(F)\bs G(F)}f_{\Phi}^{(s)}(\gamma g)$ converges uniformly for $g$ in a compact set provided $\Re(\lambda)-\delta_{P(\BA)} \in C_P$. See Thm. 3 on p. 125 of \cite{Godement 1967} for the number field case, and see \cite[p. 980]{Morris 1983} or Prop II.1.5 on pp. 85--86 of \cite{Moeglin-Waldspurger 1995} when $\wt{G(\BA)}$ is a double cover of $G(\BA)$.

Note that $\Re(\lambda)-\delta_{P(\BA)} \in C_P$ if and only if $\Re(s)+\alpha(n+2\epsilon-1)/2-\alpha(n+2\epsilon-1)>0$, i.e. $\Re(s)> \alpha(n+2\epsilon-1)/2$.
\end{proof}

From now on, we write
\[
E(\Phi)=E(1,s_0,\Phi)
\]
for $\Phi \in \CS(X(\BA))$. Then
\[
E(\Phi)=\sum_{\gamma \in P(F)\bs G(F)}\omega(\gamma)\Phi(0),
\]
the series on the right side being absolutely convergent whenever $m> 2n+4\epsilon-2$.

By the Bruhat decomposition $G(k)=\cup_{r=0}^n P(k)w_r P(k)=\cup_{r=0}^n P(k)w_r N(k)$, we have
\[
E(\Phi)=\Phi(0)+\sum_{r=1}^n \sum_{b\in \Her_n(k)} \omega(w_r n(b))\Phi(0).
\]
The term $\omega(w_r n(b))\Phi(0)$ is given by
\[
\omega(w_r n(b))\Phi(0)=\int_{X_r(\BA)} \omega(n(b))\Phi(x)\,\dif x=\int_{X_r(\BA)} \Phi(x)\psi(q_b(x))\,\dif x,
\]
where $X_r=\{(x_1, \ldots, x_r, 0, \ldots, 0)\}\subset X$.

In particular, the term $\omega(w_n n(b))\Phi(0)$ is given by
\[
\omega(w_n n(b))\Phi(0)=\int_{X(\BA)} \omega(n(b))\Phi(x)\,\dif x=\int_{X(\BA)} \Phi(x)\psi(q_b(x))\,\dif x.
\]

For $1\leq r \leq n$, let
\begin{equation}
E_{X_r}(\Phi)=\sum_{b\in \Her_n(k)} \int_{X_r(\BA)} \Phi(x)\psi(q_b(x))\,\dif x. \label{32}
\end{equation}
Also let $E_{X_0}(\Phi)=\Phi(0)$.

Note that for any $0\leq r \leq n$, if we embed $\Her_r$ into $\Her_n$ via $b_1 \mapsto \begin{pmatrix} b_1 & 0 \\ 0 & 0 \end{pmatrix}$, then
\[
E_{X_r}(\Phi)=\sum_{b\in \Her_r(k)}\int_{X_r(\BA)} \Phi(x)\psi(q_b(x))\,\dif x.
\]

Then
\begin{equation}
E(\Phi)=\Phi(0)+\sum_{1\leq r \leq n}E_{X_n}(\Phi)=E_{X}(\Phi)+\sum_{0\leq r \leq n-1}E_{X_r}(\Phi). \label{33}
\end{equation}

We assume $m>2n+4\epsilon-2$ in the rest of this section. Note that this is just condition $(\mathbf{B})$ on p. 55 of \cite{Weil 1965}. Then by Theorem \ref{Thm 1} the above series (\ref{32}) and (\ref{33}) are
absolutely convergent, uniformly in $\Phi$ on every compact subset of $\CS(X(\BA))$.

For $b\in \Her_n(k)$, let
\[
F_{\Phi}^*(b)=\int_{X(\BA)}\Phi(x)\psi(q_b(x))\, \dif x.
\]
Then
\[
E_X(\Phi)=\sum_{b\in \Her_n(k)} F_{\Phi}^*(b),
\]
and it follows that this series is absolutely convergent, uniformly in $\Phi$ on every compact subset of
$\CS(X(\BA))$. It follows from the Poisson summation formula that
\begin{equation}
E_X(\Phi)=\sum_{b\in \Her_n(k)} F_{\Phi}(b),  \label{34}
\end{equation}
where $F_{\Phi}$ is the Fourier transform of $F_{\Phi}^*$. Moreover, by Prop. 2 on p. 7 of \cite[n. 2]{Weil 1965}, the Fourier transform is given, for each $b\in \Her_n(\BA)$, by
\[
F_{\Phi}(b)=\int \Phi(x)\, \dif \mu_b(x),
\]
where $\mu_b$ is a positive tempered measure on $X(\BA)$, of support contained in $i_X^{-1}(\{b\})$;
and $F_{\Phi}$ and $F_{\Phi}^*$ are continuous and integrable functions on $\Her_n(\BA)$.
Finally, Prop. 2 of \cite[n. 2]{Weil 1965} shows that the second member of (\ref{34}) is absolutely convergent; as $\mu_b$
are positive measures, we conclude, by Lem. 5 on p. 194 of \cite[n. 41]{Weil 1964}, that the second member
converges uniformly on every compact subset of $\CS(X(\BA))$. According to Lem. 2 on p. 5 of \cite[n. 2]{Weil 1965}, this shows that $E_X$
is a positive tempered measure, given by
\[
E_X=\sum_{b\in \Her_n(k)} \mu_b.
\]
where $\mu_b(\Phi)=\int \Phi \,\dif \mu_b=F_{\Phi}(b)$.
Similarly, for $r<n$, $E_{X_r}$
is a positive tempered measure given by
\[
E_{X_r}(\Phi)=\sum_{b\in \Her_r(k)} \mu_b(\Phi_r),
\]
where $\Phi_r=\Phi|_{X_r(\BA)}$ and $\Her_r$ is embedded into $\Her_n$ via $b_1 \mapsto \begin{pmatrix} b_1 & 0 \\ 0 & 0 \end{pmatrix}$.

Finally, we conclude similarly from formula (\ref{33}) that $E$ is a positive tempered measure, given by the sum of the
measures $E_{X_r}$.

It is easy to see that if $\det(b) \neq 0$, then the $b$-th Fourier coefficient of $E(\Phi)$ can be expressed as
\[
E_{b}(\Phi)=\sum_{r=1}^{n}F_{\Phi_r}(b).
\]

Taking for $\Phi$ a function of the form
\[
\Phi(x)=\prod_v \Phi_v(x_v) \quad \quad (x=(x_v)\in X(\BA)),
\]
where the product is over all the places $v$ of $k$, $\Phi_v$ belongs to $\CS(X_v)$ for any $v$,
and $\Phi_v$ is the characteristic function of $X_v^o:=X(O_v)$ for almost all $v$. And we denote by $F_v$ and $F_v^*$, for each $v$, the functions defined on $\Her_n(k_v)$ by the formulas
\[
F_v(b)=\int_{U_v(b)} \Phi_v(x) \,|\theta_b(x)|_v, \quad
F_v^*(b)=\int_{X_v} \Phi_v(x)\psi_v(q_b(x))\, \dif x;
\]
here we write $U_v(b)$ for the variety formed by points of $i_X^{-1}(\{b\})$ of maximal rank
in $X_v$, and $\theta_b$ for the gauge form defined on this variety by the formula (29) on p. 54 of
\cite[n. 37]{Weil 1965}. According to Prop. 6 on p. 54 of \cite[n. 37]{Weil 1965}, $F_v$ and $F_v^*$ are continuous and integrable, and are Fourier transforms of each other. By
the hypotheses made on $\Phi$, we see immediately that $F_v^*$ takes constant value 1 on $\Her_n(k_v)^o$
 for almost all $v$, here $\Her_n(k_v)^o$
denotes the lattice in $\Her_n(k_v)$ generated by an arbitrarily chosen basis $\Her_n(k)^o$ of
$\Her_n(k)$ over $k$.

It is then immediate that, for any $b=(b_v)\in \Her_n(\BA)$, we have
\[
F_{\Phi}^*(b)=\prod_v F_v^*(b_v),
\]
where almost all the factors of the second member are of value 1. We deduce that
\[
\int |F_{\Phi}^*(b)|~\dif b =\prod_v \int |F_v^*(b_v)|~\dif b_v.
\]
In the above equality, the first member is $<+\infty$; it is $\neq 0$ unless $F_{\Phi}^*=0$;
besides, we can always modify a finite number of the functions $\Phi_v$ so as to have $F_{\Phi}^*\neq 0$,
for example by taking $\Phi_v\geq 0$ and $\Phi_v\neq 0$ for any $v$, which implies that $F_{\Phi}\neq 0$ and
consequently $F_{\Phi}^*\neq 0$. As almost all the factors of the second member of the above equality are $\geq 1$,
it follows that the second member is absolutely convergent (in the sense defined in note $(^1)$ on p. 11 of \cite{Weil 1965}, which means that there exists a finite set $S$ of places of $k$ such that all the factors outside of $S$ are defined and nonzero, and the product of all the factors outside of $S$ is absolutely convergent).
We conclude easily that the Fourier transform $F_{\Phi}$ of $F_{\Phi}^*$ is the product of the Fourier transforms
$F_v$ of $F_v^*$, that is, for any $b=(b_v)\in \Her_n(\BA)$, we have
\[
F_{\Phi}(b)=\prod F_v(b_v),
\]
where the product of the second member is absolutely convergent.

If we denote by $\mu_v$ the tempered measure on $\Her_n(k_v)$ determined by the measure $|\theta_{b_v}|_v$
on $U_v(b_v)$, then $F_v(b_v)$ is just $\mu_v(X_v^o)$ whenever $\Phi_v$ is the characteristic
function of $X_v^o$. The above formula thus shows that the product of $\mu_v(X_v^o)$ is absolutely
convergent, and that the measure $\mu_b$ which appears in the above expression of $F_{\Phi}$
is just $\prod \mu_v$.

When $b$ belongs to $\Her_n(k)$, the set $i_X^{-1}(\{b\})$, on the universal domain $\Omega$, is a $k$-closed
subset of $X(\Omega)$. We denote by $U(b)$ the set of points of maximal rank of this set; it is a $k$-open
subset of $i_X^{-1}(\{b\})$; according to Prop. 3 on p. 34 of \cite[n. 22]{Weil 1965}, when $U(b)$ is not empty, it is
an orbit of the group $U(V)$, taking also on the universal domain. We conclude easily from Lem. 8 on p. 28 of \cite[n. 17]{Weil 1965}
that, if $L\supset k$ is a field containing $k$, then the set $U(b)_L$ of points of $U(b)$ which are rational over $L$ is just the set of points of $i_X^{-1}(\{b\})$ in $X(L)$ which are of maximal rank in $X(L)$. In particular,
for $L=k_v$, we see that $U(b)_v$ is just the set $U_v(b)$.

For $b\in \Her_n(k)$, let $\theta_b$ denote the gauge form on the variety $U(b)$ defined by the formula
\[
\theta_b(x)=\left(\frac{\dif x}{\dif i_X(x)} \right)_b,
\]
in the sense on p. 14 of \cite[n. 6]{Weil 1965}.

Recall that a {\sl system of convergence factors} for an algebraic group $\mathfrak{G}$ over $k$ is a sequence of positive real numbers $\lambda=(\lambda_v)$ indexed by the places of $k$ such that the product $\prod \lambda_v \int_{\mathfrak{G}(O_v)}|\omega|_v$ is absolutely convergent in the sense that there exists a finite set $S$ of places of $k$ such that $\int_{\mathfrak{G}(O_v)}|\omega|_v$ is defined and nonzero for $v\notin S$ and the product $\prod_{v\notin S}\lambda_v \int_{\mathfrak{G}(O_v)}|\omega|_v$ is absolutely convergent in the usual sense, where $\omega$ is a gauge form on $\mathfrak{G}$ and $|\omega|_v$ is the corresponding positive measure on $\mathfrak{G}(k_v)$ for every place $v$ of $k$. See \cite[p. 11]{Weil 1965}.

We have the following analogue of Lem. 19 on p. 61 of \cite[n. 43]{Weil 1965}. The proof is similar and we omit it.

\begin{lem}
For every $b\in \Her_n(k)$, 1 is a system of convergence factors for $U(b)$, and we have $\mu_b=|\theta_b|_{\BA}$.
\end{lem}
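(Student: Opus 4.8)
The plan is to assemble the local and global ingredients already established in Section~\ref{Section 4}, the only genuinely new local input being Weil's Fourier identity for the weighted integrals $F_v$ and $F_v^*$. First I would record that, since $b\in\Her_n(k)$, the set $U(b)$ is a homogeneous space under $H=U(V)$ (Prop.~3 on p.~34 of \cite[n.~22]{Weil 1965}), so the gauge form $\theta_b(x)=(\dif x/\dif i_X(x))_b$ is everywhere nonzero on $U(b)$ and determines a positive measure $|\theta_b|_v$ on each $U(b)_v=U_v(b)$. The crucial local statement, the analogue of Prop.~6 on p.~54 of \cite[n.~37]{Weil 1965}, is that $F_v$ and $F_v^*$ are continuous, integrable, and Fourier transforms of each other, with
\[
F_v(b_v)=\int_{U_v(b)}\Phi_v(x)\,|\theta_b(x)|_v.
\]
In particular the local measure $\mu_v$ of Section~\ref{Section 4} is exactly the measure $|\theta_b|_v$ carried by $U_v(b)$. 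This identity remains valid here because the references on which it rests permit a base field of odd characteristic.

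Next I would specialize to a factorizable $\Phi=\prod_v\Phi_v$ with $\Phi_v$ the characteristic function of $X_v^o$ for almost all $v$. For such $v$ the local factor is $F_v(b_v)=\mu_v(X_v^o)=\int_{U_v(b)\cap X_v^o}|\theta_b|_v$, which for almost all such $v$ is the volume $\int_{U(b)(O_v)}|\theta_b|_v$ of the integral points. In Section~\ref{Section 4} it was already shown, from the integrability of $F_\Phi^*$ and $F_\Phi$ together with the factorizations $F_\Phi^*(b)=\prod_v F_v^*(b_v)$ and $F_\Phi(b)=\prod_v F_v(b_v)$, that the product $\prod_v\mu_v(X_v^o)$ converges absolutely and that $\mu_b=\prod_v\mu_v$. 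The absolute convergence of $\prod_v\int_{U(b)(O_v)}|\theta_b|_v$ is precisely the assertion that the constant sequence $\lambda_v=1$ is a system of convergence factors for $U(b)$, in the sense recalled just before the lemma.

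Finally, with all convergence factors equal to $1$, the adelic measure $|\theta_b|_{\BA}$ is by definition the restricted product of the local measures $|\theta_b|_v$. Combining the local identification $\mu_v=|\theta_b|_v$ with the global factorization $\mu_b=\prod_v\mu_v$ then gives $\mu_b=\prod_v|\theta_b|_v=|\theta_b|_{\BA}$, which is the second assertion of the lemma.

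I expect the one real obstacle to be the local Fourier identity displayed above, the analogue of Weil's Prop.~6, since everything else is bookkeeping already carried out in Section~\ref{Section 4}. Verifying it requires the orbit description of $U(b)$ under $H$, an explicit account of the gauge form $\theta_b$ through the submersion $i_X$, and the local Fourier analysis of the orbital integral $F_v^*$, all transplanted from the number-field characteristic-zero setting of \cite{Weil 1965}. I would stress that the convergence of the product of local volumes, which could otherwise demand a separate Lang--Weil-type point count over the residue fields, is here obtained for free from the global integrability established in Section~\ref{Section 4}; this is what makes the choice $\lambda_v=1$ legitimate and the proof genuinely ``similar'' to Weil's.
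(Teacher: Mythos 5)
Your proposal is correct and follows exactly the route the paper intends: the paper omits the proof, saying only that it is ``similar'' to Lem.~19 of \cite{Weil 1965}, and what you write is precisely that argument assembled from the ingredients already displayed in Section~\ref{Section 4} (the local Fourier identity of Prop.~6 of \cite[n.~37]{Weil 1965}, the factorizations $F_{\Phi}^*=\prod_v F_v^*$ and $F_{\Phi}=\prod_v F_v$, and the absolute convergence of $\prod_v\mu_v(X_v^o)$ extracted from the global integrability of $F_{\Phi}^*$). Your closing observation — that the convergence of the product of local volumes is obtained from the global Fourier analysis rather than from a separate Lang--Weil point count — is exactly the point of Weil's Lem.~19 and of this lemma.
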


In summary, we have shown the following results, which are analogues of Thm. 2 and Thm. 3 on pp. 62--63 of \cite[n. 44]{Weil 1965}.

\begin{thm}\label{Thm 2}
Assume that $m>2n+4\epsilon-2$. For $\Phi \in \CS(X(\BA))$, put
\[
E_{X}(\Phi)=\sum_{b\in \Her_n(k)}\int_{X(\BA)} \Phi(x)\psi(q_b(x))\,\dif x.
\]
Then the series of the second member
is absolutely convergent, and $E_{X}$ is a positive tempered measure. Moreover, for each $b\in \Her_n(k)$,
1 is a system of convergence factors for the variety $U(b)$ of points in $i_X^{-1}(\{b\})$ with
maximal rank; and, if $\theta_b$ denotes the gauge form on this variety defined by the formula
\[
\theta_b(x)=\left(\frac{\dif x}{\dif i_X(x)} \right)_b,
\]
then the measure $|\theta_b|_{\BA}$ on $U(b)_{\BA}$ is equal to the positive tempered measure $\mu_b$ on $X(\BA)$ given by $\mu_b(\Phi)=F_{\Phi}(b)$, and we have
\[
E_{X}=\sum_{b\in \Her_n(k)}\mu_b.
\]
In particular, if $n=0$, then $E_{X}(\Phi)=\Phi(0)$.
\end{thm}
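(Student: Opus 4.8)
The plan is to assemble, in the correct order, the convergence and Fourier-analytic facts that were worked out place by place in the discussion preceding the theorem, since the theorem is essentially a summary of that analysis. First I would verify absolute convergence. Writing $E_X(\Phi)=\sum_{b\in\Her_n(k)}F_\Phi^{*}(b)$ as above, I would observe that the hypothesis $m>2n+4\epsilon-2$ is exactly equivalent to $s_0=\alpha(m-n+1-2\epsilon)/2>\alpha(n+2\epsilon-1)/2$, which is Godement's convergence range from Theorem \ref{Thm 1}. Hence the full Eisenstein series, and in particular its $w_n$-contribution $E_X$, converges absolutely and uniformly on compact subsets of $\CS(X(\BA))$.

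Next I would pass to the spectral side. Applying the Poisson summation formula rewrites $E_X(\Phi)=\sum_{b}F_\Phi(b)$, where $F_\Phi$ is the Fourier transform of $F_\Phi^{*}$. By Prop.~2 of \cite[n.~2]{Weil 1965} each $F_\Phi(b)$ equals $\mu_b(\Phi)=\int\Phi\,\dif\mu_b$ for a positive tempered measure $\mu_b$ supported on $i_X^{-1}(\{b\})$, with $F_\Phi,F_\Phi^{*}$ continuous and integrable. Combining this with the uniform convergence on compacta (Lem.~5 of \cite[n.~41]{Weil 1964}) and with Lem.~2 of \cite[n.~2]{Weil 1965}, I would conclude that $E_X$ is itself a positive tempered measure and that $E_X=\sum_{b\in\Her_n(k)}\mu_b$. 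This already establishes the structural assertions of the theorem.

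The crux is the identification $\mu_b=|\theta_b|_{\BA}$ together with the convergence-factor claim. Here I would factor $\Phi=\prod_v\Phi_v$ and use the Euler-product factorization $F_\Phi(b)=\prod_v F_v(b_v)$, where by Prop.~6 of \cite[n.~37]{Weil 1965} the local functions $F_v,F_v^{*}$ are Fourier transforms of each other and $F_v(b_v)=\int_{U_v(b)}\Phi_v\,|\theta_{b_v}|_v$ over $U_v(b)=U(b)_v$. This yields $\mu_b=\prod_v\mu_v$ with $\mu_v=|\theta_{b_v}|_v$ locally, and the remaining global statement — that $1$ is a system of convergence factors for $U(b)$, so that the product measure is literally $|\theta_b|_{\BA}$ — is exactly the analogue of Weil's Lem.~19 recorded just above the theorem. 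The hard part will be precisely this last point: it requires controlling the local densities $\mu_v(X_v^{o})$ for almost all $v$ and proving the absolute convergence of their Euler product, which is where the genuine arithmetic content resides. Finally, the degenerate case $n=0$ is immediate, since then $X=\{0\}$ and $\Her_0(k)=\{0\}$, so $E_X(\Phi)=\Phi(0)$.
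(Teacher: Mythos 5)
Your proposal is correct and follows essentially the same route as the paper: absolute convergence via Godement's criterion (Theorem \ref{Thm 1}) applied at $s_0$, Poisson summation plus Prop.~2 and Lem.~2 of Weil's 1965 paper and Lem.~5 of the 1964 paper to get $E_X=\sum_b\mu_b$ as a positive tempered measure, and the Euler-product factorization with Prop.~6 of Weil to reduce the identification $\mu_b=|\theta_b|_{\BA}$ to the convergence-factor lemma stated just before the theorem. The paper likewise treats that last lemma as the analogue of Weil's Lem.~19 (omitting its proof), so you have correctly isolated where the remaining arithmetic content lies.
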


\begin{thm}\label{Thm 3}
Assume that $m>2n+4\epsilon-2$. For $\Phi \in \CS(X(\BA))$, put
\[
E(\Phi)=\sum_{\gamma \in P(F)\bs G(F)} \omega(\gamma)\Phi(0).
\]
Then the series of the second member
is absolutely convergent; $E$ is a positive tempered measure; and we have
\[
E=\sum_{r=0}^n E_{X_r},
\]
where $X_r=\{(x_1, \ldots, x_r, 0, \ldots, 0)\}\subset X$, $E_{X_0}(\Phi)=\Phi(0)$, and for $1\leq r \leq n$,
\[
E_{X_r}(\Phi)=
\sum_{b\in \Her_n(k)}\int_{X_r(\BA)} \Phi(x)\psi(q_b(x))\,\dif x .
\]
Note that if we embed $\Her_r$ into $\Her_n$ via $b_1 \mapsto \begin{pmatrix} b_1 & 0 \\ 0 & 0 \end{pmatrix}$ and let $\Phi_r=\Phi|_{X_r(\BA)}$, then
\[
E_{X_r}(\Phi)=\sum_{b\in \Her_r(k)}\int_{X_r(\BA)} \Phi(x)\psi(q_b(x))\,\dif x=\sum_{b\in \Her_r(k)}\mu_b(\Phi_r).
\]
\end{thm}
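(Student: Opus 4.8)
The plan is to derive the three assertions — absolute convergence, the cell decomposition $E=\sum_{r=0}^{n}E_{X_r}$, and positivity/temperedness — from Theorem \ref{Thm 1}, the Bruhat decomposition of Section \ref{Section 2}, the explicit Weil-representation formulas, and Theorem \ref{Thm 2}, in that order. First I would settle convergence. The exponent $s_0=\alpha(m-n+1-2\epsilon)/2$ satisfies $s_0>\alpha(n+2\epsilon-1)/2$ precisely when $m>2n+4\epsilon-2$, so under the standing hypothesis Theorem \ref{Thm 1} applies at $s=s_0$ and gives that $E(\Phi)=E(1,s_0,\Phi)$ converges absolutely, uniformly for $\Phi$ in compact subsets of $\CS(X(\BA))$. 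Here one checks that $f_{\Phi}^{(s_0)}$ is left $P(k)$-invariant: for $p=n(b)m(a)$ with $a\in GL_n(D)$ the product formula gives $|a(p)|=|\nu(a)|_{\BA}=1$, while $\chi(p)=\chi(\nu_K(a))=1$ since $\chi$ is trivial on $K^{\times}$, so the summand $\omega(\gamma)\Phi(0)$ depends only on the coset $P(k)\gamma$ and the series is well defined on $P(k)\bs G(k)$.

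Next I would carry out the decomposition by Bruhat cells. Absolute convergence lets me regroup the terms of $E(\Phi)=\sum_{\gamma\in P(k)\bs G(k)}\omega(\gamma)\Phi(0)$ along $G(k)=\bigsqcup_{r=0}^{n}P(k)w_rN(k)$. The cell $r=0$ is $P(k)$ itself and contributes $\omega(1)\Phi(0)=\Phi(0)=E_{X_0}(\Phi)$. For $1\le r\le n$ the cosets in $P(k)\bs P(k)w_rN(k)$ are indexed by $N(k)/(N(k)\cap w_r^{-1}P(k)w_r)$, and the key computation is to identify this quotient. Using that $P$ is the stabilizer in $G$ of the Lagrangian $Y=\{(0,\ast)\}$ and writing $d\in\Her_n(k)$ in blocks of sizes $n-r$ and $r$, a direct calculation of $Y\,w_r\,n(d)$ shows that $w_rn(d)w_r^{-1}\in P(k)$ exactly when the $r\times r$ block of $d$ in the coordinates moved by $w_r$ vanishes. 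Hence the cell is parametrized by a copy of $\Her_r(k)$, and the loosely written sum over $\Her_n(k)$ in the definition of $E_{X_r}$ is to be read as the genuine, convergent sum over this embedded $\Her_r(k)$.

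Then I would evaluate the individual cell term and assemble. Applying $\omega(n(b))\Phi(x)=\psi(q_b(x))\Phi(x)$ followed by the formula for $\omega(w_r)$ from Section \ref{Section 2} and setting $x=0$ yields $\omega(w_rn(b))\Phi(0)=\int_{X_r(\BA)}\Phi(x)\psi(q_b(x))\,\dif x$; since $i_X(x)=(x,x)$ is supported in the relevant $r\times r$ block for $x$ in the integration subspace, this depends only on the block parametrizing the coset, matching the previous step. Summing over coset representatives gives $E_{X_r}(\Phi)=\sum_{b\in\Her_r(k)}\int_{X_r(\BA)}\Phi(x)\psi(q_b(x))\,\dif x$, and collecting the cells proves $E=\sum_{r=0}^{n}E_{X_r}$. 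For positivity and temperedness I would note that $\Phi\mapsto\Phi_r=\Phi|_{X_r(\BA)}$ is continuous $\CS(X(\BA))\to\CS(X_r(\BA))$, while Theorem \ref{Thm 2}, applied to the pair $(G_r,H)$ with $G_r=U(r,r)$, identifies $\sum_{b\in\Her_r(k)}\mu_b$ as a positive tempered measure on $X_r(\BA)$; composing with restriction exhibits each $E_{X_r}$ as a positive tempered measure on $X(\BA)$, so the finite sum $E$ is one as well.

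The hard part will be exactly the block bookkeeping in the cell computation: correctly pinning down which $r\times r$ block of $b$ is the true coset parameter and which $r$-dimensional coordinate subspace (the $X'$, $X''$ of the $\omega(w_r)$ formula versus the $X_r$ of the statement) the partial Fourier transform built into $\omega(w_r)$ integrates over, so that the cell sum is the convergent sum over $\Her_r(k)$ rather than an overcounted, divergent sum over all of $\Her_n(k)$, and so that the resulting functional is exactly $E_{X_r}$ up to the harmless relabeling that permutes the first and last $r$ factors of $X=V^n$. Everything else is routine given Theorems \ref{Thm 1} and \ref{Thm 2}.
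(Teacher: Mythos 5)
Your proposal is correct and follows essentially the same route as the paper: convergence at $s_0$ from Theorem \ref{Thm 1} under $m>2n+4\epsilon-2$, regrouping along the Bruhat cells $P(k)w_rN(k)$, evaluating $\omega(w_rn(b))\Phi(0)$ via the explicit $\omega(w_r)$ formula, and obtaining positivity and temperedness of each $E_{X_r}$ from the $\mu_b$'s of Theorem \ref{Thm 2} applied with $n$ replaced by $r$. Your explicit identification of the coset space $N(k)/(N(k)\cap w_r^{-1}P(k)w_r)\cong\Her_r(k)$ via the $r\times r$ block moved by $w_r$ is in fact more careful than the paper's shorthand of writing the cell sum over all of $\Her_n(k)$ and then remarking that it reduces to $\Her_r(k)$.
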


\section{Uniqueness theorems}\label{Section 5}
In this section, we will prove some results analogous to those in \cite[Chap. V]{Weil 1965}, which will be used in the proof of the Siegel-Weil formula. Recall $X=M_{m\times n}(D)=V^n$. We assume that $m>2n+4\epsilon-2$ in this section. We say a tempered measure
on $X(\BA)$ is invariant under $G(k)$ when it is invariant under $\omega(g)$ for any $g\in G(k)$; we also say that a measure (tempered or not)
on $X(\BA)$ is invariant under an element $h$ of $H(\BA)$ if it is so under the mapping $x\mapsto hx$ of $X(\BA)$ onto
itself. Recall that, by the corollary to Prop. 9 of \cite[n. 51]{Weil 1964}, the automorphisms $\Phi \mapsto \omega(g)\Phi$ and $\Phi(x) \mapsto \Phi(hx)$ of $\CS(X(\BA))$, for $g\in \wt{G(\BA)}$ and $h\in H(\BA)$, are permutable; it is also the same for
the corresponding automorphisms of the space of tempered distributions on $X(\BA)$.

Let $\hat{E}$ be a tempered measure on $X(\BA)$, invariant under $G(k)$, and let $\Phi \in \CS(X(\BA))$; then
$g\mapsto \hat{E}(\omega(g)\Phi)$ is a continuous function on $\wt{G(\BA)}$, left invariant under $G(k)$. We will give conditions for
this function to be bounded on $\wt{G(\BA)}$, uniformly in $\Phi$ on every compact subset of $\CS(X(\BA))$; for this,
we will apply the results of the reduction theory in Section \ref{Section 3} to the group $G$.

We write $x\in X(k)$ in the form $x=(x_1, \ldots, x_n)$, where each $x_i \in V(k)=M_{m\times 1}(D)$. Let $t=(t_1, \ldots, t_n)$ be an element of $(\mathbb{G}_m)^n$, with each $t_i\in \mathbb{G}_m$; we denote by $\lambda_t$ the automorphism of $X$ defined by
the diagonal matrix whose diagonal elements are $t_1, \ldots, t_n$; it can also be written as
\[
x=(x_1, \ldots, x_n) \mapsto x\lambda_t=(x_1 t_1, \ldots, x_n t_n).
\]

For $t\in (\mathbb{G}_m)^n$,
denote by $\bar{\lambda}_t$ the automorphism of $\Her_n$ determined by the automorphism $\lambda_t$ of $X$, which is given by
\[
b=(b_{\alpha \beta})\mapsto b \bar{\lambda}_t=(b_{\alpha \beta}t_{\alpha} t_{\beta}).
\]
Then the determinants of $\lambda_t$
and of $\bar{\lambda}_t$, with respect to the bases of $X(k)$ and of $\Her_n(k)$ over $k$, are respectively
\[
D(\lambda_t)=(t_1 \ldots t_n)^{m\delta }, \quad
D(\bar{\lambda}_t)=(t_1 \ldots t_n)^{(n+2\epsilon-1)\delta},
\]
where we recall that $\delta$ is the dimension of $D$ over $k$.
We conclude that the gauge form $\theta_b(x)$ on $U(b)$, defined as in Theorem \ref{Thm 2}, is
transformed by $\lambda_t$ to the gauge form
\begin{equation}
\theta_b(x\lambda_t^{-1})=(t_1 \ldots t_n)^{(-m+n+2\epsilon-1)\delta} \theta_{b'}(x) \label{36}
\end{equation}
on $U(b')$, with $b'=b \bar{\lambda}_t$. See \cite[p. 66]{Weil 1965}.

In particular, for $t\in (\BA^{\times})^n$, $\lambda_t$ and $\bar{\lambda}_t$ are automorphisms of $X(\BA)$ and of $\Her_n(\BA)$,
respectively. For $t=(t_1, \ldots, t_n)\in (\BG_m)^n$, if we put $|t|_{\BA}=|t_1 \cdots t_n|_{\BA}$, and regard $\lambda_t$ as the element $\mathrm{diag}(t_1, \ldots, t_n)$ in $GL_n(\BA)$, then we have, whenever $\wt{G(\BA)}=G(\BA)$, for $\Phi \in \CS(X(\BA))$, $x=(x_1, \ldots, x_n)$:
\begin{equation}
\omega(m(\lambda_t))\Phi(x)=\chi(m(\lambda_t))|t|_{\BA}^{m\delta/2}\Phi(x_1 t_1, \ldots, x_n t_n). \label{37}
\end{equation}
When $\wt{G(\BA)}$ is a double cover of $G(\BA)$, we have a similar formula for the action of $\omega((m(\lambda_t), 1))$, which we also write simply as $\omega(m(\lambda_t))$.

We denote by $T$
the image of $(\mathbb{G}_m)^n$ in $G$ under $t\mapsto m(\lambda_t)=\begin{pmatrix} \lambda_t & 0\\ 0 & \lambda_t^{-1}\end{pmatrix}$, where we regard $\lambda_t=\diag(t_1, \ldots, t_n)\in GL_n$; then
$T$ is a maximal split torus of $G$. The strictly positive roots of $G$ relative to $T$ are
$x_{\alpha}-x_{\beta}$ and $x_{\alpha}+x_{\beta}$ for $1\leq \alpha <\beta \leq n$, together with
$2x_{\alpha}$ for $1\leq \alpha \leq n$ in the case where $\epsilon>0$ (see p. 66 of \cite[n. 47]{Weil 1965}). Recall we have defined $\Theta(T)$ and $\Theta^{+}=\Theta(0)$ in Section \ref{Section 3}. Let $T(\BA)^{+}=\Theta^{+}\cdot T(\BA)^1$ . By Lemma \ref{Lem 2.3} there
is a compact subset $C_1$ of $G(\BA)$ such that $G(\BA)=C_1 \cdot T(\BA)^{+} \cdot G(k)$.
Let $T(\BA)'$ be the subset of $T(\BA)$ formed of elements $m(\lambda_t)$ of $T(\BA)$ for which
\[
|t_1|_{\BA}\geq \ldots \geq|t_n|_{\BA}\geq 1.
\]
For $\epsilon>0$, we verify easily that $T(\BA)^{+}=T(\BA)'^{-1}$, so that by putting $C=C_1^{-1}$ we have
$G(\BA)=G(k)\cdot T(\BA)' \cdot C$. If $\epsilon=0$, we verify easily that $T(\BA)^{+}$ is the union of $T(\BA)'^{-1}$ and $s_1^{-1} T(\BA)'^{-1}s_1$, where
\[
s_1=\begin{pmatrix}
1_{n-1} & 0 & 0 & 0\\
0 & 0 & 0 & 1\\
0 & 0 & 1_{n-1} & 0 \\
0 & 1 & 0 & 0
\end{pmatrix};
\]
so that by putting $C=C_1^{-1}\cup s_1 C_1^{-1}$ we have $G(\BA)=G(k)\cdot T(\BA)' \cdot C$.

In what follows we will identify $(\BA^{\times})^n$ with $T(\BA)$ by means of the isomorphism $t\mapsto m(\lambda_t)$, and also with its image in $\wt{G(\BA)}$ by means of the isomorphism $t\mapsto (m(\lambda_t), 1)$.  We can then write $\wt{G(\BA)}=G(k)\cdot T(\BA)'\cdot \pi^{-1}(C)$, where $\pi: \wt{G(\BA)} \ra G(\BA)$ is the canonical projection. This shows the following result, which is an analogue of Lem. 20 on p. 67 of \cite[n. 47]{Weil 1965}.

\begin{lem}\label{Lem 20}
Let $\hat{E}$ be a tempered measure on $X(\BA)$, invariant under $G(k)$; let $T(\BA)''$ be a subset of
$T(\BA)'$ such that $T(\BA)'\subset T(k)\cdot T(\BA)'' \cdot C'$, where $C'$ is a compact subset of $T(\BA)$. Then,
for the function $g\mapsto \hat{E}(\omega(g)\Phi)$ to be bounded on $\wt{G(\BA)}$, uniformly in $\Phi$ on every compact
subset of $\CS(X(\BA))$, it is necessary and sufficient that it be so on $T(\BA)''$.
\end{lem}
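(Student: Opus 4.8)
The plan is to dispose of necessity in one line and to devote the work to sufficiency, exploiting the reduction-theoretic decomposition of $\wt{G(\BA)}$ recorded just above the lemma, the $G(k)$-invariance of $\hat{E}$, and the continuity of the Weil representation. Necessity is immediate: if $g\mapsto \hat{E}(\omega(g)\Phi)$ is bounded on all of $\wt{G(\BA)}$ uniformly in $\Phi$ on compact subsets of $\CS(X(\BA))$, then a fortiori it is so on the subset $T(\BA)''$.

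For sufficiency, I would fix a compact subset $K$ of $\CS(X(\BA))$ and first observe that, since $\pi$ is a finite cover (the identity or a double cover), $\pi^{-1}(C)$ is a compact subset of $\wt{G(\BA)}$, and that by the reduction theory of Section \ref{Section 3} one has $\wt{G(\BA)}=G(k)\cdot T(\BA)'\cdot \pi^{-1}(C)$. Given $g\in \wt{G(\BA)}$, write $g=\gamma t' c$ with $\gamma\in G(k)$, $t'\in T(\BA)'$, $c\in \pi^{-1}(C)$. Because $\omega$ is a representation and $\hat{E}$ is invariant under $\omega(\gamma)$ for $\gamma\in G(k)$, we obtain $\hat{E}(\omega(g)\Phi)=\hat{E}(\omega(t')\,\omega(c)\Phi)$, which eliminates the $G(k)$-factor.

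The next step is to absorb the compact factors into the test function, using that the action $(g,\Phi)\mapsto \omega(g)\Phi$ is continuous, so that the image of a compact family under it is again compact; this is exactly the content of the continuity lemmas of \cite[n. 41]{Weil 1964} already used in the proof of Lemma \ref{Lem 5}. Setting $K_1=\{\omega(c)\Phi : c\in \pi^{-1}(C),\ \Phi\in K\}$, which is compact, we have $\hat{E}(\omega(g)\Phi)=\hat{E}(\omega(t')\Phi_1)$ with $\Phi_1\in K_1$. I would then invoke the hypothesis $T(\BA)'\subset T(k)\cdot T(\BA)''\cdot C'$ to write $t'=t_0 t'' c'$ with $t_0\in T(k)$, $t''\in T(\BA)''$, $c'\in C'$. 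Since $T(k)\subset G(k)$, the invariance of $\hat{E}$ again removes $\omega(t_0)$, giving $\hat{E}(\omega(t')\Phi_1)=\hat{E}(\omega(t'')\Phi_2)$ with $\Phi_2=\omega(c')\Phi_1$ ranging over the compact set $K_2=\{\omega(c')\Psi : c'\in C',\ \Psi\in K_1\}$.

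Finally, the assumption that $g\mapsto \hat{E}(\omega(g)\Phi)$ is bounded on $T(\BA)''$, uniformly in $\Phi$ on compacts, yields a constant $M=M(K_2)$ with $|\hat{E}(\omega(t'')\Phi_2)|\leq M$ for all $t''\in T(\BA)''$ and $\Phi_2\in K_2$; tracing back gives $|\hat{E}(\omega(g)\Phi)|\leq M$ for every $g\in \wt{G(\BA)}$ and every $\Phi\in K$, as desired. The one point I would treat most carefully — and the only real subtlety — is the passage through compact subsets of $\CS(X(\BA))$: each application of $\omega$ along a compact set of group elements must send a compact family of Schwartz–Bruhat functions to a compact family, so that the single bound $M$ depends only on the original $K$ and not on the individual decompositions of $g$. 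Beyond this bookkeeping no deeper difficulty arises, since the decomposition and the invariance carry all the structural weight.
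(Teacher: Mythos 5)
Your proof is correct and follows exactly the route the paper intends: the paper states the lemma as an immediate consequence of the decomposition $\wt{G(\BA)}=G(k)\cdot T(\BA)'\cdot \pi^{-1}(C)$ established just before it, together with the $G(k)$-invariance of $\hat{E}$ (which also disposes of the $T(k)$-factor) and the continuity lemmas of \cite[n.~41]{Weil 1964} used to absorb the compact factors $\pi^{-1}(C)$ and $C'$ into the compact family of test functions. Your careful tracking of the compact sets $K_1$, $K_2$ is precisely the bookkeeping the paper leaves implicit.
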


For each $b\in \Her_n(k)$, denote by $b_1, \ldots, b_n$ the columns of the matrix $b$, and
write $b=(b_1, \ldots, b_n)$. We thus have $b_{\alpha}\in M_{n\times 1}(D)$ for $1\leq \alpha \leq n$. If $b=i_X(x)$ for some $x\in X$, then $b_{\alpha}=x^* \cdot Q \cdot x_{\alpha}$. For $0\leq \alpha \leq n$, we denote by
$\Her_n^{(\alpha)}(k)$ the set of elements $b=(b_1,\ldots, b_n)$ of $\Her_n(k)$ such that $b_1=\ldots=b_{\alpha}=0$
and $b_{\alpha+1}\neq 0$; $\Her_n(k)$ is thus the disjoint union of $\Her_n^{(\alpha)}(k)$ for $0\leq \alpha \leq n$.

We have the following analogue of Lem. 21 on p. 68 of \cite[n. 48]{Weil 1965}.

\begin{lem}\label{Lem 21}
Let $\hat{E}$ be a positive tempered measure on $X(\BA)$, invariant under $T(k)$, whose support is contained
in the union of $i_X^{-1}(\{b\})$ for $b\in \Her_n^{(0)}(k)$. Then the function $g\mapsto \hat{E}(\omega(g)\Phi)$ is bounded
on $T(\BA)'$, uniformly in $\Phi$ on every compact subset of $\CS(X(\BA))$.
\end{lem}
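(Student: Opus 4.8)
The plan is to follow Weil's proof of Lem. 21 in \cite[n. 48]{Weil 1965}, reducing the whole estimate to a single application of Lemma \ref{Lem 6}. Since we only need to bound absolute values, the unitary character $\chi$ and the metaplectic cocycle factors (all of absolute value $1$) are harmless, and by (\ref{37}) we have
\[
|\omega(m(\lambda_t))\Phi(x)|=|t|_{\BA}^{m\delta/2}\,|\Phi(x_1t_1,\ldots,x_nt_n)|,\qquad |t|_{\BA}=|t_1\cdots t_n|_{\BA}.
\]
First I would normalize $t$. Because $\hat{E}$ is invariant under $T(k)$ and $|\gamma_i|_{\BA}=1$ for $\gamma_i\in k^{\times}$ by the product formula, multiplying each $t_i$ by an element of $k^{\times}$ alters neither $\hat{E}(\omega(m(\lambda_t))\Phi)$ nor the numbers $|t_i|_{\BA}$. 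Using the compactness of $\BA^{\times,1}/k^{\times}$ together with the splitting $\BA^{\times}=\BA^{\times,1}\cdot\{a_{\tau}:\tau\in\BZ\}$ furnished by $v_0$, I can therefore assume $t_i=c_i a_{\tau_i}$ with $c_i$ in a fixed compact subset of $\BA^{\times,1}$ and $\tau_i\in\BZ$. Absorbing the compact factors $c_i$ into the test function keeps $\Phi$ inside a compact subset of $\CS(X(\BA))$ by Lem. 5 on p. 194 of \cite[n. 41]{Weil 1964}, so I reduce to $t_i=a_{\tau_i}$. On $T(\BA)'$ the condition $|t_1|_{\BA}\geq\cdots\geq|t_n|_{\BA}\geq 1$ translates into $\tau_1\leq\cdots\leq\tau_n\leq 0$, and since $\tau_1\leq\tau_j$ for every $j$,
\[
|t|_{\BA}^{m\delta/2}=q^{-(\tau_1+\cdots+\tau_n)m\delta/2}\leq q^{-\tau_1\cdot nm\delta/2}.
\]

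The key step is then to apply Lemma \ref{Lem 6} with $N=nm\delta/2$ to the morphism $p\colon X=V^n\to Y:=M_{n\times 1}(D)$ sending $x=(x_1,\ldots,x_n)$ to the first column $((x_i,x_1))_{1\leq i\leq n}$ of $i_X(x)$. This $p$ is a $k$-rational morphism, and it vanishes whenever $x_1=0$ since then $(x_i,x_1)=0$ for all $i$; thus the hypothesis $p(0,x^{(2)},\ldots,x^{(n)})=0$ of Lemma \ref{Lem 6} holds. On the support of $\hat{E}$ we have $i_X(x)=b$ with $b\in\Her_n^{(0)}(k)$, i.e. the first column $p(x)=b_1$ lies in $Y(k)$ and is nonzero; hence the remaining hypotheses $p(x)\in Y(k)$, $p(x)\neq 0$ are satisfied there. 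Applied to the compact family produced above, Lemma \ref{Lem 6} yields one $\Phi_0\in\CS(X(\BA))$ with
\[
q^{-\tau_1 N}\,|\Phi(a_{\tau_1}x_1,\ldots,a_{\tau_n}x_n)|\leq\Phi_0(x)
\]
for all $\Phi$ in the family, all $\tau_1\leq 0,\ldots,\tau_n\leq 0$, and all $x$ in the support of $\hat{E}$. Combined with $|t|_{\BA}^{m\delta/2}\leq q^{-\tau_1 N}$, this gives $|\omega(m(\lambda_t))\Phi(x)|\leq\Phi_0(x)$ on the support of $\hat{E}$.

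Finally, since $\hat{E}$ is a positive tempered measure supported in $\bigcup_{b\in\Her_n^{(0)}(k)}i_X^{-1}(\{b\})$ and $\Phi_0\geq 0$, I would conclude
\[
|\hat{E}(\omega(m(\lambda_t))\Phi)|\leq\hat{E}(\Phi_0)<\infty,
\]
a finite bound independent of $t\in T(\BA)'$ and uniform in $\Phi$ on the given compact subset, as required. The main difficulty is organizational rather than conceptual: one must choose $p$ so that its vanishing locus matches the defining condition $b_1=0$ of $\Her_n^{(0)}$, thereby making the \emph{first} coordinate the critical one, and then verify that the full scaling factor $|t|_{\BA}^{m\delta/2}$ is dominated by the single power $q^{-\tau_1 N}$ that Lemma \ref{Lem 6} controls. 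This last domination is exactly where the ordering $|t_1|_{\BA}\geq\cdots\geq|t_n|_{\BA}$ defining $T(\BA)'$ is used, so aligning the support hypothesis with the shape of $T(\BA)'$ is the crux of the argument.
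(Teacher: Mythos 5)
Your proposal is correct and follows essentially the same route as the paper: decompose $T(\BA)'$ as $T(k)\cdot\Theta'\cdot C'$ using the invariance under $T(k)$ and the place $v_0$, absorb the compact factor into the test functions, and apply Lemma \ref{Lem 6} to the morphism $p(x)=x^*\cdot Q\cdot x_1$, whose nonvanishing on the support of $\hat{E}$ is exactly the defining condition of $\Her_n^{(0)}(k)$. Your version merely makes explicit the choice $N=nm\delta/2$ and the domination $|t|_{\BA}^{m\delta/2}\leq q^{-\tau_1 N}$ coming from the ordering $\tau_1\leq\cdots\leq\tau_n\leq 0$, which the paper leaves implicit.
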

\begin{proof}
As before, we denote by $\Theta(T)$ the set of elements of $T(\BA)$ of the form $(a_{\tau_1}, \ldots, a_{\tau_n})$, with $\tau_{\alpha} \in \BZ$ for $1\leq \alpha \leq n$; put $\Theta'=\Theta(T)\cap T(\BA)'$; $\Theta'$ is the set of elements of the above form for which $\tau_1\leq \ldots \leq \tau_n\leq 0$. There is a compact subset $C'$ of $T(\BA)'$ such that $T(\BA)'=T(k) \cdot \Theta'\cdot C'$. Let $C_0$ be a compact subset of $\CS(X(\BA))$; let $C_0'$ be the set of $\omega(\theta)\Phi$ for $\theta\in C'$, $\Phi \in C_0$. Applying Lemma \ref{Lem 6} to the spaces
$X=M_{m\times n}(D)$,
$X^{(\alpha)}=M_{m\times 1}(D)$
for $1\leq \alpha \leq n$, $Y=M_{n\times 1}(D)$, and to the morphism $x\mapsto p(x)=x^*\cdot Q\cdot x_1$ of $X$ into $Y$, we conclude that there exists $\Phi_0 \in \CS(X(\BA))$ such that $|\omega(\theta)\Phi|\leq \Phi_0$ on the support of $\hat{E}$ for all $\theta \in \Theta'$, $\Phi \in C'_0$. The conclusion of the lemma follows.
\end{proof}

Let $b \in \Her_n(k)$; let $j$ be the canonical injection of $U(b)$ into $X$; $j$ then determines an injective mapping $j_{\BA}$ of $U(b)_{\BA}$ into $X(\BA)$, and
more precisely into $i_X^{-1}(\{b\})$.
Following the definition on p. 69 of \cite[n. 49]{Weil 1965}, we say a measure on $X(\BA)$ is {\sl supported by} ({\sl port\'{e}e par}) $U(b)_{\BA}$ if it is the
image under $j_{\BA}$ of a measure on $U(b)_{\BA}$. For example, this is so for the measure $\mu_b$, which by definition
is the image of $|\theta_b|_{\BA}$ under $j_{\BA}$, and which appears in Theorem \ref{Thm 2}.
When $b$ is a non-degenerate element of $\Her_n(k)$, it results from the remarks on p. 38 of \cite[n. 25]{Weil 1965}
that $j_{\BA}$ is an isomorphism of $U(b)_{\BA}$ onto $i_X^{-1}(\{b\})$; in this case, every measure of support
contained in $i_X^{-1}(\{b\})$ is supported by $U(b)_{\BA}$.

On the other hand, in the following, a place $v$ of $k$ will be given once and for all, and we write
$X(\BA)=X_v\times X'$. For $x\in X(\BA)$, we write $x=(x_v, x')$, where $x_v$ and $x'$ are the projections of $x$ onto $X_v$
and onto $X'$ respectively. We write similarly $U(b)_{\BA}=U(b)_v \times U(b)'$.

We have the following analogue of Lem. 22 on p. 70 of \cite[n. 49]{Weil 1965}.

\begin{lem}\label{Lem 22}
Let $b\in \Her_n(k)$, and let $H_v'$ be a subgroup of $H_v$ which acts transitively on $U(b)_v$.
Let $\mu$ be a positive tempered measure supported by $U(b)_{\BA}$  and invariant under $H_v'$. Then,
to every function $\Phi' \in \CS(X')$, there corresponds a constant $c(\Phi')$ such that for any $\Phi_v\in \CS(X_v)$ we have:
\begin{equation}
\int \Phi_v(x_v)\Phi'(x')\,\dif \mu((x_v, x'))=c(\Phi')\int_{U(b)_v}\Phi_v \cdot |\theta_b|_v.  \label{38}
\end{equation}
\end{lem}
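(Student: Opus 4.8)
The plan is to reduce the identity (\ref{38}) to the classical uniqueness, up to a positive scalar, of an invariant measure on a transitive $H_v'$-space, the space here being the single orbit $U(b)_v$. The strategy follows Weil's treatment of Lem. 22 in \cite{Weil 1965}.

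First I would unwind the hypothesis that $\mu$ is supported by $U(b)_{\BA}$: by definition there is a measure $\tilde\mu$ on $U(b)_{\BA}=U(b)_v\times U(b)'$ with $\mu=(j_{\BA})_*\tilde\mu$. Since $U(b)$ is stable under $H=U(V)$ (it is an $H$-orbit), since $j_{\BA}$ is $H_v'$-equivariant, and since $j_{\BA}$ is injective, the invariance of $\mu$ under $H_v'$ transfers to invariance of $\tilde\mu$ under $H_v'$. Here the subgroup $H_v'\subset H_v$ acts on $U(b)_{\BA}$ through the $v$-component only, by $h\cdot(\xi_v,\xi')=(h\xi_v,\xi')$, and trivially on $U(b)'$.

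Next, fixing $\Phi'\in\CS(X')$, I would form the partial measure on $U(b)_v$ obtained by integrating out the $x'$-variable against $\Phi'$, namely $\nu_{\Phi'}:=(\mathrm{pr}_v)_*(\Phi'\cdot\tilde\mu)$, where $\mathrm{pr}_v\colon U(b)_{\BA}\to U(b)_v$ is the projection and $\Phi'\cdot\tilde\mu$ denotes the measure $\Phi'(\xi')\,\dif\tilde\mu(\xi)$. For $\Phi'\geq 0$ this is a positive measure, and the general case follows by linearity; the temperedness of $\mu$ ensures that $\Phi_v\mapsto\int\Phi_v\,\dif\nu_{\Phi'}$ is a genuine continuous functional, so $\nu_{\Phi'}$ is a Radon measure on $U(b)_v$. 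The decisive observation is that $\nu_{\Phi'}$ is $H_v'$-invariant: the projection $\mathrm{pr}_v$ is $H_v'$-equivariant, the weight $\xi\mapsto\Phi'(\xi')$ depends only on the $x'$-component and is therefore $H_v'$-invariant, and $\tilde\mu$ is $H_v'$-invariant by the previous step, so the pushforward is invariant.

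Finally I would invoke uniqueness of invariant measures. Because $H_v'$ acts transitively on $U(b)_v$, this orbit is a homogeneous space $H_v'/S$ for the stabilizer $S$ of a base point, and $|\theta_b|_v$ is a nonzero $H_v$-invariant, hence $H_v'$-invariant, Radon measure on it. By the standard theorem that an invariant positive measure on a transitive space is unique up to a positive scalar once one exists (see \cite{Bourbaki 1963}), every $H_v'$-invariant positive measure on $U(b)_v$ is a multiple of $|\theta_b|_v$; applied to $\nu_{\Phi'}$ this produces a constant $c(\Phi')$ with $\nu_{\Phi'}=c(\Phi')\,|\theta_b|_v$. Substituting into $\int\Phi_v(x_v)\Phi'(x')\,\dif\mu=\int_{U(b)_v}\Phi_v\,\dif\nu_{\Phi'}$ yields (\ref{38}) at once, and if $U(b)_v$ is empty both sides vanish and the assertion is trivial. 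The step I expect to require the most care is establishing the $H_v'$-invariance of the partial measure $\nu_{\Phi'}$ and correctly matching it against $|\theta_b|_v$ through the uniqueness theorem; the disintegration itself is unproblematic, since Schwartz--Bruhat functions on $X'$ are locally constant with compact support, so $\Phi'\cdot\tilde\mu$ and its projection to $U(b)_v$ are manifestly Radon measures.
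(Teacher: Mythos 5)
Your proposal is correct and follows essentially the same route as the paper: pull the measure back to $U(b)_{\BA}=U(b)_v\times U(b)'$, integrate out the $x'$-variable against $\Phi'$ to get a positive $H_v'$-invariant measure on the orbit $U(b)_v$, and identify it with a scalar multiple of $|\theta_b|_v$ via the Bourbaki uniqueness theorem for invariant measures on homogeneous spaces, reducing first to $\Phi'\geq 0$. The only detail the paper adds that you omit is that one may need to replace $H_v'$ by its closure in $H_v$ before applying the homogeneous-space machinery, which is harmless.
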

\begin{proof}
By hypothesis, $\mu$ is the image of a measure $\nu$ on $U(b)_{\BA}$, i.e. the first member of (\ref{38}) is the integral of $\Phi_v(x_v)\Phi'(x')$ on $U(b)_v\times U(b)'$ with respect to $\nu$. Assume first $\Phi'\geq 0$. By hypothesis, the integral in question is finite whenever $\Phi_v$ is $\geq 0$ and belongs to $\CS(X_v)$, thus also whenever $\Phi_v$ is continuous and of compact support on $X_v$, and especially whenever $\Phi_v$ is continuous and of compact support on $U(b)_v$. It can thus be written as $\int \Phi_v \, \dif \nu_v$, where $\nu_v$ is a positive measure on $U(b)_v$. By the hypothesis made on $\mu$, $\nu_v$ is invariant under $H_v'$. We can then assume that $H_v'$ is closed in $H_v$ (if not, we replace it with its closure), and thus identify $U(b)_v$ with the homogeneous space determined by $H_v'$ and the stabilizer of one of its points in $H_v'$. But the definition of the gauge form $\theta_b$ in Theorem \ref{Thm 2} shows that it is invariant under $H$, up to a factor $\pm 1$; consequently, the measure $|\theta_b|_v$ is invariant under $H_v$, and especially under $H_v'$. The theorems on the uniqueness of  the invariant measure on homogeneous spaces (see \cite{Bourbaki 1963}, Chap. VII, $\S 2$, n. 6) then show that $\nu_v$ only differs from $|\theta_b|_v$ by a scalar factor $c(\Phi')$. The general case can be reduced to the special case $\Phi'\geq 0$ and thus follows immediately.
\end{proof}

Now we can prove the following analogue of Lem. 23 on p. 70 of \cite[n. 49]{Weil 1965}.

\begin{lem}\label{Lem 23}
Let $\hat{E}$ be a positive tempered measure, invariant under $T(k)$, which is the sum of measures $\hat{\mu}_b$
respectively supported by $U(b)_{\BA}$ for $b\in \Her_n(k)$. Assume that there exists a place $v$ of $k$ and a subgroup $H_v'$ of $H_v$ acting transitively on $U(b)_v$ for any $b\in \Her_n(k)$, such that
$\hat{E}$ is invariant under $H_v'$. Then the function $g\mapsto \hat{E}(\omega(g)\Phi)$ is bounded on $T(\BA)'$,
uniformly in $\Phi$ on every compact subset of $\CS(X(\BA))$.
\end{lem}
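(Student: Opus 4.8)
The plan is to follow Weil's proof of the corresponding Lemma 23 in \cite[n.\ 49]{Weil 1965}, organizing the estimate around the stratification of $\Her_n(k)$ by the subsets $\Her_n^{(\alpha)}(k)$ and combining Lemmas \ref{Lem 20}, \ref{Lem 21}, \ref{Lem 22} and \ref{Lem 6}. First I would use Lemma \ref{Lem 20} to replace $T(\BA)'$ by the smaller set $\Theta'=\Theta(T)\cap T(\BA)'$, i.e. the elements $m(\lambda_t)$ with $t=(a_{\tau_1},\dots,a_{\tau_n})$ and $\tau_1\le\dots\le\tau_n\le 0$; as in the proof of Lemma \ref{Lem 21} one has $T(\BA)'=T(k)\cdot\Theta'\cdot C'$ with $C'$ compact, so it suffices to bound $g\mapsto\hat{E}(\omega(g)\Phi)$ on $\Theta'$, where the torus scales only at $v_0$ with $|t_i|_{\BA}=q^{-\tau_i}$.

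Next I would split $\hat{E}=\sum_{\alpha=0}^{n}\hat{E}^{(\alpha)}$ with $\hat{E}^{(\alpha)}=\sum_{b\in\Her_n^{(\alpha)}(k)}\hat{\mu}_b$. Each stratum $\Her_n^{(\alpha)}(k)$ is stable under the action $b\mapsto b\bar{\lambda}_s$ of $T(k)$ (this multiplies the columns by units and preserves the pattern $b_1=\dots=b_\alpha=0$, $b_{\alpha+1}\ne 0$), and $H_v'$ preserves every $U(b)_v$; hence each $\hat{E}^{(\alpha)}$, and in fact each $\hat{\mu}_b$ occurring in it, is invariant under $T(k)$ and under $H_v'$. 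It is therefore enough to bound each $\hat{E}^{(\alpha)}(\omega(g)\Phi)$ separately on $\Theta'$. The stratum $\alpha=0$ is immediate: its support lies in the union of the $i_X^{-1}(\{b\})$ with $b\in\Her_n^{(0)}(k)$ and it is $T(k)$-invariant, so Lemma \ref{Lem 21} applies verbatim.

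For $\alpha\ge 1$ the difficulty is that Lemma \ref{Lem 21} cannot be used directly: the morphism $x\mapsto x^*\cdot Q\cdot x_1$ vanishes identically on the support (since $b_1=0$ there), while the Weil normalization $\prod_i|t_i|_{\BA}^{m\delta/2}$ coming from (\ref{37}) still grows, and it grows fastest precisely in the degenerate directions $i\le\alpha$, where $\tau_i$ is most negative. The plan to absorb this growth is to exploit the $H_v'$-invariance through Lemma \ref{Lem 22}: along the fibres at $v$, on which $H_v'$ acts transitively, the measure $\hat{\mu}_b$ must coincide with the canonical measure $|\theta_b|_v$ up to the $X'$-dependent factor $c(\Phi')$ of Lemma \ref{Lem 22}, and the gauge form $\theta_b$ transforms under $\lambda_t$ by the explicit factor $(t_1\cdots t_n)^{(-m+n+2\epsilon-1)\delta}$ recorded in (\ref{36}). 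Combining this factor with $\prod_i|t_i|_{\BA}^{m\delta/2}$ produces $\prod_i|t_i|_{\BA}^{(-m/2+n+2\epsilon-1)\delta}$, whose exponent is negative because the standing hypothesis $m>2n+4\epsilon-2$ forces $-m/2+n+2\epsilon-1<0$; hence this product is $\le 1$ on $\Theta'$ (all $\tau_i\le 0$). In the remaining, non-degenerate directions I would invoke Lemma \ref{Lem 6} exactly as in Lemma \ref{Lem 21}, but with the morphism $x\mapsto x^*\cdot Q\cdot x_{\alpha+1}$ attached to the first non-vanishing column $b_{\alpha+1}$ (which is non-zero on the support), to dominate the scaled integrand by a fixed $\Phi_0\in\CS(X(\BA))$; summing the resulting bounds against $\hat{E}^{(\alpha)}$ and using that $\hat{E}^{(\alpha)}(\Phi_0)\le\hat{E}(\Phi_0)<\infty$ by temperedness yields the uniform bound.

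The main obstacle is this $\alpha\ge1$ case, and more precisely the bookkeeping needed to make the cancellation rigorous: the Weil normalization blows up in the directions $i\le\alpha$, and the only mechanism taming it is the rigidity forced by $H_v'$-invariance along the transitive $v$-fibres (Lemma \ref{Lem 22}) together with the sharp transformation law (\ref{36}). One has to coordinate the place $v$ singled out by Lemma \ref{Lem 22} with the scaling place $v_0$ singled out by the reduction theory, change variables $x_v\mapsto x_v\lambda_{t_v}$ carefully so that $U(b)_v$ is carried to $U(b\bar{\lambda}_{t_v})_v$, and check that the residual measures on $X'$ reassemble, via the $T(k)$-invariance, into a finite total against $\Phi_0$. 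Once the exponent computation above is carried out and the non-degenerate directions are controlled by Lemma \ref{Lem 6}, the estimate closes as in the number-field case of \cite[n.\ 49]{Weil 1965}.
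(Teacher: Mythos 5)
You have the right architecture (stratify by $\alpha$, treat $\alpha=0$ by Lemma \ref{Lem 21}, and for $\alpha\ge 1$ play the Weil normalization off against the transformation law (\ref{36}) of $\theta_b$ via Lemma \ref{Lem 22}, using $m>2n+4\epsilon-2$ to get a negative exponent), and this is indeed the shape of the paper's argument. But there is a genuine gap in how you propose to realize the cancellation. The change of variables that produces the Jacobian factor from (\ref{36}) can only be carried out at the distinguished place $v$, because Lemma \ref{Lem 22} pins down $\hat{\mu}_b$ \emph{only} on the $v$-fibre (as $c_b(\Phi')\,|\theta_b|_v$); away from $v$ the measure is a black box, so you cannot transform it under a scaling supported at $v_0$. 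Moreover, even at a single place, scaling by $\lambda_t$ carries $U(b)_v$ to $U(b\bar{\lambda}_t)_v$, and for a general $t$ this re-indexes the sum over $b\in\Her_n^{(\alpha)}(k)$ in an uncontrolled way. Your step that combines the full $\prod_i|t_i|_{\BA}^{m\delta/2}$ with the full Jacobian $(t_1\cdots t_n)^{(-m+n+2\epsilon-1)\delta}$ is therefore not available as stated. You flag the need to ``coordinate $v$ with $v_0$'' but do not supply the mechanism.

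The missing device in the paper is the decomposition $t=\rho\, y\, \theta\, c$ with $\rho\in T(k)$, $c$ in a fixed compact set, $y\in T_v$ satisfying $y_\beta=1$ for $\beta\ge\alpha+1$ and $|y_\beta|_v\asymp|t_\beta t_{\alpha+1}^{-1}|_{\BA}\ge 1$ for $\beta\le\alpha$, and $\theta=(a_{\tau_1},\dots,a_{\tau_n})\in\Theta'_\alpha$, i.e.\ with $\tau_1=\cdots=\tau_{\alpha+1}\le\cdots\le\tau_n\le 0$. The global element $\rho$ (absorbed by $T(k)$-invariance) transfers the ``degenerate excess'' of the first $\alpha$ coordinates from $v_0$ to $v$, where two things save you: Lemma \ref{Lem 22} identifies the measure, and $\bar{\lambda}_y$ fixes \emph{every} $b\in\Her_n^{(\alpha)}(k)$ (since the first $\alpha$ rows and columns of such $b$ vanish and $y_\beta=1$ for $\beta>\alpha$), so no re-indexing occurs and only the scalar $|y_1\cdots y_\alpha|_v^{(-m+2n+4\epsilon-2)\delta/2}\le 1$ comes out. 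The residual part $\theta$, having \emph{equal} exponents in the first $\alpha+1$ coordinates, is then handled by Lemma \ref{Lem 6} with the grouping $X^{(1)}=M_{m\times(\alpha+1)}(D)$ and the morphism $p(x)=x^*\cdot Q\cdot(x_1,\dots,x_{\alpha+1})$, which vanishes when the whole first block vanishes and is nonzero on the support (because $b_{\alpha+1}\neq 0$). Your proposed single-column morphism $x\mapsto x^*\cdot Q\cdot x_{\alpha+1}$ would let Lemma \ref{Lem 6} absorb only powers of $q^{-\tau_{\alpha+1}}$, which cannot dominate the factors $q^{-\tau_\beta m\delta/2}$ for $\beta\le\alpha$ when $\tau_\beta\ll\tau_{\alpha+1}$; the block grouping, legitimate precisely because $\Theta'_\alpha$ forces $\tau_1=\cdots=\tau_{\alpha+1}$, is what closes the estimate.
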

\begin{proof}
Let $\hat{E}_{\alpha}$, for $0\leq \alpha \leq n$, be the sum of $\hat{\mu}_b$ for $b\in \Her_n^{(\alpha)}(k)$; we will have $\hat{E}=\hat{E}_0+\ldots+\hat{E}_n$. If $t\in T(k)$, then $\bar{\lambda}_t$ determines a permutation on each of the sets $\Her_n^{(\alpha)}(k)$, so that each of the measures $\hat{E}_{\alpha}$ is invariant under $T(k)$. On the other hand, $H(\BA)$ leaves invariant each of the sets $i_X^{-1}(\{b\})$; with the hypotheses of the statement, it follows that $H_v'$ leaves invariant each of the measures $\hat{\mu}_b$, thus also each of the $\hat{E}_{\alpha}$; this satisfies thus the same hypotheses as $\hat{E}$, and we are reduced to dealing with $\hat{E}_{\alpha}$.

Thus let $\alpha$ be such that $0\leq \alpha \leq n$. There is a constant $q_1$, equal to 1 if $v=v_0$ and to $q_v$ if $v\neq v_0$, such that there exists, for every $\tau \in \mathbb{Z}$, an element $y$ of $k_v$ satisfying $q^{-\tau} \leq |y|_v \leq q_1 q^{-\tau}$; and there exists a compact subset $C$ of $\BA^{\times}$ such that every element $t$ of $\BA^{\times}$ satisfying $1\geq |t|_{\BA} \geq q_1^{-1}$ can be written as the form $\rho c$ with $\rho \in k$, $c\in C$; we denote by $C^n$ the compact subset of $T(\BA)$ formed of the elements $(c_1, \ldots, c_n)$ with $c_{\beta}\in C$ for $1\leq \beta \leq n$. Let $C_0$ be a compact subset of $\CS(X(\BA))$, and we apply Lemma \ref{Lem 6} to the space $X=M_{m\times n}(D)$ considered as a product of the spaces
\[
X^{(1)}=M_{m\times (\alpha+1)}(D), \quad X^{(2)}=\ldots=X^{(n-\alpha)}=M_{m\times 1}(D)
\]
in such a way that the projections of $x=(x_1, \ldots, x_n)$ on these spaces are respectively $(x_1, \ldots, x_{\alpha+1})$, $x_{\alpha+2}$, $\ldots$, $x_n$; we take $Y=M_{n\times (\alpha+1)}(D)$, and $p$ the morphism of $X$ into $Y$ given by
\[
p(x)=x^* \cdot Q\cdot (x_1, \ldots, x_{\alpha+1}).
\]
It is concluded that there exists $\Phi_0\in \CS(X(\BA))$, $\Phi_0\geq 0$, such that
\[
|\omega(m(\lambda_{\theta}))\omega(m(\lambda_c))\Phi(x)|\leq \Phi_0(x)
\]
for all $x\in X(\BA)$, $i_X(x)\in \Her_n^{(\alpha)}(k)$, $c\in C^n$, $\Phi \in C_0$, and $\theta$ belonging to the set $\Theta'_{\alpha}$ of elements $(a_{\tau_1}, \ldots, a_{\tau_n})$ of $\Theta(T)$ which satisfy the condition
\[
\tau_1=\ldots=\tau_{\alpha+1}\leq \ldots \leq \tau_n\leq 0.
\]
Furthermore, we can assume that $\Phi_0$ is of the form $\Phi_v(x_v)\Phi'(x')$, with
\[
\Phi_v \in \CS(X_v), \quad \Phi'\in \CS(X').
\]
Now let $t=(t_1, \ldots, t_n)$ be an element of $T(\BA)'$. For $1\leq \beta \leq \alpha$, let $y_{\beta}\in k_v$ be such that $|y_{\beta}|_v$ is between $|t_{\beta} t_{\alpha+1}^{-1}|_{\BA}$ and $q_1 |t_{\beta}t_{\alpha+1}|_{\BA}$; let $y_{\beta}=1$ for $\beta \geq \alpha+1$; we will have $|y_{\beta}|_v\geq 1$ for $1\leq \beta \leq n$. On the other hand, for $\beta \geq \alpha+1$, let $\tau_{\beta}\in \mathbb{Z}$ be such that $|a_{\tau_{\beta}}|_{\BA}=|t_{\beta}|_{\BA}$, and let $\tau_{\beta}=\tau_{\alpha+1}$ for $1\leq \beta \leq \alpha$. For every $\beta$, we will have
\[
1\geq |t_{\beta} y_{\beta}^{-1} a_{\tau_{\beta}}^{-1}|_{\BA} \geq q_1^{-1},
\]
so that we can write $t_{\beta}=\rho_{\beta} y_{\beta} a_{\tau_{\beta}} c_{\beta}$, with $\rho_{\beta}\in k$, $c_{\beta}\in C$, for $1\leq \beta \leq n$. By putting
\[
y=(y_1, \ldots, y_n), \quad \theta=(a_{\tau_1}, \ldots, a_{\tau_n}),
\]
we will thus have $t=\rho y \theta c$ with $\rho \in T(k)$, $y\in T_v$, $\theta \in \Theta'_{\alpha}$ and $c\in C^n$. As $\hat{E}_{\alpha}$ is invariant under $T(k)$, it follows that we have
\[
|\hat{E}_{\alpha}(\omega(m(\lambda_t))\Phi)|\leq \hat{E}_{\alpha}(\omega(m(\lambda_y))\Phi_0)
\]
for all $\Phi \in C_0$, $\Phi_0$ being chosen as above.

To evaluate the second member of this integral, we apply Lemma \ref{Lem 22} to each of the measures $\hat{\mu}_b$ for $b\in \Her_n^{(\alpha)}(k)$; denoting by $c_b(\Phi')$ the constant which appears in that lemma when we substitute $\hat{\mu}_b$ with $\mu$, we obtain
\[
\hat{E}_{\alpha}(\omega(m(\lambda_y))\Phi_0)=\sum_{b\in \Her_n^{(\alpha)}(k)}c_b(\Phi')\int_{U(b)_v} \omega(m(\lambda_y))\Phi_v \cdot |\theta_b|_v.
\]
As we have $y_{\beta}=1$ for $\beta \geq \alpha+1$, it results from the definition of $\Her_n^{(\alpha)}(k)$ that the automorphism $\bar{\lambda}_y$ of $\Her_n(k_v)$ determined by $\lambda_y$ leaves invariant all the elements of $\Her_n^{(\alpha)}(k)$. Note that we have
\[
|\hat{E}_{\alpha}(\omega(m(\lambda_y))\Phi_0)|=|y_1 \ldots y_{\alpha}|_v^{(-m+2n+4\epsilon-2)\delta/2} \hat{E}_{\alpha}(\Phi_0).
\]
As we have assumed that $m>2n+4\epsilon-2$, the exponent of the second member is $<0$. As we have $|y_{\beta}|_v\geq 1$ for all $\beta$, we obtain
\[
|\hat{E}_{\alpha}(\omega(m(\lambda_t))\Phi_0)|\leq \hat{E}_{\alpha}(\Phi_0),
\]
this inequality being valid for all $t\in T(\BA)'$ and $\Phi \in C_0$. This completes the proof.

\end{proof}

Now we can prove the main result of this section, which is an analogue of Thm. 4 on p. 72 of \cite[n. 50]{Weil 1965}. Note that by Theorem \ref{Thm 3}, if $m>2n+4\epsilon-2$, then the Siegel Eisenstein series $E(\Phi)$ is absolutely convergent for any $\Phi \in \CS(X(\BA))$ and this gives a positive tempered measure $E$ on $X(\BA)$.

\begin{thm}\label{Thm 4}
Assume that $m>2n+4\epsilon-2$.
Let $v$ be a place of $k$ such that $U(0)_v$ is not empty,
and $H_v'$ a subgroup of $H_v$ acting transitively on $U(b)_v$ for any $b\in \Her_n(k)$.
Let $E'$ be a positive tempered measure on $X(\BA)$,
invariant under $G(k)$ and under $H_v'$,
and such that $E'-E$ is a sum of measures supported by $U(b)_{\BA}$ for $b\in \Her_n(k)$. Then we have $E'=E$.
\end{thm}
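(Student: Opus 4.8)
The plan is to set $\hat{E}=E'-E$ and to show $\hat{E}=0$. First I would check that both $E$ and $E'$ fall under the hypotheses of Lemma \ref{Lem 23}. By Theorems \ref{Thm 2} and \ref{Thm 3}, $E$ is a positive tempered measure, invariant under $G(k)$ (by construction of the Eisenstein series, since right translation by $g\in G(k)$ merely permutes $P(k)\bs G(k)$) and under $H_v'$ (each $\mu_b=|\theta_b|_{\BA}$ is $H(\BA)$-invariant, the gauge form $\theta_b$ being $H$-invariant up to sign as noted in the proof of Lemma \ref{Lem 22}), and it is a sum of measures supported by $U(b)_{\BA}$ for $b\in\Her_n(k)$. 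Since $E'-E$ is, by hypothesis, also a sum of measures supported by the $U(b)_{\BA}$, the measure $E'=E+(E'-E)$ is again a positive sum of measures supported by the $U(b)_{\BA}$, invariant under $G(k)$ and under $H_v'$.

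Next I would deduce boundedness. Applying Lemma \ref{Lem 23} separately to the two positive measures $E$ and $E'$ (both invariant under $T(k)\subset G(k)$ and under $H_v'$, with $H_v'$ acting transitively on each $U(b)_v$ by hypothesis), the functions $g\mapsto E(\omega(g)\Phi)$ and $g\mapsto E'(\omega(g)\Phi)$ are bounded on $T(\BA)'$, uniformly in $\Phi$ on every compact subset of $\CS(X(\BA))$; hence so is $g\mapsto\hat{E}(\omega(g)\Phi)$. Because $\hat{E}$ is invariant under $G(k)$ and $\wt{G(\BA)}=G(k)\cdot T(\BA)'\cdot \pi^{-1}(C)$ (as established before Lemma \ref{Lem 20}), Lemma \ref{Lem 20} with $T(\BA)''=T(\BA)'$ upgrades this to boundedness of $g\mapsto\hat{E}(\omega(g)\Phi)$ on all of $\wt{G(\BA)}$, uniformly in $\Phi$ on compacta.

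The final and hardest step is to convert this uniform boundedness into the vanishing of $\hat{E}$, following \cite[n.~50]{Weil 1965}. I would stratify $\Her_n(k)$ by the rank of $b$ and decompose $\hat{E}=\sum_b\hat{\mu}_b$ accordingly; by Lemma \ref{Lem 22}, at the distinguished place $v$ each component is rigid, of the shape $c_b(\Phi')\,|\theta_b|_v$ in the $v$-variable. The leverage comes from the central torus $Z_M(\BA)$: under $\omega(z)$ the Weil representation scales by the power $|a(z)|^{\alpha m/2}$, while the gauge-form measures $|\theta_b|_{\BA}$ scale with the opposite sign via formula (\ref{36}), so that for the top stratum the net homogeneity exponent is a negative multiple of $m-2n-4\epsilon+2$, which is strictly positive by hypothesis. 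Playing this homogeneity in $Z_M(\BA)$ against the single uniform bound, and using the local rigidity of Lemma \ref{Lem 22} together with the transitivity of $H_v'$ on the orbits $U(b)_v$, forces the constants $c_b(\Phi')$ attached to $\hat{E}=E'-E$ to vanish stratum by stratum; hence $\hat{E}=0$ and $E'=E$.

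I expect the bookkeeping in this last step --- tracking, across the strata, how each $\hat{\mu}_b$ transforms under $Z_M(\BA)$ and reconciling the distinct homogeneity degrees with the one uniform bound --- to be the main obstacle, just as it is the technical heart of Weil's argument. The hypotheses $m>2n+4\epsilon-2$, the non-emptiness of $U(0)_v$, and the transitivity of $H_v'$ on every $U(b)_v$ all enter precisely here.
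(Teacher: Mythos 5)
Your overall strategy is the right one and matches the paper's (both follow Weil, n.~50): bound $g\mapsto (E'-E)(\omega(g)\Phi)$ on $T(\BA)'$ via Lemma~\ref{Lem 23}, upgrade to $\wt{G(\BA)}$ via Lemma~\ref{Lem 20}, then kill each component using the rigidity of Lemma~\ref{Lem 22} and the negative homogeneity exponent $(-m+2n+4\epsilon-2)\delta/2$. But there is a concrete error in your second step: you apply Lemma~\ref{Lem 23} to $E$ and $E'$ themselves, asserting that $E$ ``is a sum of measures supported by $U(b)_{\BA}$.'' This is false for $n\geq 1$: by Theorem~\ref{Thm 3}, $E=\sum_{r=0}^{n}E_{X_r}$, and the lower strata $E_{X_r}$ with $r<n$ (starting with $E_{X_0}=\delta_0$) are carried by $X_r(\BA)$, which lies in the degenerate locus $F(\BA)=X(\BA)-\bigcup_b U(b)_{\BA}$ and meets no $U(b)_{\BA}$. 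Hence neither $E$ nor $E'$ satisfies the hypothesis of Lemma~\ref{Lem 23}, and your boundedness deduction does not go through as written. The missing idea is to restrict to the open set $X(\BA)-F(\BA)$ of maximal-rank points: $F(\BA)$ is closed and invariant under $H(\BA)$ and $\Aut(X_k)$, the restriction of $E$ to its complement is exactly $E_X=\sum_b\mu_b$, and the hypothesis on $E'-E$ forces the restrictions of $E$ and $E'$ to $F(\BA)$ to coincide; so one applies Lemma~\ref{Lem 23} to $E_X$ and to $\hat{E}:=E'|_{X(\BA)-F(\BA)}$, and still has $E'-E=\hat{E}-E_X$.

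The final step is also only gestured at, and it omits the mechanism that makes it work. From the single uniform bound $M(\Phi)$ on $|(E'-E)(\omega(g)\Phi)|$ you cannot directly attack each $c_b(\Phi')$ ``stratum by stratum'': you must first isolate the individual components $\mu_b''=\hat{\mu}_b-\mu_b$. The paper does this by translating by $n(b^*)$, reading off $(E'-E)(\omega(n(b^*))\Phi)=\sum_b\psi(\tfrac{\kappa}{2}\tau(bb^*))\int\Phi\,\dif\mu''_b$ as a Fourier series on the compact group $\Her_n(\BA)/\Her_n(k)$, and concluding $|\int\omega(g)\Phi\,\dif\mu''_b|\leq M(\Phi)$ for each $b$ separately. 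Only then does the scaling by $t\in T_v$ apply: one must also note that $b$ moves to $b'=b\bar{\lambda}_t$, invoke the continuity of $b'\mapsto\int_{U(b')_v}\Phi_v|\theta_{b'}|_v$ to pass to the limit $F(0)$ as $|t_\alpha|_v\to 0$, and use $U(0)_v\neq\emptyset$ to choose $\Phi_v$ with $F(0)\neq 0$, forcing $c_b(\Phi')=0$. These two ingredients (Fourier extraction of the $b$-components, and the limit $b'\to 0$ at the place $v$) are the actual content of the ``bookkeeping'' you defer, so as it stands the proposal has a genuine gap at both the boundedness step and the vanishing step.
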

\begin{proof}
With the notations of Theorems \ref{Thm 2} and \ref{Thm 3}, we have $E=\sum_{0\leq r\leq n} E_{X_r}$; $E_X$ is the sum of the measures $|\theta_b|_{\BA}$ respectively supported by $U(b)_{\BA}$, while $E_{X_r}$ has its support contained in $X_r(\BA)$ for any $r<n$. On the universal domain, let $U$ be the set of points of $X$ of maximal rank; it is $k$-open; it is an orbit for the group $\mathrm{Aut}(V)$; for any $b\in \Her_n(k)$, $U(b)$ is a subvariety of $U$ and is thus $k$-closed in $U$. Let $F=X-U$; it is a $k$-closed subset of $X$, invariant under the group $\mathrm{Aut}(V)$ and especially under $H\subset U(V)$, which contains $X_r$ whenever $r<n$. Consequently, $F(\BA)$ is a closed subset of $X(\BA)$, invariant under $H(\BA)$ and obviously also under $\mathrm{Aut}(X_k)$, which contains $X_r(\BA)$ for $r<n$ and has no common point with $U(b)_{\BA}$ for any $b\in \Her_n(k)$. It follows that $E_X$ is the restriction of $E$ to the open set $X(\BA)-F(\BA)$, and that the sum $\sum E_{X_r}$ over $0\leq r <n$ is the restriction of $E$ to $F(\BA)$. The hypothesis made on $E'$ then implies that the restriction $\hat{E}$ of $E'$ to $X(\BA)-F(\BA)$ is the sum of the measures $\hat{\mu}_b$ respectively supported by the $U(b)_{\BA}$ for $b\in \Her_n(k)$, and that the restriction of $E'$ to $F(\BA)$ is the same as that of $E$, so that we have $E'-\hat{E}=E-E_X$; furthermore, as $E'$ and $F(\BA)$ are invariant under $H_v'$ and under $\mathrm{Aut}(X_k)$, it is the same for $\hat{E}$, which thus satisfies the hypotheses of Lemma \ref{Lem 23}.

According to that lemma, the function $g\mapsto \hat{E}(\omega(g)\Phi)$ is bounded on $T(\BA)'$, uniformly in $\Phi$ on every compact subset of $\CS(X(\BA))$. This conclusion can be applied in particular to $E_X$, which is deduced from $E$ as $\hat{E}$ is from $E'$; it can be applied also to the tempered measure $E''$ given by
\[
E''=E'-E=\hat{E}-E_X.
\]
But this measure is invariant under $G(k)$, since $E$ and $E'$ are so; we can thus apply Lemma \ref{Lem 20}, which shows that the function $g\mapsto  E''(\omega(g)\Phi)$ is bounded on $\wt{G(\BA)}$, for any $\Phi \in \CS(X(\BA))$. For every $\Phi \in \CS(X(\BA))$, we denote by $M(\Phi)$ the supremum of $|E''(\omega(g)\Phi)|$ for $g \in \wt{G(\BA)}$; we have $M(\omega(g)\Phi)=M(\Phi)$ for all $g\in \wt{G(\BA)}$.

The measure $E''$ is the sum of the measures $\mu''_b=\hat{\mu}_b-\mu_b$, where $\mu_b$ denotes once again the measure $|\theta_b|_{\BA}$ supported by $U(b)_{\BA}$. We thus have, for $\Phi \in \CS(X(\BA))$:
\[
E''(\Phi)=\sum_{b\in \Her_n(k)}\int \Phi \,\dif \mu''_b;
\]
in this formula, the series of the second member is absolutely convergent, uniformly in $\Phi$ on every compact subset of $\CS(X(\BA))$, since it is obviously also the series similarly formed by means of the positive measures $\hat{\mu}_b$ and $\mu_b$. Let $b^*\in \Her_n(\BA)$, we then have, for $\Phi \in \CS(X(\BA))$:
\[
\omega(n(b^*))\Phi(x)=\Phi(x)\psi(q_{b^*}(x))),
\]
and consequently
\[
E''(\omega(n(b^*))\Phi)=\sum_{b\in \Her_n(k)}\psi(\frac{\kappa}{2}\tau(bb^*))\int \Phi \,\dif \mu''_b.
\]
We can consider this formula as giving the expansion of the first member into Fourier series on the compact group $\Her_n(\BA)/\Her_n(k)$. As the first member, in absolute value, is $\leq M(\Phi)$, we have, by the Fourier formulas, that
\[
|\int \Phi \, \dif \mu''_b|\leq M(\Phi),
\]
and consequently, replacing $\Phi$ by $\omega(g)\Phi$,
\begin{equation}
|\int \omega(g)\Phi \cdot \dif \mu''_b|\leq M(\Phi), \label{39}
\end{equation}
this integral being valid for all $g\in \wt{G(\BA)}$, $b\in \Her_n(k)$ and $\Phi \in \CS(X(\BA))$.

Taking for $\Phi$ the form $\Phi_v(x_v)\Phi'(x')$, with $\Phi_v\in \CS(X_v)$, $\Phi'\in \CS(X')$. By the hypotheses made on $E'$, the measures $\hat{\mu}_b$ are invariant under $H_v'$, and the same is true for $\mu_b$; we can thus apply Lemma \ref{Lem 22} to them. Consequently, we can write
\[
\int \Phi \,\dif \mu''_b=c_b(\Phi')\int_{U(b)_v} \Phi_v\cdot |\theta_b|_v.
\]
We now replace $\Phi$ by $\omega(m(\lambda_t))\Phi$ with $t\in T_v$ in this formula; this is equivalent to not changing $\Phi'$ but replacing $\Phi_v$ by $\omega(m(\lambda_t))\Phi_v$, the later function being given by the formula analogous to (\ref{37}). If we put $b'=b \bar{\lambda}_t$, then this gives, by (\ref{36}):
\begin{equation}
\int \omega(m(\lambda_t))\Phi \cdot \dif \mu''_b=\chi(m(\lambda_t)) c_b(\Phi')|t_1 \ldots t_n|_v^{(-m+2n+4\epsilon-2)\delta/2}\int_{U(b')_v} \Phi_v \cdot |\theta_{b'}|_v. \label{40}
\end{equation}
Denote by $F(b')$ the integral which appears in the second member; then Prop. 6 on p. 54 of \cite[n. 37]{Weil 1965} shows that it is a continuous function of $b'\in \Her_n(k_v)$, so that $F(b')$ tends to $F(0)$ when all the $|t_{\alpha}|_v$ tend to 0. As the exponent of $|t_1 \ldots t_n|_v$ in the second member of (\ref{40}) is $<0$ by the assumption $m>2n+4\epsilon-2$, and the second member must remain bounded for all $t\in T_v$, we conclude that $c_b(\Phi')F(0)=0$. But $F(0)$ is given by
\[
F(0)=\int_{U(0)_v}\Phi_v \cdot |\theta_0|_v,
\]
and, by hypothesis, $U(0)_v$ is not empty; we can thus choose $\Phi_v$ in such a way that $F(0)$ is not zero. We thus have $c_b(\Phi')=0$, and consequently $\int \Phi~ \dif \mu''=0$ whenever $\Phi$ is of the form $\Phi_v(x_v)\Phi'(x')$. This implies obviously $\mu''_b=0$. As this is so for any $b\in \Her_n(k_v)$, we thus have $E''=0$, i.e. $E'=E$.

\end{proof}
Observe that Theorem \ref{Thm 4} provides a characterization of the measure $E_X$, by induction
on the rank $n$ of $X$ over $M_m(D)$, from $E_{0}=\delta_0$.

\section{The theta integral and the Siegel-Weil formula}\label{Section 6}
In this section, we will first study the theta integrals and give a convergence criterion, and then prove the Siegel-Weil formula, which is an equality relating the Siegel Eisenstein series with the theta integral.

For $\Phi \in \CS(X(\BA))$, define the associated {\sl theta integral} by
\[
I(\Phi)=\int_{H(\BA)/H(k)}\sum_{\xi \in X(k)}\Phi(h\xi)\cdot \dif h,
\]
where $\dif h$ is the Haar measure on $H(\BA)$ such that $\vol(H(\BA)/H(k))=1$. Note that since we are integrating on the quotient group $H(\BA)/H(k)$ of left cosets, we take the action $\Phi(h\xi)=\omega(h^{-1})\Phi(\xi)$ here.

We have the following convergence criterion for the theta integral $I(\Phi)$, which is an analogue of Prop. 8 on p. 75 of \cite[n. 51]{Weil 1965}. Recall $r$ is the Witt index of the $\eta$-hermitian space $V$.

\begin{prop}\label{Prop 8}
The theta integral $I(\Phi)$ is absolutely convergent for any $\Phi \in \CS(X(\BA))$ whenever $r=0$ or $m-r>n+2\epsilon-1$.
\end{prop}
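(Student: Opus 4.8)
The plan is to deduce everything from Lemma \ref{Lem 5} and then evaluate the resulting sum over $\Theta^{+}$ explicitly. Taking $\CG=H$ and letting $\rho$ be the linear action $h\cdot(x_1,\ldots,x_n)=(hx_1,\ldots,hx_n)$ of $H$ on the affine space $X=V^n$, the theta integral $I(\Phi)$ is precisely the integral (\ref{14}) appearing in Lemma \ref{Lem 5} (the normalization $\vol(H(\BA)/H(k))=1$ does not affect convergence). Hence it suffices to prove that
\[
\int_{\Theta^{+}}\prod_{\lambda}\sup(1,|\lambda(\theta)|_{\BA}^{-1})^{m_{\lambda}}\cdot|\Delta_{P_0}(\theta)|_{\BA}^{-1}\,\dif\theta
\]
converges whenever $r=0$ or $m-r>n+2\epsilon-1$, where $\lambda$ runs over the weights of $\rho$ relative to $T$.

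The case $r=0$ is immediate: then $T$ is trivial, $\Theta(T)=\Theta^{+}$ is a single point, and the integral reduces to one finite term. So I would assume $r\geq 1$. By Lemma \ref{Lem 2.2} I identify $\Theta^{+}=\Theta(0)$ with the lattice cone $\{(\tau_1,\ldots,\tau_r)\in\BZ^r:0\leq\tau_r\leq\cdots\leq\tau_1\}$ through $\theta=(a_{\tau_1},\ldots,a_{\tau_r})$, with $\dif\theta$ the counting measure; recall $|a_{\tau}|_{\BA}=q^{-\tau}$, so $|x_i(\theta)|_{\BA}=q^{-\tau_i}$.

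Next I would compute the two factors of the integrand as functions of $(\tau_1,\ldots,\tau_r)$. Since $X=V^n$ is $n$ copies of the defining representation of $H$, and $T\cong(\BG_m)^r$ acts on $V=M_{m\times 1}(D)$ through $d(t)$ by scaling the $i$-th coordinate by $t_i$ ($1\leq i\leq r$), the middle $m-2r$ coordinates trivially, and the last $r$ coordinates by $t_i^{-1}$, the weights of $\rho$ are $x_i$ and $-x_i$ ($1\leq i\leq r$), each of $k$-multiplicity $n\delta$, together with the zero weight. Because $\tau_i\geq 0$ on $\Theta^{+}$, the factor for $x_i$ is $\sup(1,q^{\tau_i})=q^{\tau_i}$ while those for $-x_i$ and the zero weight equal $1$; thus the weight product equals $q^{n\delta(\tau_1+\cdots+\tau_r)}$. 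For the modular factor I would use $|\Delta_{P_0}(\theta)|_{\BA}^{-1}=\delta_{P_0(\BA)}(\theta)=\prod_{\alpha\in\Phi^{+}}|\alpha(\theta)|_{\BA}^{d_{\alpha}}$, where $\Phi^{+}$ consists of the roots $x_{\alpha}\pm x_{\beta}$ ($\alpha<\beta$) of multiplicity $\delta$, the long roots $2x_{\alpha}$ of multiplicity $(1-\epsilon)\delta=\dim_k\{\xi\in D:\bar\xi=-\eta\xi\}$, and, when $m>2r$, the short roots $x_{\alpha}$ of multiplicity $(m-2r)\delta$. Combining the two factors, the summand becomes $q^{E(\tau)}$ with $E(\tau)=\delta\sum_{i=1}^r(n-m+2\epsilon-2+2i)\tau_i$.

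Finally I would evaluate the cone sum $\sum_{0\leq\tau_r\leq\cdots\leq\tau_1}q^{E(\tau)}$. Writing $s_j=\tau_j-\tau_{j+1}\geq 0$ (with $\tau_{r+1}=0$), so that $E(\tau)=\sum_{j=1}^r s_j\,E(v_j)$ with $v_j=(1,\ldots,1,0,\ldots,0)$ ($j$ ones), the sum factors as $\prod_{j=1}^r\sum_{s_j\geq 0}q^{s_j E(v_j)}$ and converges if and only if $E(v_j)<0$ for every $j$. A short computation gives $E(v_j)=\delta\,j\,(n-m+2\epsilon-1+j)$, so the conditions read $j<m-n-2\epsilon+1$ for $1\leq j\leq r$; the binding one is $j=r$, namely $m-r>n+2\epsilon-1$, as desired. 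The main obstacle is the correct determination of the root multiplicities $d_{\alpha}$ of the relative root system of $H$ (these depend on $\epsilon$ and on the dimension $m-2r$ of the anisotropic kernel), since an error there would shift the final inequality; verifying the multiplicities $\delta$, $(1-\epsilon)\delta$, $(m-2r)\delta$ against the symplectic, orthogonal and unitary special cases is the safeguard I would rely on.
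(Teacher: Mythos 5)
Your proposal is correct and follows essentially the same route as the paper: reduce to Lemma \ref{Lem 5}, identify the weights $\pm x_i$ of multiplicity $n\delta$ and the root multiplicities giving $|\Delta_{P_0}(\theta)|_{\BA}^{-1}=\prod_i q^{-\delta\tau_i(m-2i+2-2\epsilon)}$ (the paper's Lemma \ref{Lem 5.2}), and then evaluate the cone sum, for which your substitution $s_j=\tau_j-\tau_{j+1}$ is a clean way of organizing the same iterated geometric series and yields the same binding condition $m-r>n+2\epsilon-1$ at $j=r$. The only cosmetic difference is that the paper treats $r=0$ directly via compactness of $H(\BA)/H(k)$ for anisotropic $H$ rather than as the degenerate case of Lemma \ref{Lem 5}.
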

\begin{proof}
If $r=0$, then $H$ is $k$-anisotropic and thus $H(\BA)/H(k)$ is compact (see for example Thm 5.1.1 on pp. 582--583 of \cite{Conrad 2012}), whence the theta integral is absolutely convergent.

Now assume $r>0$. Then we can choose a basis of $V$ for which the $\eta$-hermitian form on $V$ is given by a matrix of the form
\[
Q=\begin{pmatrix} 0 & 0 & 1_r \\ 0 & Q_0 & 0 \\ \eta\cdot 1_r & 0 & 0  \end{pmatrix},
\]
where $Q_0$ is the matrix (of order $m-2r$) of an anisotropic $\eta$-hermitian form.
Let $T\cong (\BG_m)^r$ be the maximal split torus in $H$ consisting of diagonal matrices of order $m$ whose diagonal elements are
\[
(t_1, \ldots, t_r, 1, \ldots, 1, t_1^{-1}, \ldots, t_r^{-1}).
\]
with each $t_i\in \BG_m$. Let $P_0$ be a minimal parabolic subgroup of $H^0$ which contains $T$.

Let $\rho: H \ra \Aut(X)$ be the representation given by $\rho(h)x=hx$. For each character $\lambda$ of $T$, let $m_{\lambda}$ be the multiplicity of $\lambda$. The weights of $\rho$ are $x_i$ and $-x_i$ for $1\leq i \leq r$, each with multiplicity $\delta n$.

By Lemma \ref{Lem 5}, it suffices to show that
\[
\int_{\Theta^{+}} \prod_{\lambda} \sup(1, |\lambda(\theta)|_{\BA}^{-1})^{m_{\lambda}}\cdot |\Delta_{P_0}(\theta)|_{\BA}^{-1} \,\dif \theta
\]
is convergent whenever $m-r>n+2\epsilon-1$.

Note that
\[
\Theta^{+}:=\Theta(0)=\{(a_{\tau_1}, \ldots, a_{\tau_r}): 0 \leq \tau_r \leq \ldots \leq \tau_1\}.
\]
For $\theta=(a_{\tau_1}, \ldots, a_{\tau_r})\in \Theta^{+}$, we have, by Lemma \ref{Lem 5.2}, that
\[
\Delta_{P_0}(\theta)^{-1}=\prod_{1\leq i \leq r} a_{\tau_i}^{\delta(m-2i+2-2\epsilon)},
\]
and hence
\[
|\Delta_{P_0}(\theta)|_{\BA}^{-1}=\prod_{1\leq i \leq r} q^{-\delta\tau_i(m-2i+2-2\epsilon)}.
\]

Thus we have
\[\begin{aligned}
&\quad \int_{\Theta^{+}}\prod_{\lambda} \sup(1, |\lambda(\theta)|_{\BA}^{-1})^{m_{\lambda}} \cdot |\Delta_{P_0}(\theta)|_{\BA}^{-1}\,\dif \theta\\
&=\sum_{0 \leq \tau_r \leq \ldots \leq \tau_1} \prod_{1\leq i \leq r} q^{-\delta \tau_i(m-n-2i+2-2\epsilon)}\\
&=c_1 \ldots c_{r-1}\sum_{\tau_r\geq 0} q^{-r\delta \tau_r(m-n-r+1-2\epsilon)},
\end{aligned}\]
where $c_j=(1-q^{-\delta \tau_j(m-n-j+1-2\epsilon)})^{-1}$.

Note that the above multiple series converges if and only if $m-n-r+1-2\epsilon>0$, i.e. $m-r>n+2\epsilon-1$.
The desired result follows.
\end{proof}

\begin{lem} \label{Lem 5.2}
For $\theta=(a_{\tau_1}, \ldots, a_{\tau_r})\in \Theta^{+}$, we have
\[
\Delta_{P_0}(\theta)^{-1}=\prod_{1\leq i \leq r} a_{\tau_i}^{\delta(m-2i+2-2\epsilon)}.
\]
\end{lem}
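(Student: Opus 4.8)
The plan is to evaluate the algebraic module $\Delta_{P_0}$ on the split torus $T$ by means of the root-space decomposition of $\Lie(H)$, in the spirit of \cite[n. 47--48]{Weil 1965}.

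First I would reduce the computation to the positive roots. Write $P_0=Z(T)\cdot U$ as in the proof of Lemma \ref{Lem 2.4}, with $\Lie(U)=\bigoplus_{\alpha\in\Phi^{+}}\mathfrak{g}_{\alpha}$, where $\Phi^{+}$ is the set of positive roots of $\CG^0=H^0$ relative to $T$. By the definition of the algebraic module through a gauge form, its value on $t\in T$ is governed by the determinant of $\Ad(t)$ on $\Lie(P_0)=\mathfrak{g}_0\oplus\Lie(U)$: on $\Lie(Z(T))=\mathfrak{g}_0$ (the zero weight space) $\Ad(t)$ is trivial, while on $\mathfrak{g}_{\alpha}$ it acts by the scalar $\alpha(t)$. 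Fixing the normalization so as to be compatible with $\delta_{U(\BA)}(g)=|\Delta_U(g)|_{\BA}^{-1}$, this yields
\[
\Delta_{P_0}(t)^{-1}=\prod_{\alpha\in\Phi^{+}}\alpha(t)^{m_{\alpha}},\qquad m_{\alpha}:=\dim_k \mathfrak{g}_{\alpha},
\]
for all $t\in T$, so the task reduces to listing $\Phi^{+}$ and the multiplicities $m_{\alpha}$.

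Next I would determine $\Phi^{+}$ and the $m_{\alpha}$ by a direct computation inside $\Lie(H)=\{Z:Z^{*}Q+QZ=0\}$, using the block form of $Q$ with blocks of sizes $r,m-2r,r$ and writing $Z=(Z_{ab})_{1\le a,b\le 3}$; here $T$ scales the first block (on $V_+$) by $(t_1,\dots,t_r)$, fixes the middle block (on $V_0$), and scales the third (on $V_-$) by $(t_1^{-1},\dots,t_r^{-1})$. Solving $Z^{*}Q+QZ=0$ block by block, the positive weight spaces are: $x_i-x_j$ with $i<j$, from the free $(i,j)$ entry of $Z_{11}$, of dimension $\delta$; $x_i+x_j$ with $i<j$, from the off-diagonal entries of the $(-\eta)$-hermitian block $Z_{13}\colon V_-\to V_+$, of dimension $\delta$; $2x_i$, from a diagonal entry $\xi$ of $Z_{13}$, which then satisfies $\bar\xi=-\eta\xi$; and, only when $m>2r$, the short root $x_i$, from the $i$-th row of $Z_{12}\colon V_0\to V_+$, which is free and determines $Z_{23}$. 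Since $\dim_k\{\xi\in D:\bar\xi=-\eta\xi\}=\delta-\delta'=(1-\epsilon)\delta$ (recall $\delta'=\epsilon\delta=\dim_k\{\xi:\bar\xi=\eta\xi\}$), one obtains $m_{x_i\pm x_j}=\delta$, $m_{2x_i}=(1-\epsilon)\delta$, and $m_{x_i}=(m-2r)\delta$. Finally I would substitute $\theta=(a_{\tau_1},\dots,a_{\tau_r})$, so that $x_i(\theta)=a_{\tau_i}$, and collect the exponent of $a_{\tau_i}$: the roots $x_{\bullet}\pm x_{\bullet}$ contribute $\delta(r-2i+1)+\delta(r-1)$, the root $2x_i$ contributes $2(1-\epsilon)\delta$, and $x_i$ contributes $(m-2r)\delta$, which add up to $\delta(m-2i+2-2\epsilon)$, giving exactly the asserted expression for $\Delta_{P_0}(\theta)^{-1}$.

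The delicate point will be the multiplicity bookkeeping, above all getting $m_{2x_i}=(1-\epsilon)\delta$ rather than $\epsilon\delta$: this hinges on the block $Z_{13}$ in $\Lie(H)$ being $(-\eta)$-hermitian (not $\eta$-hermitian), and on matching its diagonal against the definition of $\epsilon$. The limiting cases furnish the sign check: for $\epsilon=0$ (so $H=Sp(V)$, relative root system $C_r$) one needs $m_{2x_i}=1$, whereas for $\epsilon=1$ (so $H=O(V)$, relative root system of type $B$ or $D$) one needs $m_{2x_i}=0$, and both are reproduced by $(1-\epsilon)\delta$.
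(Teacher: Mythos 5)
Your proposal is correct and follows essentially the same route as the paper: the paper likewise writes $\Delta_{P_0}^{-1}|_T$ as the product of the positive roots with multiplicities $m_{x_i\pm x_j}=\delta$, $m_{x_i}=\delta(m-2r)$, $m_{2x_i}=\delta(1-\epsilon)$, and performs the same collection of exponents to reach $\sum_i \delta(m-2i+2-2\epsilon)x_i$. The only difference is that the paper cites \cite[p.~17]{Weil 1965} and \cite[p.~76]{Weil 1965} for the two inputs (the product-of-positive-roots formula and the root multiplicities) that you rederive directly from $\Lie(H)=\{Z: Z^{*}Q+QZ=0\}$; your multiplicity bookkeeping, including $m_{2x_i}=(1-\epsilon)\delta$ from the $(-\eta)$-hermitian block $Z_{13}$, agrees with the paper.
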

\begin{proof}
The restriction of $\Delta_{P_0}^{-1}$ to $T$ is the product of all the positive roots (see \cite[p. 17]{Weil 1965}). Now the positive roots are the following: $x_i-x_j$ and $x_i+x_j$ for $1\leq i <j \leq r$, each with multiplicity $\delta$; $x_i$ and $2x_i$ for $1\leq i \leq r$, with multiplicities $\delta(m-2r)$ and $\delta(1-\epsilon)$ respectively (see \cite[p. 76]{Weil 1965}). So the sum of all the positive roots are
\[
2\delta x_1(r-1)+2\delta x_2(r-2)+\ldots+2\delta x_{r-1}+\sum_{i=1}^r \delta(m-2r+2-2\epsilon)x_i=\sum_{i=1}^r \delta(m-2i+2-2\epsilon)x_i.
\]
\end{proof}

Now we can show the Siegel-Weil formula. We follow the proof of Thm. 5 on p. 76 of \cite[n. 52]{Weil 1965}.

\begin{thm}\label{Thm 5}
Assume that $m>2n+4\epsilon-2$.
Let $v$ be a place of $k$ such that $U(0)_v$ is not empty,
and $H_v'$ a subgroup of $H_v$ which acts transitively on $U(b)_v$ for any $b\in \Her_n(k)$.
 Let $\nu$ be a positive measure on $H(\BA)/H(k)$, invariant under $H_v'$, such that $\nu(H(\BA)/H(k))=1$
 and that the integral
\[
I_{\nu}(\Phi)=\int_{H(\BA)/H(k)}\sum_{\xi \in X(k)}\Phi(h\xi)\cdot d\nu(h)
\]
is absolutely convergent for any $\Phi \in \CS(X_A)$.
Then we have
\[
I_{\nu}(\Phi)=E(\Phi),
\]
and for every $b\in \Her_n(k)$ we have
\begin{equation}
\int_{H(\BA)/H(k)}\sum_{\xi \in U(b)_k}\Phi(h \xi)\cdot \dif \nu(h)=\int \Phi \, \dif \mu_b,  \label{41}
\end{equation}
where $\mu_b$ is the measure $|\theta_b|_{\BA}$ determined on $U(b)_{\BA}$ by the gauge form $\theta_b$
defined in Theorem \ref{Thm 2}.
\end{thm}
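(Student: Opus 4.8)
The plan is to deduce both assertions at once by induction on $n$, using the uniqueness Theorem \ref{Thm 4} to upgrade a ``same Fourier--Bruhat shape'' statement into the genuine equality $I_\nu=E$. When $n=0$ we have $X=\{0\}$, $G=\{1\}$, $U(0)_k=\{0\}$, so $E(\Phi)=\Phi(0)$ and $I_\nu(\Phi)=\nu(H(\BA)/H(k))\,\Phi(0)=\Phi(0)$, and both $I_\nu=E$ and (\ref{41}) hold by the normalization $\nu(H(\BA)/H(k))=1$. So I would fix $n\geq 1$ and assume the full statement for every pair $(G_r,H)$ with $r<n$; since $m>2n+4\epsilon-2$ implies $m>2r+4\epsilon-2$, Theorems \ref{Thm 2}--\ref{Thm 4} and the inductive hypothesis all apply to these smaller pairs.

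First I would establish that $I_\nu$ is a positive tempered measure invariant under $G(k)$ and under $H_v'$, so that it is a candidate for the measure $E'$ of Theorem \ref{Thm 4}. Temperedness follows from the assumed absolute convergence together with Lem. 5 of \cite[n. 41]{Weil 1964} and Lem. 2 of \cite[n. 2]{Weil 1965}. For $G(k)$-invariance I would use that the operators $\Phi\mapsto\omega(g)\Phi$ ($g\in G(k)$) and $\Phi\mapsto\Phi(h\cdot)$ ($h\in H(\BA)$) commute and that the theta kernel satisfies $\sum_{\xi\in X(k)}\omega(g)\Psi(\xi)=\sum_{\xi\in X(k)}\Psi(\xi)$ for $g\in G(k)$ (theta automorphy/Poisson summation); applying this to $\Psi=\omega(h^{-1})\Phi$ gives $I_\nu(\omega(g)\Phi)=I_\nu(\Phi)$. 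Invariance of $I_\nu$ under $H_v'$ is inherited from that of $\nu$ via the substitution $h\mapsto h_0^{-1}h$, $h_0\in H_v'$.

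Next I would run the decomposition parallel to Theorems \ref{Thm 2} and \ref{Thm 3}. Writing $I_X=\sum_{b\in\Her_n(k)}I_b$ for the contribution of the maximal-rank $\xi$ and, for $r<n$, letting $I_{X_r}$ denote the contribution of the rank-$r$ lattice points folded onto $X_r$ --- this folding uses that $M(k)$ acts geometrically, $\omega(m(a))\Phi=\Phi(\cdot a)$ for $a\in GL_n(D)(k)$, the scalar factor being trivial on rational points by the product formula and the automorphy of $\chi$ --- one identifies $I_{X_r}$ with the theta integral of the subpair $(G_r,H)$ evaluated at $\Phi_r=\Phi|_{X_r(\BA)}$, and obtains $I_\nu=I_X+\sum_{0\leq r\leq n-1}I_{X_r}$. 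The inductive hypothesis then gives $I_{X_r}=E_{X_r}$ for every $r<n$, so that with $E=E_X+\sum_{r<n}E_{X_r}$ and $E_X=\sum_b\mu_b$ from Theorems \ref{Thm 2}--\ref{Thm 3},
\[
I_\nu-E=I_X-E_X=\sum_{b\in\Her_n(k)}\bigl(I_b-\mu_b\bigr).
\]

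Finally I would check that each $I_b$ is supported by $U(b)_\BA$ in the sense of Theorem \ref{Thm 4}: decomposing $I_b$ over the $H(k)$-orbits of $U(b)_k$ and writing each orbital integral as the image under $j_\BA$ of a positive measure on $U(b)_\BA$ determined, via the stabilizer, by the uniqueness of invariant measures on homogeneous spaces. Since $\mu_b=|\theta_b|_\BA$ is supported by $U(b)_\BA$ by Theorem \ref{Thm 2}, the displayed difference $I_\nu-E$ is a sum of measures supported by $U(b)_\BA$. Theorem \ref{Thm 4}, applied with $E'=I_\nu$ (using the invariance established above and the hypotheses on $v$ and $H_v'$), then forces $I_\nu=E$, whence $I_X=E_X$; and because $I_b$ and $\mu_b$ are exactly the restrictions of $I_X$ and $E_X$ to $i_X^{-1}(\{b\})$, we get $I_b=\mu_b$, which is (\ref{41}). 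The main obstacle is the middle step: making the folding of the rank-$r$ stratum onto $X_r$ rigorous, so that $I_{X_r}$ is honestly the lower theta integral to which induction applies, together with the orbit--stabilizer argument that $I_b$ is supported by $U(b)_\BA$; once these are secured, the convergence and temperedness from Section \ref{Section 3} and the uniqueness Theorem \ref{Thm 4} close the argument.
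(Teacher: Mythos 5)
Your overall strategy coincides with the paper's: induction on $n$, with the base case $n=0$ reduced to the normalization of $\nu$; temperedness and $G(k)$- and $H_v'$-invariance of $I_\nu$ from Weil's lemmas; the decomposition of $I_\nu$ into the maximal-rank part $\sum_b I_b$ plus lower-rank strata matched with $E-E_X$ via the inductive hypothesis; supportedness of the $I_b$ by $U(b)_{\BA}$ via the $H(k)$-orbit structure of $U(b)_k$; and finally Theorem \ref{Thm 4} applied to $E'=I_\nu$. However, there is a genuine gap at the point where you assert that ``since $m>2n+4\epsilon-2$ implies $m>2r+4\epsilon-2$, \dots the inductive hypothesis [applies] to these smaller pairs.'' The inductive hypothesis is the full statement of Theorem \ref{Thm 5} for the module $X_r$, and its hypotheses are not just the numerical inequality: they require that the \emph{same} place $v$ and the \emph{same} subgroup $H_v'$ satisfy, relative to $X_r$, the conditions that $U_r(0)_v$ be non-empty and that $H_v'$ act transitively on $U_r(b')_v$ for every $b'\in \Her_r(k)$, where $U_r(b')$ denotes the maximal-rank locus in $i_{X_r}^{-1}(\{b'\})$. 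Neither condition is formally contained in the hypotheses you are given, which concern only the sets $U(b)_v\subset X_v$ for $b\in \Her_n(k)$.

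This verification is precisely the first (and most substantial) part of the paper's proof, and it is not routine. Non-emptiness of $U_r(0)_v$ is deduced from that of $U(0)_v$ by translating both into Witt-index conditions on $Q_v$, with a case distinction according to whether $\CA_v=M_m(D)\otimes_k k_v$ is of type (I) or type (II). Transitivity of $H_v'$ on $U_r(b')_v$ is obtained by an extension argument: setting $b=\begin{pmatrix} b' & 0\\ 0 & 0\end{pmatrix}$, one shows $U(b)_v\neq\emptyset$, picks $a\in U(b)_v$ whose projection $a'$ lies in $U_r(b')_v$, and, given any $x'\in U_r(b')_v$, invokes Prop. 3 of \cite[n. 22]{Weil 1965} to find $h\in H_v$ with $x'=ha'$; since $a$ and $ha$ both lie in $U(b)_v$, transitivity of $H_v'$ upstairs gives $h'\in H_v'$ with $ha=h'a$, whence $x'=h'a'$. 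Without this step your induction does not close, because the measures $I_{X_r}$ cannot legitimately be identified with $E_{X_r}$ by the inductive hypothesis. (By contrast, the ``folding'' of the rank-$r$ stratum onto $X_r$ via the rational $M(k)$-action, which you flag as your main obstacle, is treated just as briskly in the paper's own proof, so I would not count that against you; the omission of the descent of the hypotheses on $v$ and $H_v'$ is the real gap.)
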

\begin{proof}
We proceed by induction on the rank $n$ of $X$ over $\CA$, where $\CA=M_m(D)$ is equipped with an involution given by $x\mapsto Q^{-1} \cdot x^* \cdot Q$, here $Q$ is the invertible $\eta$-hermitian matrix over $D$ of order $m$ which is used to define the $\eta$-hermitian form on $V$. We fist show that the hypotheses made on
$v$ and $\nu$ with regard to $X$ imply that $v$ and $\nu$ have similar properties with regard to each $\CA$-module $X'$ of rank $n'\leq n$. Since all the $\CA$-modules of the same rank are isomorphic,
it suffices to prove this assertion when $X'$ is a submodule of $X$. Concerning the condition imposed
on $\nu$, the assertion is obvious. Concerning the place $v$, let $X_v=X\otimes_k k_v$, $X'_v=X'\otimes_k k_v$, and $\CA_v=\CA\otimes_k k_v$. Then $X_v$ and $X'_v$ are $\CA_v$-modules of rank denoted respectively by $n_v$ and $n'_v$. For every $b'\in \Her_{n'}(k)$, we denote by $U'(b')_v$ the set of elements $x'$ of $X_v'$, of maximal rank in $X_v'$, which satisfy $i_{X'}(x')=b'$. Consider first the hypothesis $U(0)_v\neq \emptyset$, which we want to show that it implies $U'(0)_v\neq \emptyset$.
Assume first that $\CA_v$ is of type (I), i.e. $\CA_v$ is of the form $M_{m_v}(\mathfrak{K})$, where $m_v$ is a positive integer and $\mathfrak{K}$ is a division algebra over $k_v$; the involution on $\CA_v$ is then defined by an involution on $\mathfrak{K}$, and by a matrix $Q_v \in M_{m_v}(\mathfrak{K})$ which is $\eta_v$-hermitian with respect to this involution. Saying  that $U(0)_v \neq \emptyset$ then amounts to saying that $Q_v$ is of Witt index $\geq n_v$, and $U'(0)_v\neq \emptyset$ amounts similarly to saying that this index is $\geq n_v'$; for $n'\leq n$, the first assertion implies obviously the second.
If $\CA_v$ is of type (II), i.e. it is of the form $\CA_v=M_{m_v}(\mathfrak{K})\oplus M_{m_v}(\mathfrak{K}')$, where $\mathfrak{K}$ and $\mathfrak{K}'$ are two division algebras over $k_v$ which are anti-isomorphic, then $U(0)_v \neq \emptyset$ and $U'(0)_v\neq \emptyset$ are respectively equivalent to $m_v\geq 2n_v$ and $m_v\geq 2n_v'$ by \cite[n. 23]{Weil 1965}, and we draw the same conclusion; besides we note that, in this case, $\epsilon=\frac{1}{2}$ by \cite[n. 26]{Weil 1965}, thus $m>2n$ by the assumption $m>2n+4\epsilon-2$, so that certainly $U(0)_v \neq \emptyset$ and $U'(0)_v\neq \emptyset$. Next we consider the transitivity of $H_v'$ on the sets $U(b)_v$, $U'(b')_v$. Recall $X=M_{m\times n}(D)$, and identify $X'$ with the submodule of $X$ of elements of the form $(x_1, \ldots, x_{n'}, 0, \ldots, 0)$; denote by $X''$ the submodule of $X$ of elements of the form $(0, \ldots, 0, x_{n'+1}, \ldots, x_n)$, so that $X=X'\oplus X''$. Let $b'\in \Her_{n'}(k)$, and let $b$ be the element of $\Her_n(k)$ given by the matrix $\begin{pmatrix} b' & 0\\ 0 & 0 \end{pmatrix}$ with $n$ lines and $n$ columns. Assume first that $\CA_v$ is of type (I); with the same notations as above, we can identify $\Her_n(k_v)$ with the space of $\eta_v$-hermitian matrices of $n_v$ lines and $n_v$ columns  over $\mathfrak{K}$, and do the same for $\Her_{n'}(k_v)$. The canonical isomorphism of $\Her_n(k)\otimes k_v$ onto $\Her_n(k_v)$ induces a $k$-linear mapping of $\Her_n(k)$ into $\Her_n(k_v)$. Let $b_v \in \Her_n(k_v)$ be the image of $b$ under this mapping; if $b_v'$ is the element of $\Her_{n'}(k_v)$ deduced similarly from $b'$, then $b_v=\begin{pmatrix} b_v' & 0\\ 0 & 0 \end{pmatrix}$. By \cite[n. 19]{Weil 1965}, $U(b)_v$ can be identified with the set of matrices $x$ of $m_v$ lines and $n_v$ columns over $\mathfrak{K}$, of maximal rank (i.e. equal to $n_v$), which satisfy $Q_v[x]:=x^*\cdot Q_v \cdot x=b_v$; we have a similar assertion for $U'(b')_v$. By hypothesis, $U(0)_v$ is not empty, which means that $Q_v$ is of Witt index $\geq n_v$; we deduce easily that $U(b)_v$ is not empty; and we choose $a\in U(b)_v$. Then, if $a'$, $a''$ are the projections of $a$ on $X_v'$, $X_v''$ for the decomposition $X_v=X_v'\oplus X_v''$, we have
\[
Q_v[a]=Q_v[(a', a'')]=b_v=\begin{pmatrix} b_v' & 0\\ 0 & 0 \end{pmatrix},
\]
thus $Q_v[a']=b_v'$; moreover, as $a$ is of maximal rank (equal to $n_v$) in $X_v$, $a'$ must be of maximal rank (equal to $n_v'$) in $X_v'$; thus $a'\in U'(b')_v$. Let then $x'\in U'(b')_v$. By Prop. 3 of \cite[n. 22]{Weil 1965}, there exists $h\in H_v$ such that $x'=ha'$, Then $a$ and $ha$ both belong to $U(b)_v$, so that, by hypothesis, there exists $h'\in H_v'$ such that $ha=h'a$, whence $x'=h'a'$. This shows that $H_v'$ acts transitively on $U'(b')_v$. The proof is similar when $\CA_v$ is of type (II).

Let then $v$ and $\nu$ satisfy the hypotheses of Theorem \ref{Thm 5} with regard to the module $X$.
For $n=0$, the assertion of the theorem is reduced to $I_{\nu}=\delta_0$, which is an obvious consequence of the hypothesis $\nu(H(\BA)/H(k))=1$, here $\delta_0$ is the measure on $X(\BA)$ given by $\delta_0(\Phi)=\Phi(0)$.
We proceed by induction on $n$, and suppose $n\geq 1$. Since by hypothesis $I_{\nu}(\Phi)$ is convergent for any $\Phi \in \CS(X(\BA))$, Lem. 2 on p. 5 of \cite[n. 2]{Weil 1965} joint with Lem. 5 on p. 194 of \cite[n. 41]{Weil 1964} show immediately that $I_{\nu}$ is a positive tempered measure. Now Thm. 6 on p. 193 of \cite[n. 41]{Weil 1964} and Prop. 9 on p. 210 of \cite[n. 51]{Weil 1964} show that $I_{\nu}$ is invariant under $G(k)$; it is also obviously invariant under $H_v$ for a place $v$ of $k$ such that $U(0)_v$ is non-empty.
Similarly, if we denote by $I_{\nu, b}(\Phi)$ the first member of (\ref{41}), then $I_{\nu, b}$ is a positive tempered measure. Let $I_{\nu, X}$ be the sum of $I_{\nu, b}$ for $b\in \Her_n(k)$; we can consider $I_{\nu, X}$ as defined by the integral similar to that which defines $I_{\nu}$, but where the summation is restricted to the elements $\xi$ of $X(k)$ which are of maximal rank in $X(k)$. Similarly, for the submodule $X_r$ of $X$, where $0\leq r \leq n-1$, denote by $I_{\nu, X_r}$ the positive tempered measure defined by the integral similar to that which defines $I_{\nu, X}$, but where the summation is restricted to the elements $\xi$ of $X_r(k)$ which are of maximal rank in $X_r(k)$. Taking into account of Theorem \ref{Thm 2}, we see that Theorem \ref{Thm 5} for $X$ implies that $I_{\nu, X}=E_X$; as a result, the induction hypothesis implies that $I_{\nu, X_r}=E_{X_r}$ for the submodule $X_r$ of $X$ whenever $r<n$. Thus we have, by this hypothesis:
\begin{equation}
I_{\nu}=\sum_{b\in \Her_n(k)} I_{\nu, b}+\sum_{0 \leq r \leq n-1} E_{X_r}. \label{42}
\end{equation}
According to Theorem \ref{Thm 3}, the second sum of the second member is just $E-E_X$. On the other hand, according to Prop. 3 on p. 34 of \cite[n. 22]{Weil 1965}, those of $U(b)_k$ which are non-empty are orbits of $H(k)$ in $X(k)$; then formula (11) on p. 15 of \cite[n. 7]{Weil 1965} shows that the measures $I_{\nu, b}$ are respectively supported by $U(b)_{\BA}$. Consequently, $I_{\nu}$ satisfies all the hypotheses of Theorem \ref{Thm 4}, thus $I_{\nu}=E$, and $I_{\nu, X}=E_X$ by (\ref{42}). As $I_{\nu, b}$ and $\mu_b$ are the restrictions of $I_{\nu, X}$ and of $E_X$ to the set $i_X^{-1}(\{b\})$ respectively, it follows that $I_{\nu, b}=\mu_b$ for any $b\in \Her_n(k)$.

This completes the proof of this theorem.
\end{proof}

Now we take for $\nu$ the Haar measure on $H(\BA)/H(k)$ normalized by $\vol(H(\BA)/H(k))=1$; then $I_{\nu}(\Phi)$ is just the theta integral $I(\Phi)$ defined before. We choose a place $v$ of $k$ such that $U(0)_v$ is not empty (this holds for almost every $v$), and take $H_v'=H_{v}$. Then this Haar measure and this place $v$ satisfy the hypotheses of Theorem \ref{Thm 5} (as can be seen from the proof of this theorem).
Note that if $m>2n+4\epsilon-2$, then this implies automatically that $r=0$ or $m-r>n+2\epsilon-1$ (since $r\leq \frac{m}{2}$), and thus $I(\Phi)$ is absolutely convergent, and $I(\Phi)=E(\Phi)$ by the above theorem, for any $\Phi \in \CS(X(\BA))$.

More generally, for $\Phi \in \CS(X(\BA))$ and $g\in \wt{G(\BA)}$, let
\[
I(g,\Phi)=I(\omega(g)\Phi).
\]
Then the theta integral $I(g,\Phi)$ is absolutely convergent whenever $r=0$ or $m-r>n+2\epsilon-1$.

\begin{cor}
Assume $m>2n+4\epsilon-2$. Then for all $\Phi \in \CS(X(\BA))$ and $g\in \wt{G(\BA)}$,

(i) $I(g, \Phi)$ is absolutely convergent, $E(g,s,\Phi)$ is holomorphic at $s=s_0$, where $s_0=\alpha(m-n+1-2\epsilon)/2$;

(ii) moreover, we have
\[
I(g,\Phi)=E(g,s_0,\Phi).
\]
\end{cor}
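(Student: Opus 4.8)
The plan is to deduce the entire corollary from the identity $I(\Psi)=E(\Psi)$ for all $\Psi\in\CS(X(\BA))$, which was recorded immediately after Theorem~\ref{Thm 5}, by applying it to the translated Schwartz--Bruhat function $\Psi=\omega(g)\Phi$. First I would settle the convergence and holomorphy assertions in (i). For the theta integral, I would observe that $I(g,\Phi)=I(\omega(g)\Phi)$ and that $\omega(g)$ maps $\CS(X(\BA))$ to itself, so $\omega(g)\Phi$ is again a Schwartz--Bruhat function; then Proposition~\ref{Prop 8} gives absolute convergence once I check that $m>2n+4\epsilon-2$ forces $r=0$ or $m-r>n+2\epsilon-1$. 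This is an elementary inequality: since the Witt index satisfies $r\leq m/2$, one has $m-r\geq m/2>n+2\epsilon-1$ exactly when $m>2n+4\epsilon-2$, the case $r=0$ being covered directly by the hypothesis of Proposition~\ref{Prop 8}. For the Eisenstein series, holomorphy at $s_0$ is precisely the last assertion of Theorem~\ref{Thm 1}: that theorem gives absolute convergence, uniform in $\Phi$ on compact subsets (hence holomorphy in $s$), for $\Re(s)>\alpha(n+2\epsilon-1)/2$, and since $\Re(s_0)=\alpha(m-n+1-2\epsilon)/2$, the inequality $\alpha(m-n+1-2\epsilon)/2>\alpha(n+2\epsilon-1)/2$ is again equivalent to $m>2n+4\epsilon-2$, placing $s_0$ strictly inside the region of holomorphy.

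For the equality in (ii), I would apply the specialization of Theorem~\ref{Thm 5} (with $\nu$ the Haar measure normalized by $\vol(H(\BA)/H(k))=1$, the place $v$ chosen so that $U(0)_v$ is non-empty, and $H_v'=H_v$, which as explained after Theorem~\ref{Thm 5} satisfy all the hypotheses there) to the function $\omega(g)\Phi$. This yields
\[
I(g,\Phi)=I(\omega(g)\Phi)=E(\omega(g)\Phi).
\]
It then remains to identify $E(\omega(g)\Phi)$ with $E(g,s_0,\Phi)$. Unwinding the definition $E(\Psi)=E(1,s_0,\Psi)=\sum_{\gamma\in P(k)\bs G(k)}\omega(\gamma)\Psi(0)$ with $\Psi=\omega(g)\Phi$, and using that $\omega$ is a representation of $\wt{G(\BA)}$ so that $\omega(\gamma)\omega(g)=\omega(\gamma g)$, I obtain
\[
E(\omega(g)\Phi)=\sum_{\gamma\in P(k)\bs G(k)}\bigl(\omega(\gamma g)\Phi\bigr)(0).
\]
Since $f_{\Phi}^{(s_0)}(h)=|a(h)|^{s_0-s_0}\omega(h)\Phi(0)=\omega(h)\Phi(0)$, the right-hand side is exactly $\sum_{\gamma}f_{\Phi}^{(s_0)}(\gamma g)=E(g,s_0,\Phi)$, which is the desired identity.

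I do not expect a deep obstacle; the corollary is essentially a repackaging of Theorem~\ref{Thm 5} together with the definition of the Eisenstein series at the distinguished point $s_0$. The one point that will require care is the metaplectic bookkeeping in the exceptional case $\epsilon=1$ with $m$ odd, where $\wt{G(\BA)}$ is a genuine double cover. There I must make sure the sum over $\gamma\in P(k)\bs G(k)$ is taken with $G(k)$ embedded into $\wt{G(\BA)}$ via $\gamma\mapsto(\gamma,1)$, so that Rao's cocycle does not interfere. Because $\omega$ is a bona fide representation of $\wt{G(\BA)}$ and $f_{\Phi}^{(s_0)}$ is defined as a function on $\wt{G(\BA)}$, both the multiplicativity $\omega(\gamma)\omega(g)=\omega(\gamma g)$ and the evaluation $f_{\Phi}^{(s_0)}(\gamma g)=\omega(\gamma g)\Phi(0)$ hold verbatim, and the argument goes through without change.
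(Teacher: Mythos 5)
Your proposal is correct and follows the same route as the paper: the paper's (implicit) proof of this corollary is exactly the discussion preceding it, namely specializing Theorem~\ref{Thm 5} to the normalized Haar measure with $H_v'=H_v$, noting that $r\le m/2$ makes $m>2n+4\epsilon-2$ imply $r=0$ or $m-r>n+2\epsilon-1$ so Proposition~\ref{Prop 8} applies, invoking Theorem~\ref{Thm 1} for holomorphy at $s_0$, and translating by $\omega(g)$. Your unwinding of $E(\omega(g)\Phi)=E(g,s_0,\Phi)$ via $f_\Phi^{(s_0)}(\gamma g)=\omega(\gamma g)\Phi(0)$ and your remark on the metaplectic bookkeeping are both accurate.
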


{\bf Remark 1}. Let $H_1=\{h\in H: \nu_K(h)=1\}$, where $\nu_K: M_m(D) \ra K$ is the reduced norm, and we recall that $K$ is the center of the division algebra $D$. For example, if $D=k$ and $V$ is a non-degenerate quadratic space over $k$, then $H=O(V)$ and $H_1=H^0=SO(V)$. When $\epsilon=\frac{3}{4}$ and $m\geq 2$, or $\epsilon=1$ and $m\geq 3$, we denote by $\tilde{H}$ the simply connected covering of $H_1$ (the spin group). Now we can define the integrals $I_1(\Phi)$ and $\tilde{I}(\Phi)$ by substituting $H_1$ and $\tilde{H}$ (when defined) for $H$ in the definition of the theta integral $I(\Phi)$ respectively, with similarly normalized Haar measures. As in \cite[n. 51--52]{Weil 1965}, we can show that $I_1(\Phi)$ and $\tilde{I}(\Phi)$ are absolutely convergent for any $\Phi \in \CS(X(\BA))$ whenever $r=0$ or $m-r>n+2\epsilon-1$, and we can show analogues of Theorem \ref{Thm 5} for $I_1(\Phi)$ and $\tilde{I}(\Phi)$.  We omit the details here. \\

{\bf Remark 2}. As in \cite[n. 53--56]{Weil 1965}, Theorem \ref{Thm 5} can be applied to study the Tamagawa numbers and the approximation theorems for groups of the form $H$, $H_1$ or $\tilde{H}$ defined above. We refer to \cite{Weil 1965} for more information.\\

{\bf Remark 3}. As in \cite{Ral84} and \cite{Li92}, we can derive a Rallis inner product formula over function fields from the Siegel-Weil formula established here and the basic identity of Piatetski-Shapiro and Rallis (\cite[p. 3]{PSR}). Since the Siegel-Weil formula established here only applies for $m>2n+4\epsilon-2$, the corresponding Rallis inner product formula is quite restrictive, and to establish a more general Rallis inner product formula will require more general Siegel-Weil formulas, as in the number field case (cf. \cite{Kudla-Rallis 1994}, \cite{Gan-Takeda 2011}, \cite{Yamana 2014}, \cite{Gan-Qiu-Takeda 2014}). The derivation of the Rallis inner product formula over function fields is similar to that over number fields, and we omit the details.


\begin{thebibliography}{XXXX}
\addtocontents{Bibliography}

\bibitem{Borel 1991}
Armand Borel. \textit{Linear Algebraic Groups} (Second Enlarged Edition). Graduate Texts in Mathematics, \textbf{126}. Springer-Verlag, New York, 1991.

\bibitem{Bourbaki 1963}
N. Bourbaki. \textit{Int\'{e}gration, Chapitres 7 et 8} (Reprint of the 1963 Edition). Springer-Verlag, Berlin, 2007.

\bibitem{Conrad 2012}
Brian Conrad. \textit{Finiteness theorems for algebraic groups over function fields}. Compositio Math. \textbf{148} (2012), 555--639.

\bibitem{Gan-Takeda 2011}
Wee Teck Gan and Shuichiro Takeda. \textit{On the regularized Siegel-Weil formula (the second term identity) and non-vanishing of theta lifts from orthogonal groups}. J. Reine Angew. Math. \textbf{659} (2011), 175--244.

\bibitem{Gan-Qiu-Takeda 2014}
Wee Teck Gan, Yannan Qiu, and Shuichiro Takeda. \textit{The regularized Siegel-Weil formula (the second term identity) and the Rallis inner product formula}. Invent. Math. \textbf{198} (2014), no. 3, 739--831.


\bibitem{Godement 1963}
Roger Godement. \textit{Domaines fondamentaux des groupes arithm\'{e}tiques}. S\'{e}minaire Bourbaki, Volume \textbf{8} (1962--1964), Talk no. 257, p. 201--225.


\bibitem{Godement 1967}
Roger Godement. \textit{Introduction \`{a} la th\'{e}orie de Langlands}. S\'{e}minaire Bourbaki, Volume \textbf{10} (1966--1968), Talk no. 321, p. 115--144.


\bibitem{Harder 1969}
G. Harder. \textit{Minkowskische Reduktionstheorie \"{u}ber Funktionenkorpern}. Invent. Math. \textbf{7} (1969), 33--54.

\bibitem{Harder 1974}
G. Harder. \textit{Chevalley groups over function fields and automorphic forms}. Ann. of Math. \textbf{100} (1974), no. 2, 249--306.

\bibitem{Haris 1974}
Stephen J. Haris. \textit{A Siegel formula for orthogonal groups over a function field}. Trans. Amer. Math. Soc. \textbf{190} (1974), 223--231.

\bibitem{Harris-Kudla-Sweet 1996}
Michael Harris, Stephen S. Kudla, and William J. Sweet. \textit{Theta dichotomy for unitary groups}. J. Amer. Math. Soc. \textbf{9} (1996), no. 4, 941--1004.

\bibitem{Howe 1979}
R. Howe. \textit{$\theta$-series and invariant theory}. In: Automorphic forms, representations and $L$-functions (Proc. Sympos. Pure Math., Oregon State Univ., Corvallis, Ore., 1977), Part 1, pp. 275--285, Proc. Sympos. Pure Math., XXXIII, Amer. Math. Soc., Providence, R.I., 1979.

\bibitem{Ichino 2001}
Atsushi Ichino. \textit{On the regularized Siegel-Weil formula}. J. Reine Angew. Math. \textbf{539} (2001), 201--234.

\bibitem{Ichino 2004}
Atsushi Ichino. \textit{A regularized Siegel-Weil formula for unitary groups}. Math. Z. \textbf{247} (2004), no. 2, 241--277.

\bibitem{Ichino 2007}
Atsushi Ichino. \textit{On the Siegel-Weil formula for unitary groups}. Math. Z. \textbf{255} (2007), no. 4, 721--729.

\bibitem{Ikeda 1996}
Tamotsu Ikeda. \textit{On the residue of the Eisenstein series and the Siegel-Weil formula}. Compositio Math. \textbf{103} (1996), no. 2, 183--218.

\bibitem{Jiang-Soudry 2007}
Dihua Jiang and David Soudry. \textit{On the genericity of cuspidal automorphic forms of $SO(2n+1)$. II.} Compos. Math. \textbf{143} (2007), no. 3, 721--748.

\bibitem{Kudla 1994}
Stephen S. Kudla. \textit{Splitting metaplectic covers of dual reductive pairs}. Israel J. Math. \textbf{87} (1994), no. 1-3, 361--401.

\bibitem{Kudla 1996}
Stephen S. Kudla. \textit{Notes on the local theta correspondence}. Notes from 10 lectures given at the European School on Group Theory in September 1996.

\bibitem{Kudla-Rallis 1988a}
Stephen S. Kudla and Stephen Rallis. \textit{On the Weil-Siegel formula}. J. Reine Angew. Math. \textbf{387} (1988), 1--68.

\bibitem{Kudla-Rallis 1988b}
Stephen S. Kudla and Stephen Rallis.  \textit{On the Weil-Siegel formula. II. The isotropic convergent case}. J. Reine Angew. Math. \textbf{391} (1988), 65--84.

\bibitem{Kudla-Rallis 1994}
Stephen S. Kudla and Stephen Rallis.  \textit{A regularized Siegel-Weil formula: the first term identity}. Ann. of Math. \textbf{140} (1994), no. 1, 1--80.

\bibitem{Kudla-Sweet 1997}
Stephen S. Kudla and W. Jay Sweet, Jr. \textit{Degenerate principal series representations for $U(n,n)$}. Israel J. Math. \textbf{98} (1997), 253--306.

\bibitem{LR}
Erez M. Lapid and Stephen Rallis. \textit{On the local factors of representations of classical groups}. In: Automorphic representations, $L$-functions and applications: progress and prospects, 309--359, Ohio State Univ. Math. Res. Inst. Publ., 11, de Gruyter, Berlin, 2005.

\bibitem{Li92}
Jian-Shu Li. \textit{Nonvanishing theorems for the cohomology of certain arithmetic quotients}. J. Reine Angew. Math. \textbf{42} (1992), 177--217.

\bibitem{Moe97}
C. Moeglin. \textit{Non nullit\'{e} de certains rel\^{e}vements par s\^{e}ries th\'{e}ta}. J. Lie Theory \textbf{7} (1997), 201--229.

\bibitem{Moeglin-Waldspurger 1995}
C. Moeglin and J-L. Waldspurger. \textit{Spectral Decomposition and Eisenstein Series}. Cambridge Tracts in Mathematics, \textbf{113}. Cambridge University Press, Cambridge, 1995.

\bibitem{Morris 1982}
L. E. Morris. \textit{Eisenstein series for reductive groups over global function fields. I. The cusp form case.} Canad. J. Math. \textbf{34} (1982), no. 1, 91--168.

\bibitem{Morris 1983}
Lawrence Morris. \textit{Some remarks on Eisenstein series for metaplectic coverings.} Canad. J. Math. \textbf{35} (1983), no. 6, 974--985.

\bibitem{Oesterle 1984}
Joseph Oesterl\'{e}. \textit{Nombres de Tamagawa et groupes unipotentes en caract\'{e}ristique $p$.} Invent. Math. \textbf{78} (1984), 13--88.

\bibitem{PSR}
I. Piatetski-Shapiro and S. Rallis. \textit{$L$-functions for the classical groups}. In: Explicit constructions of automorphic L-functions, Lecture Notes in Math., \textbf{1254}, Springer, New York, 1987, 1--52.


\bibitem{Ral84}
S. Rallis. \textit{Injectivity properties of liftings associated to Weil representations}. Compositio Math. \textbf{52} (1984), no. 2, 139--169.



\bibitem{Rallis 1987}
Stephen Rallis. \textit{$L$-Functions and the Oscillator Representation}. Lecture Notes in Mathematics, \textbf{1245}. Springer-Verlag, Berlin, 1987.



\bibitem{Ramakrishnan-Valenza 1999}
Dinakar Ramakrishnan and Robert J. Valenza. \textit{Fourier Analysis on Number Fields}. Graduate Texts in Mathematics, \textbf{186}. Springer-Verlag, New York, 1999.

\bibitem{Rao 1993}
R. Ranga Rao. \textit{On some explicit formulas in the theory of Weil representation.} Pacific J. Math. \textbf{157} (1993), no. 2, 335--371.

\bibitem{Springer 1994}
T. A. Springer. \textit{Reduction theory over global fields.} Proc. Indian Acad. Sci. Math. Sci. \textbf{104} (1994), no. 1, 207--216.

\bibitem{Tan 1998}
Victor Tan. \textit{A regularized Siegel-Weil formula on $U(2, 2)$ and $U(3)$}. Duke Math. J. \textbf{94} (1998), no. 2, 341--378.

\bibitem{Urt}
Cetin \"{U}rtis. \textit{Special values of $L$-functions by a Siegel-Weil-Kudla-Rallis formula}. J. Number Theory \textbf{125} (2007), no. 1, 149--181.

\bibitem{Wei 2015}
Fu-Tsun Wei.  \textit{On the Siegel-Weil formula over function fields.} Asian J. Math. \textbf{19} (2015), no. 3, 487--526.


\bibitem{Weil 1964}
Andr\'{e} Weil. \textit{Sur certains groupes d'op\'{e}rateurs unitaires}.
Acta Math. \textbf{111} (1964), 143--211.

\bibitem{Weil 1965}
Andr\'{e} Weil. \textit{Sur la formule de Siegel dans la th\'eorie des groupes classiques}. Acta Math. \textbf{113} (1965), 1--87.

\bibitem{Weil 1974}
Andr\'{e} Weil. \textit{Basic Number Theory} (Reprint of the 1974 Edition). Springer-Verlag, Berlin, 1995.

\bibitem{Weil 1982}
Andr\'{e} Weil. \textit{Adeles and Algebraic Groups}. With appendices by M. Demazure and Takashi Ono. Progress in Mathematics, \textbf{23}. Birkh\"{a}user, Boston, 1982.

\bibitem{Xiong 2013}
Wei Xiong. \textit{A weak second term identity of the regularized Siegel-Weil formula for unitary groups}. Math. Z. \textbf{274} (2013), 1261--1297.

\bibitem{Yamana 2011}
Shunsuke Yamana. \textit{On the Siegel-Weil formula: the case of singular forms.} Compos. Math. \textbf{147} (2011), no. 4, 1003--1021.

\bibitem{Yamana 2013a}
Shunsuke Yamana. \textit{On the Siegel-Weil formula for quaternionic unitary groups.} Amer. J. Math. \textbf{135} (2013), no. 5, 1383--1432.

\bibitem{Yamana 2013b}
Shunsuke Yamana. \textit{The Siegel-Weil formula for unitary groups.} Pacific J. Math. \textbf{264} (2013), no. 1, 235--256.

\bibitem{Yamana 2014}
Shunsuke Yamana. \textit{L-functions and theta correspondence for classical groups}. Invent. Math. \textbf{196} (2014), 651--732.


\end{thebibliography}
\end{document}